\numberwithin{equation}{section}
\newcommand{\downmapsto}{\rotatebox[origin=c]{-90}{$\scriptstyle\mapsto$}\mkern2mu}
\newcommand{\Z}{\mathbb Z}
\newcommand{\R}{\mathbb R}
\newcommand{\C}{\mathbb C}
\newcommand{\Hom}{\text{Hom}}
\newcommand{\ad}{\text{\textnormal{ad}}}
\newtheorem{theorem}{Theorem}[section]
\newtheorem{lemma}[theorem]{Lemma}
\newtheorem{definition}[theorem]{Definition}
\newtheorem{proposition}[theorem]{Proposition}
\newtheorem{corollary}[theorem]{Corollary}
\newtheorem{remark}[theorem]{Remark}
\theoremstyle{definition}
\newtheorem{example}{Example}[section]
\newcommand{\checkxpos}[3][]{%
  \ifdim \zposx{#2}sp < 20000000sp%
    \mynote[#1]{#3}%
  \else%
    \note[#1]{#3}%
  \fi%
}
\newcommand{\mytodo}[2][]{%
  \zsaveposx{todo\the\todocount}%
  \checkxpos[#1]{todo\the\todocount}{#2}%
  \global\advance\todocount1\relax
}
\newcommand{\mynote}[2][]{{%
  \let\marginpar\marginnote
  \reversemarginpar
  \renewcommand{\baselinestretch}{0.8}%
  \todo[#1]{#2}}}
\newcommand{\note}[2][]{\renewcommand{\baselinestretch}{0.8}\todo[#1]{#2}}
\begin{document}
\title[On geometry of $2$-nondegenerate CR structures]{On geometry of $2$-nondegenerate CR structures of hypersurface type and flag structures on leaf spaces of Levi foliations}

\date{}
\thanks{I.\ Zelenko is supported by Simons Foundation Collaboration Grant for Mathematicians 524213.}

\author{David Sykes}
\address{David Sykes,
	Department of Mathematics and Statistics 
	Masaryk University,
	Kotl\'{a}\v{r}sk\'{a} 2,
	611 37 Brno,
	Czech Republic}\email{ sykes@math.muni.cz}
\urladdr{\url{http://www.math.muni.cz/~sykes}}


\author{Igor Zelenko}
\address{Igor Zelenko, Department of Mathematics
	Texas A\&M University
	College Station
	Texas, 77843
	USA}\email{ zelenko@math.tamu.edu}
\urladdr{\url{http://www.math.tamu.edu/~zelenko}}

\subjclass[2010]{32V05, 32V40, 53C10, 53C30}
\keywords{$2$-nondegenerate CR structures, absolute parallelism, homogeneous models, Tanaka  prolongation, curves in flag varieties}

\begin{abstract}
We construct canonical absolute parallelisms over real-analytic manifolds equipped with $2$-nondegenerate, hypersurface-type CR structures of arbitrary odd dimension not less than $7$ whose Levi kernel has constant rank belonging to a broad subclass of CR structures that we label as \emph{recoverable}. For this we develop a new approach based on a reduction to a special flag structure, called the \emph{dynamical Legendrian contact structure}, on the leaf space of the CR structure's associated Levi foliation. This extends the results of \cite{porter2017absolute} from the case of regular CR symbols constituting  a discrete set in the set of all CR symbols to the case of the arbitrary CR symbols for which the original CR structure can be uniquely recovered from its corresponding dynamical Legendrian contact structure. We find an explicit criterion for this recoverability. In particular, if the rank of the Levi kernel is 1 and the dimension of the CR manifold is not less than 7, then for each given signature of the reduced Levi form  in the space of all CR symbols (which depend on continuous parameters) there are no more than 2 symbols for which the aforementioned recoverability fails, and while the present method is applicable for all but those 2 cases, they can be treated separately by the method of \cite{porter2017absolute}.  Our method clarifies the relationship between the bigraded Tanaka prolongation of regular symbols developed in \cite{porter2017absolute} and their usual Tanaka prolongation, providing a geometric interpretation of conditions under which they are equal. Motivated by the search for homogeneous models with given nonregular symbols, we also describe a process of reduction from the original natural frame bundle, which is inevitable for structures with nonregular CR symbols. We demonstrate this reduction procedure for examples whose underlying manifolds have dimension $7$ and $9$. 
 
We  show  that, for any fixed rank $r>1$, in the set of all CR symbols associated with 2-nondegenerate, hypersurface-type CR manifolds of odd dimension greater than $4r+1$ with rank $r$ Levi kernel, the CR symbols not associated with any homogeneous model are generic, and, for $r=1$, the same result holds if the CR structure is pseudoconvex.
\end{abstract}

\maketitle\tableofcontents

\section{Introduction}\label{introduction}

A \emph{Cauchy--Riemann  structure of hypersurface type} on a $(2n+1)$-dimensional real manifold $M$ is an integrable,  totally real, complex rank $n$ distribution $H$ contained in the complexified tangent bundle $\C TM$, that is,
\begin{align}\label{levi form}
[H,H]\subset H\quad\mbox{ and } \quad H\cap\overline{H}=0.
\end{align}
Here $\C T_pM=T_pM\otimes \mathbb C$. 

Fixing some notation, if $E$ is a fiber bundle over a base space $B$ then we let $\Gamma(E)$ denote the space of smooth sections of  $E$, and, for $p\in B$, we let $E_p$ denote the fiber of $E$ over $p$. For a given CR structure $H$, a Hermitian form $\mathcal{L}$, called the \emph{Levi form} of the CR structure $H$ on $M$, is defined on fibers of $H$ by the formula
\begin{equation}
\label{Leviformdef}
\mathcal{L}(X_p,Y_p):=\frac{i}{2}\left[X,\overline{Y}\right]_p\mod H_p\oplus \overline{H}_p\quad\quad\forall\, X,Y\in \Gamma(H)\mbox{ and }p\in M,
\end{equation}
where $\mathcal{L}$ takes values in the quotient spaces $\C T_pM/\left(H_p\oplus \overline{H}_p\right)$. Note that the coset represented by $\frac{i}{2}[X,\overline{Y}]_p$ in  $\C T_pM/\left(H_p\oplus \overline{H}_p\right)$ depends only on the values of $X$ and $Y$ at $p$ rather than the values of $X$ and $Y$ in a neighborhood of $p$.
Letting $K$ denote the kernel of the Levi form,
a CR structure with $K_p=0$ is called \emph{Levi-nondegenerate} at $p$, and it is called degenerate at $p$ if $K_p\neq 0$.

The equivalence problem for Levi-nondegenerate CR structures of hypersurface type is classical. \'{E}. Cartan solved it for hypersurfaces in $\mathbb C^2$ \cite{cartanCR}, and then Tanaka \cite{tanakaCR} and Chern and Moser \cite{chernmoserCR} generalized the solution to hypersurfaces in $\mathbb{C}^{n+1}$ for $n\geq1$. This case is well understood in the general framework of parabolic geometries \cite{tanaka2, capshichl, capslovak}.

In the present paper we are interested in the construction of absolute parallelisms and finding upper bounds for the dimension of groups of local symmetries for CR structures that are uniformly (i.e., at every point) Levi-degenerate, but satisfy further nondegeneracy conditions called $2$-nondegeneracy. Assuming that $K$ is a distribution, that is, $\dim K_p$ is independent of $p\in M$,  $2$-nondegeneracy can be defined as follows:  We say that the CR structure $H$ on $M$ is \emph{2-nondegenerate at a point $p\in M$} if $K_p\neq0$ and, for any $Y\in \Gamma(K)$ with $Y_p\neq 0$, there exists $X\in \Gamma(H)$ such that $\left[X,\overline{Y}\right]_p\not\in K_p\oplus \overline{K}_p \oplus \overline {H}_p$.  The structure $H$ is  \emph{2-nondegenerate} if it is 2-nondegenerate at every point in $M$. 

Equivalently, if for $v\in K_p$ and $y\in\overline{H}_p/\overline{K}_p$, we take $V\in\Gamma(K)$ and $Y\in\Gamma(\overline{H})$ such that $V(p)=v$ and $ Y(p)\equiv y\mod\overline{K}$, and define a linear map
\begin{equation}\label{adv}
\begin{aligned}
 \ad_v&:\overline{H}_p/\overline{K}_p\to H_p/K_p\\
 &\hspace{7mm}y\hspace{6.5mm}\mapsto [V,Y]|_p \,\, \mathrm{mod}\,\, K_p\oplus \overline {H}_p,
\end{aligned}
\end{equation}
and similarly define a linear map $\ad_{v}:H_p/K_p\to \overline{H}_p/\overline{K}_p$ for $v\in \overline K_p$ (or simply take complex conjugates), then 
a Levi-degenerate CR structure is $2$-nondegenerate at $p$ if and only if there is no nonzero $v\in K_p$ (equivalently, no nonzero $v\in \overline{K}_p$) such that  $\ad_v=0$.

The generalization of this definition to arbitrary $k\geq 1$ via the \emph{Freeman sequence} under analogous constant rank assumptions was given in \cite{freeman1977local}. A more general definition,  without the assumption that $K$ is a distribution and for arbitrary $k\geq 1$ can be found in the monograph \cite[chapter XI]{BER99}.   Note that our  definition of $2$-nondegeneracy (the Freeman definition of $k$-nondegeneracy) and the definition in \cite{BER99} under the assumption that $K$ is a distribution (respectively, of constancy of ranks in the Freeman sequence) are equivalent (see \cite[Appendix]{kaupzaitsev}).

As a direct consequence of the Jacobi identity for every $v\in \overline K_p$ the antilinear operator $\overline {\ad_v}: H_p/K_p\to H_p/K_p$, defned by  $\overline {\ad_v} (x):= \overline {\ad _v (x)}$, is a self-adjoint antilinear operator with respect to the Hermitian form $\ell$ induced on $H_p/ K_p$ by the Levi form $\mathcal L$ on $H_p$, that is,
$$\ell(\overline{\ad _v} x, y)= \ell(\overline{\ad _v} y,x), \quad \forall x, y\in  H_p/ K_p.$$

If $H$ is a hypersurface-type CR structure, $n=\mathrm{rank}\, H$, and $r=\mathrm{rank}\, K$, then the assumption of 2-nondegeneracy implies that 
\begin{equation}
\label{estim1}
\dbinom{n-r+1}{2}\geq r,
\end{equation}
as the left side is exactly the dimension of the space of self-adjoint antilinear operators on $H/K$ (equal to the dimension of $(n-r+1)\times(n-r+1)$ symmetric matrices) and this must not be less than $\mathrm{rank}\, K$.  

This implies in particular that among hypersurface-type CR manifolds, the lowest dimension in which $2$-nondegeneracy can occur is $\dim M=5$ (i.e., with $n=2$ and $r=1$).  The structure of absolute parallelism in this case  was constructed only recently  and independently in the following three papers (preceded by the work \cite{ebenfelt} for a more restricted class of structures): Isaev and Zaitsev \cite{isaev},  Medori and Spiro \cite{medori}, and Merker and  Pocchiola \cite{pocchiola}.

The most general results on a canonical  absolute parallelism for $2$-nondegenerate, hypersurface-type CR structures of dimension higher than $5$  (and without an assumption of semisimplicity of the symmetry group of homogeneous models as in \cite{greg1, greg2} and implicitly in \cite{porter}) were obtained by \cite{porter2017absolute}, where under specific algebraic conditions a bigraded (i.e., $\mathbb Z\times \mathbb Z$-graded) analogue of Tanaka's prolongation procedure to construct a canonical absolute parallelism for these CR structures in arbitrary (odd) dimension with Levi kernel of arbitrary admissible dimension was developed. The starting point of these constructions  was the introduction of the notion of a bigraded Tanaka symbol of a CR structure at a point, playing the role of the Tanaka symbol in the standard Tanaka theory  \cite{tanaka1, zelenko2009tanaka}, which is not applicable here.  As with the usual Tanaka symbol, the bigraded Tanaka symbol contains the information about brackets of sections adapted to a filtration (determined by the CR structure) that remains after a passage from the filtered structure to a natural bigraded structure at a point (see Section \ref{Symbols of 2-nondegenerate CR structures section} for more detail), but in contrast to the standard theory the  bigraded symbol is not a Lie algebra in general, but rather a bigraded vector space.

The algebraic assumption of \cite{porter2017absolute} under which the bigraded Tanaka prolongation approach works is that the bigraded CR symbol is a Lie algebra. Such a symbol is called regular. 
Yet, for fixed $n=\mathrm{rank}\,H$ and $r=\mathrm{rank}\, K$ satisfying \eqref{estim1}, apart from the case in which the equality in \eqref{estim1} holds, that is, when $r=\tbinom{l+1}{2}$ and $n=\tfrac{l(l+3)}{2}$ for some positive integer $l$, which was treated in \cite{greg1, greg2}, the nonregular symbols constitute a generic subset in the set of all symbols (see Lemma \ref{gennonreglem} for the proof), and the goal of the present paper is to treat structures exhibiting nonregular CR symbols. For this, in the real-analytic category, we develop an alternative approach based on a (local) reduction of  the original CR structure to a sort of flag structure in the spirit of \cite{dz13}  (see Definition \ref{flagdef} below)  or, equivalently,  to families of Legendrian contact structures (following the terminology of \cite{doubrov2019homogeneous}, see Remark \ref{Legendre} below) on the space of leaves of the Levi foliation (or shortly the Levi leaf space) of the complexified manifold. We call these flag structures \emph{dynamical Legendrian contact structures}.  Specifically, the Levi leaf space is endowed with a contact distribution, and within the Lagrangian Grassmannian of each fiber of this contact distribution (defined with respect to its canonical conformal symplectic form) the original CR structure induces a pair of  submanifolds with complex dimension equal to the rank of the Levi kernel (see Section \ref{sec2} for more detail).  
In particular, if the Levi kernel is one-dimensional, then at each fiber of the contact distribution one has a pair of curves of Lagrangian subspaces.

In Section \ref{sec2}, we give criteria (Proposition \ref{prolK}) for when by passing from the CR structure to the corresponding dynamical Legendrian contact structure  we do not lose any information, that is, the  former can be uniquely recovered from the latter. In particular, we show that in the case of $\mathrm{rank}\, K=1$ the CR structure is recoverable if and only if, for a generator $v$ of $K$, the operator $\mathrm{ad}_v$ has rank greater than $1$ and, consequently, in this case every CR structure with nonregular CR symbol is recoverable.  Moreover, for fixed $\mathrm{rank}\,H >1$ and given signature of the reduced Levi form (i.e., the Hermitian form induced on $H/K$ from the Levi form),  among all regular CR symbols, if the reduced Levi form is sign-indefinite (Figure \ref{fig1}) then there are exactly two symbols for which the operator $\mathrm{ad}_v$ has rank $1$ and are consequently non-recoverable, whereas if the reduced Levi form is sign-definite then there is exactly one such symbol. The two non-recoverable symbols arising in the sign-indefinite case are distinguished by whether the antilinear operator $\overline{\ad_v}$  is  nilpotent  or not. The former is not possible in the sign-definite case.

\begin{figure}

\begin{align}\label{region diagram}
\parbox{\textwidth}{
\begin{center}
\begin{tikzpicture}
\path[shade] (0,0)  to[out=30,in=-90] (5,0)  to[out=90,in=-135] (5,3) to[out=60,in=0] (2.5,3) to[out=180,in=210]  (0,0) -- cycle;
\begin{scope}[thick,decoration={
    markings,
    mark=at position 0.5 with {\arrow{>}}}
    ] 
\draw[] (0,0)  to[out=30,in=-90] (5,0);
\draw[] (5,0)  to[out=90,in=-135] (5,3);
\draw[] (2.5,3)  to[out=180,in=210]  (0,0);
\draw[] (5,3)  to[out=60,in=0] (2.5,3);
\end{scope}
\draw 
    (-2.5,1) node{$\left.\parbox{3cm}{Finitely many re-\\gular and recoverable symbols}\right\}$};
\draw 
    (6.8,1) node{$\left\{\parbox{2.6cm}{At most 2 non-recoverable symbols}\right.$};
\draw[thick]
    (4.3,2.9) circle(2pt);
\draw[thick]
    (2.8,2.4) circle(2pt);
\draw[fill]
    (1,1.6) circle(1.5pt);
\draw[fill]
    (1.7,1.1) circle(1.5pt);
\draw[fill]
    (2.9,0.4) circle(1.5pt);
\draw[fill]
    (3.7,0.2) circle(1.5pt);
\draw[fill]
    (2.3,0.7) circle(0.5pt);
\draw[fill]
    (2.2,0.75) circle(0.5pt);
\draw[fill]
    (2.4,0.65) circle(0.5pt);
\begin{scope}[decoration={
    markings,
    mark=at position 0.999 with {\arrow{>}}}
    ] 
\draw[postaction={thick,decorate}] (-0.85,1)  to[out=0,in=-90]  (1,1.55);
\draw[postaction={thick,decorate}] (-0.85,1)  to[out=0,in=-160]  (1.66,1.07) ;
\draw[postaction={thick,decorate}] (-0.85,1)  to[out=0,in=190]  (2.85,0.4);
\draw[postaction={thick,decorate}] (-0.85,1)  to[out=0,in=200] (3.64,0.18) ;
\draw[postaction={thick,decorate}] (5.2,1)  to[out=180,in=-90] (4.3,2.8)  ;
\draw[postaction={thick,decorate}] (5.2,1)  to[out=180,in=-10]  (2.9,2.4)  ;
\end{scope}
    \end{tikzpicture}
    \end{center}
    }
\end{align}
\caption{Moduli space of CR symbols for $\mathrm{rank}\, K=1$, fixed $\dim M>5$, and fixed signature of $\mathcal{L}$}
    \label{fig1}
\end{figure}
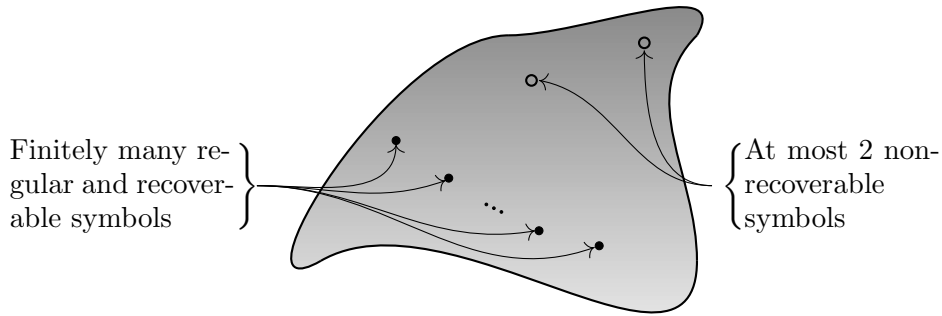

Inspired by the theory developed in \cite{doubrov2012geometry} for geometry of a single submanifold in flag varieties, we apply a description of the local differential geometry of pairs of submanifolds in Lagrangian Grassmannians to assign to our original structure a sort of Tanaka structure, in general of nonconstant type and with the symbol at every point different from the original CR symbol of \cite{porter2017absolute} (therefore called the \emph{modified  symbol}), for which  both the Tanaka-like prolongation procedure for the construction of canonical moving frames and the upper bounds for the pseudogroup of local symmetries can be established (see Theorem \ref{maintheorm}).  A nonstandard aspect in this theorem is that the modified symbol of our structure is not necessarily a Lie algebra and it varies from point to point; so to prove it (see Section \ref{Absolute parallelisms}) we needed to make certain modifications to the standard Tanaka prolongation in the spirit of \cite{zelenko2009tanaka}, obtaining a microlocal version of the standard construction.

In Section \ref{CR structures with constant modified symbols},  we prove that structures with nonregular CR symbols cannot have constant modified symbols (Theorem \ref{constant mod symbols implies regular}), a notion introduced in Section \ref{bundles on curves in Lagrangian Grassmannians}, which motivates the reduction procedure of Section \ref{reduction_and_nongenericity_section}. In particular, Section \ref{reduction_and_nongenericity_section} introduces another theorem on absolute parallelisms (Theorem \ref{maintheorconst}), which gives more precise upper bounds for the dimension of algebras of infinitesimal symmetries than Theorem \ref{maintheorm} does in certain cases. As a consequence, we get that if the CR symbol $\mathfrak g^0$ is regular and recoverable then its usual Tanaka prolongation and the bigraded Tanaka prolongation defined in \cite[section 3]{porter2017absolute} coincide.

Although to every regular CR symbol one can assign a homogeneous model, the existence of homogeneous models exhibiting a given nonregular CR symbol turned out to be a subtle question.  In Section \ref{nongenericity section}, we  show  (Theorem \ref{gen_nonreg}) that for any fixed rank $r>1$, in the set of all CR symbols associated with 2-nondegenerate, hypersurface-type CR manifolds of odd dimension greater than $4r+1$ with rank $r$ Levi kernel, the CR symbols not associated with any homogeneous model are generic, and for $r=1$ the same results holds if the reduced Levi form is sign-definite, that is, when the CR structure is pseudoconvex.

Despite these non-existence results for generic symbols,  such homogeneous models do exist for specific nonregular symbols. In  Section \ref{examples} and Section \ref{Examples of the canonical parallelism construction},  we demonstrate our constructions with four examples. All three examples of Section  \ref{examples} are actually homogeneous CR manifolds exhibiting the maximally symmetric structures described in Theorem \ref{maintheorconst}, and they illustrate novel applications of this paper's main results. 
Example \ref{7d nonregular} has a nonregular CR symbol, and, as such, the method developed in this paper is the only known way to build an absolute parallelism over such CR manifolds such that the parallelism's automorphisms are all induced by its underlying CR manifold's symmetries. The other two examples in Section \ref{examples}  have the same regular CR symbol in the sense of \cite[Definition 2.2]{porter2017absolute} but different modified CR symbols, and, as such, while the construction of an absolute parallelism given in \cite{porter2017absolute} is the same for both examples, the construction given here varies, resulting in parallelisms of different dimensions for each example and whose dimension matches that of the underlying CR manifold's symmetry group. In Section \ref{Examples of the canonical parallelism construction} we analyze a class of  $2$-nondegenerate CR structures of hypersurface type with non-regular symbol on $11$-dimensional manifolds, that are in general nonhomogeneous, and for this class we implement our prolongation procedure for construction of absolute parallelisms, finding the structures' fundamental set of invariants and classifying all homogeneous models. 

The classification of  CR symbols that admit homogeneous models is a nontrivial problem even for small dimensions and  is treated fully in all dimensions for regular symbols in \cite{porter2017absolute} and partially for nonregular symbols in the paper \cite{SZ2}, where the full classification is given in dimension $7$ and a general method for construction of such symbols in arbitrary dimension is developed. 
The natural question arises:  does nonexistence of homogeneous models with a given symbol imply nonexistence of any structure having this symbol at every point? This question  turns to be  subtle even in small dimensions, as it leads to the study of certain rather nontrivial overdetermined systems of PDEs, and it is still an open question. Nevertheless, the construction of absolute parallelisms in the present paper also gives a natural framework for answering this question reframing it as one on existence of solutions of the system of overdetermined PDE's given by the Bianchi identity corresponding to the canonical absolute parallelism, which can be studied with Cartan-K\"ahler theory. This is still a work in progress, and, in the light of this yet unsolved issue, in  Appendix \ref{Non-constant symbol structures}, we also sketch how our absolute parallelism construction can be extended to structures with non-constant CR symbol.

\section {The Levi leaf space and its flag structure}
\label{sec2}
From now on we assume that $K$ is a distribution of rank $r$, that is, $\dim K_p=r$ for all $p\in M$. Note that directly from the definition \eqref{Leviformdef} it follows that $K$ is an involutive distribution. In this section we introduce an important geometric object, the space of leaves of the foliation by integral submanifolds of the distribution $K\oplus\overline{K}$, called the Levi leaf space.   Since $K$ and $\overline{K}$ are subbundles in the complexified tangent bundle $\C TM$, to define such leaves we must ``complexify" the manifold $M$, at least locally. For this to work we have to assume that all considered objects, namely the manifold $M$ and the CR-structure given by $H$, are real-analytic. Under the real-analytic assumption, locally (i.e., in some neighborhood of any point in $M$) we can consider a complex manifold $\mathbb C M$, a \emph{complexification of $M$}, by extending the transition maps between charts, which are real-analytic by definition, to analytic functions of complex variables. We can then extend locally the real-analytic distributions $H$ and $K$ to the holomorphic  distributions on $\C M$ which, for simplicity, will be denoted by the same letters $H$ and $K$. The conjugation in local charts of $\C M$ defines an involution $\tau$ on $\C M$ such that $M$ is the set of its fixed points. Using this involution we can extend $\overline{H}$ and $\overline{K}$ by the formulas
\begin{equation}
\label{antihol} 
\overline{H}:=\tau_*(H) \quad\mbox{ and }\quad \overline{K}:=\tau_*(K),
\end{equation}
so $\overline H$ and $\overline K$ are antiholomorphic extensions of the corresponding distributions from $M$ to $\C M$.

Furthermore,  distributions $H\oplus\overline H$ and $K\oplus \overline K$ in $\C M$ are holomorphic as they are holomorphic extensions of the  real parts of the corresponding distributions on $M$. Also note that the constructed extended distribution $K$ on $\C M$ is involutive as is $K\oplus \overline K$. So $\C M$ is foliated by the maximal integral (complex) submanifolds of $K\oplus \overline K$. This foliation is called the \emph{Levi foliation} and will be denoted by $\mathrm{Fol}(K\oplus\overline K)$ and, after an appropriate shrinking of $\C M$, which always can be done as our considerations are local, we can assume that the space of leaves of this foliation
$$\mathcal N=\C M / \mathrm{Fol}(K\oplus\overline K)$$
has a natural structure of a (complex) manifold. The manifold $\mathcal N$ is called the \emph{Levi leaf space}  of the original CR structure.

Let $\pi: \C M\rightarrow \mathcal{N}$  be the natural projection, sending a point $p\in \C M$ to the leaf  of the Levi foliation passing through $p$. Since, by construction, for every vector field $X\in \Gamma(K\oplus\overline K)$, we have 
\begin{equation}
\label{Cauchy}
[X, H\oplus \overline H]\subset \Gamma(H\oplus\overline H).
\end{equation}
The set
\begin{equation}
\label{contactD}
\mathcal{D}:=
\pi_* \left(H\oplus\overline{H}\right) 
\end{equation}
is a well defined (complex) corank 1 distribution on $\mathcal N$.

Moreover, since $X\in \Gamma(H\oplus\overline H)$ satisfies \eqref{Cauchy} if and only if  $X\in \Gamma(K\oplus\overline K)$,  the distribution $\mathcal D$ is contact, that is, if $\alpha$ is a $1$-form on $\mathcal N$  annihilating $\mathcal D$, then $d\alpha|_{D_\gamma}$ is nondegenrate at every point $\gamma\in \mathcal N$. The form  
\begin{equation}
\label{sigma}
\omega_\gamma: =d\alpha|_{D_\gamma}
\end{equation}
is, up to a multiplication by a nonzero constant, a well defined symplectic form on $\mathcal D_\gamma$, that is, it  defines a \emph{canonical conformal symplectic structure} on $\mathcal D_\gamma$.

For every $\gamma\in \mathcal N$  and every $p\in \pi^{-1}(\gamma)$,  considered as the leaf of the foliation $\mathrm{Fol}(K\oplus\overline{K})$ in $\mathbb C M$, set
$$\widehat J_\gamma^-(p):=\pi_* H_p, \quad \ \widehat J_\gamma^+(p):=\pi_* \overline H_p$$
From the involutivity of the distributions $H$ and $\overline H$, it follows that  
$J_\gamma^-(p)$ and  $J_\gamma^+(p)$ are Lagrangian subspaces of $\mathcal D_\gamma$ with respect to the symplectic form $\omega_\gamma$, that is, they are elements of the Lagrangian Grassmannian $\mathrm{LG}\bigl(\mathcal{D}_\gamma\bigr)$.

Finally, the distributions $K$ and $\overline{K}$ are involutive and define foliations $\mathrm{Fol}(K)$ and $\mathrm{Fol}(\overline{K})$, respectively. Obviously the leaves of $\mathrm{Fol}(K)$ (and of $\mathrm{Fol}(\overline K)$) foliate the leaves of $\mathrm{Fol}(K\oplus \overline K)$. Since $[K, H]\subset H$, the space $J_\gamma^-(p)$ is the same for every $p$ in the same leaf of $\mathrm{Fol}(K)$ in $\pi^{-1}(\gamma)$ for $\gamma\in \mathcal N$. Hence, we can define the map 
 \begin{equation}
 \label{Jgamma1} 
 J_\gamma^-:\pi^{-1}(\gamma)/\mathrm{Fol}(K)\rightarrow \mathrm{LG}\bigl(\mathcal D_\gamma\bigr)
 \end{equation}
 such that,  given  $ p\in \pi^{-1}(\gamma)/\mathrm{Fol}(K)$, we have $\widehat{J}_\gamma^{-}(p):=\widehat J_\gamma^{-} (\hat p)$ for some (and therefore any) $\widehat p\in \C M$ lying on the leaf containing $p$ of the foliation $\mathrm{Fol} (K)$.
 
 \begin{remark}
 \label{tangentid}
 Recall that the tangent space $T_\Lambda \mathrm{LG}(\mathcal D_\gamma)$ to the Lagrangian Grassmannian $\mathrm{LG}(\mathcal D_\gamma)$ at the point $\Lambda$ is identified with an appropriate subspace in $\mathrm {Hom}(\Lambda, \mathcal D_\gamma/\Lambda)$. Also, for every $p\in \pi^{-1}(\gamma)$, the map $(\pi_*)_p$ identifies $H_p/K_p$ with $J_\gamma^{-}(p)$ and $\overline H_p/\overline K_p$ with $J_\gamma^{+}(p)$. Using these identifications and basic properties of Lie derivatives, for every $v\in K_p$ (or $v\in \overline K_p$)  we can identify the operator  $\mathrm{ad}_v$ defined by \eqref{adv} with the operator $\left(J_\gamma^+\right)_*v $ (or respectively $\left(J_\gamma^-\right)_*v $).
 \end{remark}

 By the identification of the previous remark,  the  $2$-nondegeneracy condition implies that, after an appropriate  shrinking of $\C M$, the map $J_\gamma^-$ from \eqref{Jgamma1} is a well defined injective immersion, that is, its image is a submanifold of $LG(\mathcal D_\gamma)$ of complex dimension equal to $\mathrm{rank}\, K$. Similarly, the map 
  \begin{equation}
 \label{Jgamma2}
 J_\gamma^+:\pi^{-1}(\gamma)/\mathrm{Fol}(\overline{K})\rightarrow \mathrm{LG}\bigl(\mathcal D_\gamma\bigr).
 \end{equation}
 is a well defined injective immersion and its image is a submanifold of $\mathrm{LG}(\mathcal D_\gamma)$ of complex dimension equal to $\mathrm{rank}\, K$ as well. In the sequel, by $J_\gamma^-$ and $J_\gamma^+$  we will denote the images of the maps in \eqref{Jgamma1} and \eqref{Jgamma2}, respectively, rather than the maps themselves.

\begin{remark}
\label{Legendre} Note that, by construction, if $\Lambda^-\in J_\gamma^-$ and  $\Lambda^+\in J_\gamma^+$ then $\Lambda^-$ and $\Lambda^+$ are transversal as subspaces of $\mathcal D_\gamma$, that is, 
$\mathcal D_\gamma =\Lambda^-\oplus\Lambda^+$.
Recall (\cite{doubrov2019homogeneous}) that  a Legendrian contact structure on an odd dimensional distribution is a contact distribution $\Delta$ together with the fixed splitting of each fiber $\Delta_x$ by a pair  of transversal  Lagrangian subspaces smoothly depending on $x$. Any section $s$ of the bundle $\pi:\C M\rightarrow \mathcal N$, defines the Legendrian contact structure on $\mathcal N$ given by the distribution $\mathcal D$ and the splitting of $\mathcal D_\gamma$, given by $J_\gamma^-\bigl(s(\gamma) \bigr)$ and  $J_\gamma^+\bigl(s(\gamma) \bigr)$.
\end{remark}

Motivated by the previous constructions and Remark \ref{Legendre} we introduce the following definition.
\begin{definition}
\label{dyndef}
A dynamical Legendrian contact structure (with involution) on an odd-dimensional complex manifold $\mathcal M$  is a contact distribution 
$\Delta$ together with an involution $\sigma$  on $\mathcal M$ and  a fixed pair of $k$-dimensional submanifods $ \Lambda_x^-$ and $ \Lambda_x^+$ in the Lagrangian Grassmannian $\mathrm {LG}(\Delta_x)$ of each fiber such that the following conditions hold:
\begin{enumerate}
\item the submanifolds $ \Lambda_x^-$ and $ \Lambda_x^+$ are smoothly dependent on $x$ and any point of $ \Lambda_x^-$, considered as a Lagrangian subspace of $\Delta_x$, is transversal to any point  of $ \Lambda _x^-$, considered as a Lagrangian subspace of $\Delta_x$. 
\item $\Lambda_x^-=\sigma_*\Lambda_\sigma(x)^+$.
\end{enumerate}
Such dynamical Legendrian contact structures with involution will be denoted just by the triple  $(\Delta, \Lambda^-, \Lambda^+)$ when the involution is determined by context or by the triple $(\Delta, \Lambda^-,\tau)$.
\end{definition}

\begin{definition}
\label{flagdef}
Letting $H$ be a 2-nondegenerate hypersurface-type CR structure on the manifold $M$, the dynamical Legendrian contact structure $\{\mathcal D, J^{-}, J^{+}\}$ on the Levi leaf space $\mathcal{N}$, with $\mathcal D$ , $J ^-$, $J^+$, and involution $\tau$ defined by \eqref{contactD}, \eqref{Jgamma1}, \eqref{Jgamma2}, and the sentence before \eqref{antihol}, respectively, is called \emph{the dynamical Legendrian contact structure associated with the germ (at some point in $M$) of the CR structure $H$.} 
\end{definition}
The reason that, in Definition \ref{flagdef}, we say that dynamical Legendrian contact structures are associated with germs of CR structures rather than with the whole CR manifold, is that the construction of $\{\mathcal D, J^{-}, J^{+}\}$ is well defined only after an appropriate shrinking of $M$ (i.e., after possibly replacing $M$ by a neighborhood of any given point in $M$).
\begin{remark}
\label{recover_rem}
If $\{\mathcal D, J^{-}, J^{+}\}$ is the dynamical Legendrian contact structure associated with the germ of the 2-nondegenerate CR structure $H$ of hypersurface type on the manifold $M$ then $\C M$ is locally canonically diffeomorphic to the bundle $\Pi: J^-\times J^+\to \mathcal{N}$ with the fiber over the point $\gamma\in\mathcal{N}$ equal to $J_\gamma^{-}\times J_\gamma^{+}$, where here $J^-\times J^+:=\displaystyle{\bigcup_{\gamma\in\mathcal{N}}}\left(J^{-}_\gamma\times J^{+}_\gamma\right)$. This canonical diffeomorphism, denoted by $F$, is given by 
\begin{equation}
\label{canF}
F(p):=(J_\gamma^-(p), J_\gamma^+(p)).
\end{equation}
Moreover, each fiber $J_\gamma^{-}\times J_\gamma^{+}$ is foliated by two foliations with leaves $\{J_\gamma^{-}\times J_\gamma^{+}(p)\}_{p\in \pi^{-1} (\gamma)}$ and $\{J_\gamma^{-}(p)\times J_\gamma^{+}\}_{p\in \pi^{-1} (\gamma)}$, respectively.  Denote by $V_1$ and $V_2$ the distribution tangent to this foliation. Then it is clear that \begin{equation}
\label{recov_eq}
\begin{split}
~& V_1=F_*K, \quad V_2=F_*\overline K,\\
~&F_*(H_p+\overline K_p)=\left\{y\in T_{F(p)} (J^{-}\times J^{+})\,\left|\, \Pi_*y \in J_{\pi(p)}^-(p)\right.\right\},\quad\mbox{ and}\\
~&F_*(\overline H_p+K_p)=\left\{y\in T_{F(p)} (J^{-}\times J^{+})\,\left|\, \Pi_*y \in J_{\pi(p)}^+(p)\right.\right\}.
\end{split}
\end{equation}
 Finally the distribution $H$ is an involutive subdistribution of $H+\overline K$.
 \end{remark}

The main idea of the present paper is to study the local equivalence problem for the dynamical Legendrian contact structures associated with CR structures instead of the CR structures themselves. Before doing this, we have to understand the conditions under which passing from the CR structure to the corresponding dynamical Legendrian contact structure does not lose any information, that is, under which the former can be uniquely reconstructed from the latter. Such CR structures will be called \emph{recoverable}.

To describe the conditions for recoverability, recall (\cite{stern}) that, given two vector spaces $V$ and $W$ and a subpsace $Z$ in $\mathrm{Hom}(V, W)$, the \emph{anti-symmetrization}  (Spencer) operator $\partial: \mathrm{Hom}(V, Z) \rightarrow \mathrm{Hom}(V\wedge V, W)$ is defined by
\begin{equation}
\label{Spencer}
\partial (f)(v_1, v_2)=f(v_1)v_2-f(v_2)v_1, \quad v_1, v_2\in V, \,\,f\in \mathrm{Hom}(V, Z).
\end{equation}
The kernel of the operator $\partial$ is called the \emph{first prolongation} of the subspace $Z\subset \mathrm{Hom}(V, W)$ and is denoted by $Z_{(1)}$.

Now for $v\in \overline K_p$, take $\ad_{v}:H_p/K_p\to \overline{H}_p/\overline{K}_p$ to be as in the sentence after \eqref{adv}. From the assumption
of $2$-nondegeneracy it follows that the map $v\mapsto \ad_v$ identifies $\overline K_p$ with  a subspace in $\mathrm{Hom}(H_p/K_p, \overline{H}_p/\overline{K}_p)$, which is denoted by $\ad \, \overline K_p$.

\begin{proposition}
\label{prolK}
A $2$-nondegenerate hypersurface-type CR structure $H$ is recoverable in a neighborhood of a point $p$  if and only if the first prolongation $\bigl(\ad \overline K_p\bigr)_{(1)}$ of the space $\ad \overline K_p$ vanishes.
\end{proposition}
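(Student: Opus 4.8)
The plan is to translate recoverability into a distribution-completion problem on $\C M$ and to measure the ambiguity of the completion with the Spencer operator $\partial$ of \eqref{Spencer}. First I would use the canonical diffeomorphism $F$ of Remark \ref{recover_rem} to transfer to $\C M$ the data carried by the dynamical Legendrian contact structure with involution: the involution $\tau$, the distributions $K=F_*^{-1}V_1$ and $\overline K=F_*^{-1}V_2$, and, via \eqref{recov_eq}, the distribution $\mathcal E^-:=H+\overline K$ together with $\mathcal E^+:=\tau_*\mathcal E^-=\overline H+K$. All of these are determined by $\{\mathcal D,J^-,J^+,\tau\}$ alone, whereas the CR structure $H$ enters only through the requirement that it be a rank-$n$ involutive subbundle of $\mathcal E^-$ transversal to $\overline K$. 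Conversely, since any such $H'$ is the graph of a bundle map into $\overline K\subset\ker\pi_*$, one has $\pi_*H'_p=\pi_*H_p$, so $H'$ together with $\overline{H'}:=\tau_*H'$ defines a $2$-nondegenerate hypersurface-type CR structure (one checks $H'\cap\overline{H'}=0$ and that $K$ is again its Levi kernel) with the \emph{same} Levi leaf space, contact distribution $\mathcal D$, and Lagrangian submanifolds $J^\pm_\gamma$. Hence $H$ is recoverable near $p$ if and only if it is the unique rank-$n$ involutive complement of $\overline K$ in $\mathcal E^-$ on a neighborhood of $p$.

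Next I would parametrize the competing complements as graphs $H'=\{Y+\phi(Y):Y\in\Gamma(H)\}$ of bundle maps $\phi\colon H\to\overline K$ and compute $[Y_1+\phi(Y_1),Y_2+\phi(Y_2)]$ modulo $\mathcal E^-$. Every term containing a derivative of $\phi$ lands in $\overline K\subset\mathcal E^-$, so, using \eqref{Cauchy} and the definition \eqref{adv} of $\ad$, the bracket is congruent modulo $\mathcal E^-$ to $\ad_{\phi(Y_1)}(Y_2)-\ad_{\phi(Y_2)}(Y_1)\in\overline H_p/\overline K_p$. The condition $[H',H']\subset\mathcal E^-$ thus forces this to vanish for all $Y_1,Y_2\in H_p$; feeding in $Y_2\in K_p$ gives $\ad_{\phi(Y_2)}=0$, hence $\phi(K_p)=0$ by $2$-nondegeneracy, so $\phi$ descends to $\overline\phi\colon H_p/K_p\to\overline K_p$; composing with the isomorphism $v\mapsto\ad_v$, the identity says exactly that $\ad\circ\overline\phi\in\ker\partial=\bigl(\ad\overline K_p\bigr)_{(1)}$.

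The ``if'' direction is then immediate: if $\bigl(\ad\overline K_p\bigr)_{(1)}=0$, then because $\overline K$, $H/K$, and $\overline H/\overline K$ have locally constant rank and $\partial$ varies real-analytically, $\partial$ remains injective on a neighborhood of $p$, so every involutive complement $H'$ near $p$ has $\ad\circ\overline\phi\equiv0$, whence $\phi\equiv0$ and $H'=H$.

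The ``only if'' direction is the step I expect to be the main obstacle: one must show that $\bigl(\ad\overline K_p\bigr)_{(1)}\neq0$ forces the existence of a genuinely different involutive complement near $p$. The constraints found above only guarantee $[H',H']\subset\mathcal E^-$; the sharper requirement $[H',H']\subset H'$ imposes a further first-order system that expresses the derivatives of $\phi$ through its pointwise value. The plan is to use the real-analytic hypothesis essentially: restricting $\phi$ to the subvariety defined by $\phi(K)=0$ and $\ad\circ\overline\phi\in(\ad\overline K)_{(1)}$, show with the Jacobi identity that the residual equations form a completely integrable system (after a finite Cartan--Kuranishi prolongation if necessary) whose initial value at $p$ can be prescribed arbitrarily in $\bigl(\ad\overline K_p\bigr)_{(1)}$, and then invoke the Frobenius or Cartan--K\"ahler theorem to produce a real-analytic $\phi$ with $\phi_p\neq0$, i.e.\ an involutive complement $H'\neq H$, defeating recoverability. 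The crux is the compatibility of this system: one must rule out that the prolongation constraint together with the sharpened bracket condition over-determines $\phi$. That $\phi\equiv0$ is always a solution, and that the identities involved descend from the same Jacobi identity that makes $\overline{\ad_v}$ self-adjoint, is what makes this the expected outcome.
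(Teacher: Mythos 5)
Your reduction of recoverability to uniqueness of a rank-$n$ involutive complement of $\overline K$ inside $H+\overline K$, the parametrization of competitors as graphs of bundle maps into $\overline K$, and the bracket computation identifying the pointwise obstruction with the Spencer operator are exactly the paper's proof; your derivation of $\phi(K_p)=0$ from $2$-nondegeneracy (instead of imposing $K\subset H'$ outright, as the paper does) is a harmless variant, and your ``if'' direction coincides with the paper's conclusion that vanishing of $\bigl(\ad \overline K_p\bigr)_{(1)}$ forces $\phi\equiv 0$, hence $H'=H$.

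The divergence is in the ``only if'' half. The paper invokes no Cartan--Kuranishi or Cartan--K\"ahler machinery there: it reads both directions off the same computation, namely that involutivity of any competitor graph forces its defining map to take values in $\bigl(\ad \overline K_p\bigr)_{(1)}$ at each (generic) point, and concludes that vanishing of the first prolongation at generic points is equivalent to $H'=H$. You are right to notice that exhibiting an actual involutive $H'\neq H$ from a nonzero prolongation involves more than this pointwise condition, since involutivity also constrains the first derivatives of $\phi$; but your proposed completion (restrict to the locus cut out by the prolongation condition, establish formal integrability, apply Frobenius or Cartan--K\"ahler in the analytic category) is left as a plan, and the compatibility you yourself call ``the crux'' is never verified. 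So, measured against the paper, everything you actually prove is the paper's own argument, while the converse direction in the strengthened form you aim for is not established in your write-up; to match the paper you can simply conclude as it does from the pointwise computation, and if you insist on the constructive version you must still carry out the integrability analysis you only sketch.
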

\begin{proof}
From Remark \ref{recover_rem} it follows that 
a CR structure  $H$ is recoverable 
if and only if
$H$ is the unique involutive subdistribution of $H+\overline K$ of rank $n=\mathrm{rank}\, H$, transversal to $\overline K$  and containing $K$, because the reconstruction can be obtained using formulas \eqref{recov_eq}.  Let $H^\prime$ be another complex complex rank $n$  involutive subdistrubution  of  $H+\overline K$ that is transversal  to $\overline K$ and containing $K$. Fix the point $p\in \C M$. For each $y\in \C M$, there exists a linear map $f_y:H_y/K_y\to \overline K_y$ such that $H^{\prime}$ is the graph of $f_y$ characterized by
\[
H^{\prime}_y=\left\{v+f_y(v)\,\left|\, v\in H_y\right.\right\}.
\]

In the sequel by $f$ we mean the field of linear maps $\{f_y\}_{y\in \C M}$.
For two vectors $y_1, y_2 \in H_p$, let $Y_1, Y_2 \in  \Gamma\left(H\right)$ be such that $Y_i(p)=y_i$ for $i\in\{1,2\}$, and let $Y_1^{\prime}$ and $Y_2^{\prime}$ be the associated vector fields in $\Gamma\left(H^{\prime}\right)$ such that 
\[
Y_i^{\prime}=Y_i+f\left(Y_i\right).
\]
We have
\begin{align}
\left[Y_1^{\prime},Y_2^{\prime}\right]_p&=\left[Y_1,Y_2\right]_p+\left[Y_1,f\left(Y_2\right)\right]_p+\left[f\left(Y_1\right),Y_2\right]+\left[f\left(Y_1\right),f\left(Y_2\right)\right]_p\label{4terms}\\
&\equiv \left[Y_1,f\left(Y_2\right)\right]_p+\left[f\left(Y_1\right),Y_2\right]_p\pmod{H_p+\overline K_p} \label{2terms}
\end{align}
because $\left[Y_1,Y_2\right]$ and $\left[f\left(Y_1\right),f\left(Y_2\right)\right]$
both belong to $H_p+\overline K_p$ due to the involutivity of $H$ and $\overline K$. Since $H^{\prime}$ is involutive,  it follows that the left side of \eqref{4terms} belongs to $H^\prime_p$. Hence, using \eqref{adv}, \eqref{2terms} can be written as
\begin{equation}\label{uniqueness of reconstruction lemma formula 1}
\mathrm{ad}_{f_p(y_1)} y_2-\mathrm{ad}_{f_p(y_2)} y_1=0.
\end{equation}
Since $y_1$ and $y_2$ are arbitrary elements of $H_p/K_p$, by \eqref{Spencer} and \eqref{uniqueness of reconstruction lemma formula 1}, we get that $f_p\in \bigl(\ad \overline K_p\bigr)_{(1)}$, so the vanishing of $\bigl(\ad \overline K_p\bigr)_{(1)}$ for generic $p$ is equivalent to $H^\prime=H$.
\end{proof}
Based on this proposition we obtain the following sufficient condition for the recoverability of the CR structure.

\begin{proposition}\label{uniqueness of holomorphic bundle} If for a generic point $p$ there is no  nonzero subspace  $L$ of $\overline K_p$ satisfying,
\begin{equation}
\label{inter_cond}
\dim\left(\bigcap_{v\in L} \ker \ad_v\right)\geq \mathrm{rank}\, H-
\mathrm{rank}\, K
-\dim  L,
\end{equation}
then the original CR structure given by $H$ is recoverable. 
\end{proposition}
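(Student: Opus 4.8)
The plan is to reduce everything to Proposition~\ref{prolK}: since that proposition identifies recoverability in a neighborhood of $p$ with the vanishing of $\bigl(\ad\overline K_p\bigr)_{(1)}$ for generic $p$, it suffices to prove the contrapositive of a pointwise statement, namely that if $\bigl(\ad\overline K_p\bigr)_{(1)}\neq0$ at a point $p$, then there is a nonzero subspace $L\subseteq\overline K_p$ satisfying \eqref{inter_cond} at $p$. Throughout I would write $V:=H_p/K_p$ and $W:=\overline H_p/\overline K_p$, both of dimension $\mathrm{rank}\,H-\mathrm{rank}\,K$, and use the identification of $\overline K_p$ with $\ad\overline K_p\subseteq\mathrm{Hom}(V,W)$ given by $v\mapsto\ad_v$.

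First I would fix a nonzero $f\in\bigl(\ad\overline K_p\bigr)_{(1)}\subseteq\mathrm{Hom}\bigl(V,\ad\overline K_p\bigr)$ and unwind the prolongation condition: by \eqref{Spencer}, $\partial f=0$ says exactly that the $W$-valued bilinear map $B(v_1,v_2):=f(v_1)v_2$ on $V$ is symmetric, i.e. $f(v_1)v_2=f(v_2)v_1$ for all $v_1,v_2\in V$. Then I would take $L\subseteq\overline K_p$ to be the subspace corresponding under $v\mapsto\ad_v$ to the image $f(V)\subseteq\ad\overline K_p$; it is nonzero because $f\neq0$, and $\dim L=\dim f(V)$.

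The key step is to identify the left-hand side of \eqref{inter_cond} for this $L$ with $\ker f$. Indeed, for $x\in V$ the condition $x\in\bigcap_{v\in L}\ker\ad_v$ is equivalent to $f(u)(x)=0$ for every $u\in V$ (since $\{\ad_v:v\in L\}=f(V)$), which by the symmetry of $B$ is equivalent to $f(x)(u)=0$ for every $u\in V$, i.e. to $f(x)=0$. Hence $\bigcap_{v\in L}\ker\ad_v=\ker f$, so by rank--nullity $\dim\bigcap_{v\in L}\ker\ad_v=\dim V-\dim f(V)=\mathrm{rank}\,H-\mathrm{rank}\,K-\dim L$; thus \eqref{inter_cond} holds for $L$, with equality. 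Applying this at each generic $p$ and invoking Proposition~\ref{prolK} would finish the proof.

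I expect no serious obstacle here: the argument is essentially a one-line observation once one chooses $L=f(V)$, because it is precisely the symmetry of the Spencer obstruction that collapses the joint kernel $\bigcap_{v\in L}\ker\ad_v$ onto $\ker f$ and hands over the dimension count. The only things needing a little care are keeping the quotient identifications $H_p/K_p$ and $\overline H_p/\overline K_p$ straight and matching the ``generic $p$'' quantifier to the exact formulation of Proposition~\ref{prolK}.
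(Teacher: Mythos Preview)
Your proposal is correct and follows essentially the same approach as the paper: both arguments reduce to Proposition~\ref{prolK}, take the contrapositive at a generic point, choose $L$ to be the image of a nonzero $f\in\bigl(\ad\overline K_p\bigr)_{(1)}$, and use the Spencer symmetry $f(v_1)v_2=f(v_2)v_1$ to compare $\ker f$ with $\bigcap_{v\in L}\ker\ad_v$. The only difference is that the paper records only the inclusion $\ker f\subset\bigcap_{v\in L}\ker\ad_v$ (which already yields~\eqref{inter_cond}), whereas you observe that the symmetry gives the reverse inclusion too, so in fact equality holds in~\eqref{inter_cond} for this particular $L$; this is a slight sharpening but not a different method.
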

\begin{proof}
By Proposition \ref{prolK} it is sufficient to prove that if \eqref{inter_cond} cannot be satisfied for some $L$ then $\bigl(\ad \overline K_p\bigr)_{(1)}=0$ for generic $p$.  To prove the contrapositive of this statement, assume that for a generic $p$ there is a nonzero 
$f \in \bigl(\ad \overline K_p\bigr)_{(1)}$. Set $L=\mathrm{Im} f$. Then
\begin{equation}
\label{kerf}
    \dim \ker f=\mathrm{rank}\, H-
    \mathrm{rank}\, K 
    -\dim L
\end{equation}
On the other hand, if $y\in \ker f$, then from the definition of the first prolongation for every $z\in H_p/K_p$
$$\ad_{f(z)}y=\ad_{f(z)}y-\ad_{f(y)}z=0,$$
because $f(y)=0$, which means that $y\in \ker \ad_v$ for every $v\in L$, that is,
$$\ker f\subset \left(\bigcap_{v\in L_p} \ker \ad_v\right). $$
This and  \eqref{kerf} implies  \eqref{inter_cond}, that is if $\bigl(\ad \overline K_p\bigr)_{(1)}\neq 0$ then there exists a space $L$ satisfying \eqref{inter_cond}, which is the contrapositive of what we needed to prove.
\end{proof}

\begin{corollary}
\label{rank1cor}
If $\mathrm{rank}\, K=1$ then  the original CR structure given by $H$ is recoverable 
if and only if $\ad \overline K_p$ is generated by an operator of rank greater than $1$ for a generic $p$.
\end{corollary}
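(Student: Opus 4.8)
The plan is to apply Proposition \ref{uniqueness of holomorphic bundle} (for the ``if'' direction) together with a direct computation of the first prolongation in the rank-one case using Proposition \ref{prolK} (for the ``only if'' direction), specializing to $\mathrm{rank}\,K=1$ so that $\overline K_p$ is a line and $\ad\overline K_p$ is spanned by a single operator $A:=\ad_v$ for a generator $v$ of $\overline K_p$. First I would record what $\bigl(\ad\overline K_p\bigr)_{(1)}$ looks like here: an element of $\mathrm{Hom}(H_p/K_p,\ad\overline K_p)$ is, after identifying $\ad\overline K_p$ with $\C$ via the basis $A$, a covector $\phi\in (H_p/K_p)^*$, and $\phi$ lies in the first prolongation precisely when $\partial(\phi\otimes A)=0$, i.e.\ $\phi(x)\,A(y)=\phi(y)\,A(x)$ for all $x,y\in H_p/K_p$.

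Next I would analyze this bilinear identity. If $\mathrm{rank}\,A\geq 2$, pick $x,y$ with $A(x),A(y)$ linearly independent; then $\phi(x)A(y)=\phi(y)A(x)$ forces $\phi(x)=\phi(y)=0$, and since every element of $H_p/K_p$ can be completed to such a pair (again using $\mathrm{rank}\,A\geq 2$), we get $\phi=0$, so the first prolongation vanishes and Proposition \ref{prolK} gives recoverability. Conversely, if $\mathrm{rank}\,A\leq 1$: the case $A=0$ is excluded by $2$-nondegeneracy (the sentence after \eqref{adv}), so $\mathrm{rank}\,A=1$. Then $A=w\otimes\psi$ for some nonzero $w\in\overline H_p/\overline K_p$ and nonzero $\psi\in(H_p/K_p)^*$, and one checks that $\phi:=\psi$ satisfies $\phi(x)A(y)=\psi(x)\psi(y)\,w=\phi(y)A(x)$; since $\dim H_p/K_p=n-1\geq 2$ (because $\dim M\geq 7$ forces $n\geq 3$) this $\phi$ is a nonzero element of $\bigl(\ad\overline K_p\bigr)_{(1)}$, so by Proposition \ref{prolK} the structure is not recoverable. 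Because $\mathrm{rank}\,\ad_v$ is an upper semicontinuous function on $M$, the condition ``$\mathrm{rank}\,\ad_v>1$'' holds at a generic point iff it holds on a dense open set, matching the ``generic $p$'' phrasing in Propositions \ref{prolK} and \ref{uniqueness of holomorphic bundle}; I would note this to align the genericity quantifiers.

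Alternatively, the ``if'' direction can be obtained directly from Proposition \ref{uniqueness of holomorphic bundle}: with $\mathrm{rank}\,K=1$, a nonzero subspace $L\subseteq\overline K_p$ must equal $\overline K_p$, so $\dim L=1$ and $\bigcap_{v\in L}\ker\ad_v=\ker A$; condition \eqref{inter_cond} becomes $\dim\ker A\geq (n-1)-1-1=n-3$, i.e.\ $\mathrm{rank}\,A\leq 1$, which fails exactly when $\mathrm{rank}\,A\geq 2$ — giving recoverability. I expect the only mild subtlety to be bookkeeping: keeping the identification $\ad\overline K_p\cong\C$ consistent, correctly tracking the dimension $\dim(H_p/K_p)=n-r$ in the inequality, and confirming that $n-r\geq 2$ under the standing hypothesis $\dim M\geq 7$ with $r=1$, which is what guarantees the nonzero prolongation element actually exists in the non-recoverable case rather than being vacuously zero.
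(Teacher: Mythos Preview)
Your proof is correct and follows essentially the same strategy as the paper: the ``only if'' direction constructs a nonzero element of the first prolongation when $\mathrm{rank}\,A=1$ (your $\phi=\psi$ is exactly the paper's $f$ with $\ker f=\ker\ad_v$), and the ``if'' direction via Proposition \ref{uniqueness of holomorphic bundle} is the paper's argument. Two minor corrections: in your alternative paragraph the right-hand side of \eqref{inter_cond} should be $\mathrm{rank}\,H-\mathrm{rank}\,K-\dim L=n-1-1=n-2$ (not $(n-1)-1-1=n-3$, since $\mathrm{rank}\,H=n$), which then correctly gives $\mathrm{rank}\,A\leq 1$; and the appeal to $\dim M\geq 7$ is unnecessary, since $\phi=\psi$ is nonzero simply because $A\neq 0$.
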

\begin{proof}
First note that for  $\mathrm{rank}\, K=1$ the only nonzero subspace of $\overline K_p$ is $\overline K_p$ itself and the inequality \eqref{inter_cond} is equivalent to the condition that $\mathrm{ad} K_p$ is generated by an operator of rank $1$. This and Proposition \ref{uniqueness of holomorphic bundle} imply the ``if'' part  of the corollary.

To prove the ``only if'' part assume by contradiction that $\ad \overline  K_p$ is generated by rank 1 operator $\mathrm{ad}_{v}$, $v\in \overline K_p$ for generic $p$. Accordingly, $f\in \mathrm{Hom}(H_p/K_p),  \ad \overline  K_p)$ such that $\ker f=\ker \mathrm{ad}_v$ will be a nonzero element of $\ad \bigl(\ad \overline K_p\bigr)_{(1)}=0$. Hence, by Proposition \ref{prolK} the CR structure is not uniquely recoverable, which leads to the contradiction.
\end{proof}
\begin{remark}
\label{reformulations} 
Propositions \ref{prolK}, \ref{uniqueness of holomorphic bundle}, and Corollary \ref{rank1cor} can be reformulated in an obvious way to define CR structures in terms of their dynamical Legendrian contact structures, specifically, using the identifications in Remark \ref{tangentid} and the formulas in \eqref{recov_eq}. In all such reformulations we have to replace ``unique" recovery by ``at most one," because in general a dynamical Legendrian contact structure need not be associated with any CR structure as the distribution in the right side of the second line of \eqref{recov_eq} may not contain any involutive subdistribution of rank equal to the dimension of $J_\gamma^-(p)$. 
\end{remark}

\section{Symbols of 2-nondegenerate CR structures}\label{Symbols of 2-nondegenerate CR structures section}

Let us assume that $(\mathcal{D}, J^-,J^+)$ is a dynamical Legendrian contact structure with involution $\tau$ on $\mathcal{N}$ associated with a germ of some 2-nondegenerate, hypersurface-type CR structure $H$ on a manifold $M$.

For $\gamma\in\mathcal{N}$ and $p\in \pi^{-1}(\gamma)$, note that there is an identification $J^+_\gamma(p) \cong \big(J^-_\gamma(p)\big)^*$ determined by the symplectic form $\omega_\gamma)$. Using this identification, the tangent space $T_{J^{-}_\gamma(p)} J^{-}_{\gamma}$ can be canonically identified with a subspace of $\mathrm{Sym}^2 (J_\gamma^-(p))^*\big)\subset \mathrm{Hom} \big(J^{-}_\gamma(p),J^{+}_\gamma(p)\big)$, which will be denoted by $\delta^{-}(p)$. 

Note that any element $y$ of $\mathrm{Sym}^2 \big((J_\gamma^-(p))^*\big)$, considered as a self-adjoint operator from $J_\gamma^-(p)$ to $(J^-_\gamma(p))^*\cong J^+_\gamma(p)$ can be extended to an element $\tilde y$ of the conformal symplectic algebra $\mathfrak{csp}(\mathcal D_\gamma)$ by setting $\tilde y|_{J^-_\gamma(p)}=y$ and $\tilde y|_{J^+_\gamma(p)}=0$. 
In the sequel we will regard $\delta^{-}(p)$ as an element of $\mathfrak{csp}(\mathcal D_\gamma)$. 

The tangent space $T_{J^{+}_\gamma(p)} J^{+}_{\gamma}$ can be canonically identified with a subspace of $\mathrm{Hom} \big(J^{+}_\gamma(p),J^{-}_\gamma(p)\big)$ similarly, which will be denoted by $\delta^{+}(p)$ and will be also considered as a subspace of $\mathfrak{csp}(\mathcal D_\gamma)$. Note that $\delta^{+}(p)$ is obtained from $\delta^{-}(p)$ by the involution of $\mathfrak{csp}(\mathcal{D}_\gamma)$ induced by the differential of the involution $\tau$ whenever $p$ belongs to the fixed point set of $\tau$.

\begin{definition}
\label{symb_def}
The symbol of the 2-nondegenerate, hypersurface-type CR structure $H$ on a manifold $M$ at a point $p$ in $\C M$ is the orbit of the pair of subspaces $\big(\delta^{-}(p), \delta^{+}(p)\big)$ under the action of the conformal symplectic group $CSp(\mathcal D_\gamma)$ on $\mathfrak{csp}(\mathcal D_\gamma)\times \mathfrak{csp}(\mathcal D_\gamma)$ induced by the $\mathrm{Ad}$-action  of $CSp(\mathcal D_\gamma)$ on $\mathfrak{csp}(\mathcal D_\gamma)$. The 2-nondegenerate, hypersurface-type CR structure $H$ is said to have constant symbol (or be of constant type) if its  symbols at every two points are isomorphic via a conformal symplectic transformation between the corresponding fibers of $\mathcal D$ at these points. If $p$ belongs to $M$ then $\tau$ induces an involution on the $CR$ symbol, and, in this case, we refer to the symbol at $p$ as the CR symbol with involution.
\end{definition}

In the sequel we work with  2-nondegenerate, hypersurface-type CR structures with constant symbol.  Restricting to constancy of symbol still allows for rich variation in the possible geometry, presenting appreciable local equivalence problems as demonstrated in Section \ref{Examples of the canonical parallelism construction}, and in this category we can indeed fully solve the local equivalence problems. The category is notable as it contains, of particular interest, the homogeneous structures. Our constructions can in principle be extended to study structures of non-constant type, and the extent to which this paper's results generalize to that category is described in Appendix \ref{Non-constant symbol structures}.

Definition \ref{symb_def} is formally different from the notion of CR symbol as a certain bigraded (i.e., $(\Z\times \Z)$-graded) vector space introduced in \cite[Definition 2.2]{porter2017absolute}, but it contains equivalent information. To relate these definitions in the subsequent paragraph we use a certain bigrading to denote different spaces. The actual meaning of this bigrading is explained in detail in \cite{porter2017absolute}, but it is not crucial here.

The symbol in Definition \ref{symb_def} of the 2-nondegenerate, hypersurface-type CR structure $H$ with Levi kernel $K$  on a $(2n+1)$-dimensional manifold at a point $p$ in $M$ is encoded by
\begin{enumerate}
\item a $2n$-dimensional conformal symplectic space $\mathfrak g _{-1}$ (over $\C$)  with antilinear involution $\sigma$,
\item a splitting $\mathfrak g_{-1}=\mathfrak g_{-1,-1}\oplus \mathfrak g_{-1,1}$, where $\mathfrak g_{-1, \pm 1}$ are Lagrangian with $\mathfrak g_{-1,1}=\sigma(\mathfrak g_{-1,-1})$, and
\item two $(\mathrm{rank}\, K)$-dimensional subspaces $\mathfrak g_{0, 2}$ and $\mathfrak g_{0, -2}$  belonging to $\mathrm{Sym}(\mathfrak g_{-1,-1}^*)\subset \mathfrak{csp}(\mathfrak g_{-1})$ and 
$\mathrm{Sym}(\mathfrak g_{-1,1}^*)\subset \mathfrak{csp}(\mathfrak g_{-1})$ respectively, such that one is obtained from the other via the map induced by $\sigma$. 
\end{enumerate}
Specifically, 
we can identify $\sigma$ with the antilinear involution induced by the complex conjugation $\tau$ defined on $\C M$, and make the identifications
\begin{align}\label{CR symbol description identifications}
\mathfrak{g}_{-1}\cong\mathcal{D}_{\pi(p)},\quad  \mathfrak{g}_{-1,1}\cong J^+_{\pi(p)}(p),\quad \mathfrak{g}_{-1,-1}\cong J^-_{\pi(p)}(p),\quad\mbox{ and }\quad \mathfrak{g}_{0,\pm 2}\cong \delta^\pm(p).
\end{align}
\begin{remark}
The symbol in Definition \ref{symb_def} of the 2-nondegenerate, hypersurface-type CR structure at a point in $\C M$ not in $M$ is also encoded by the objects in the 3-item list above but excluding their properties referencing an involution.
\end{remark}

Further we can consider the $(2n+1)$-dimensional Heisenberg algebra $\mathfrak g_{-}:=\mathfrak g_{-1}\oplus \mathfrak g_{-2}$ characterized as being a central extension of $\mathfrak g_{-1}$ whose center is $\mathfrak{g}_{-2}$ and such that for any representative $\omega$ of the conformal symplectic structure on $\mathfrak{g}_{-1}$ there exists a nonzero $z\in g_{-2}$ for which $[x, y]:=\omega(x, y) z$ for every $x, y\in \mathfrak g_{-1}$. 

Finally, let $\mathfrak g_{0,0}$ be the maximal  subalgebra of $\mathfrak{csp}(\mathfrak g_{-1})$ such that
\begin{align}\label{g001}
[\mathfrak g_{0,0}, \mathfrak g_{-1, \pm 1}]\subset \mathfrak g_{-1, \pm 1}
\quad\mbox{ and }\quad 
[\mathfrak g_{0,0}, \mathfrak g_{0, \pm 2}]\subset \mathfrak g_{0, \pm 2},
\end{align}
where the brackets in the first line  are just the action of elements $\mathfrak g_{0,0}\subset \mathfrak{csp}(\mathfrak{g}_{-1})$ on $\mathfrak{g}_{-1}$ and the brackets in the second line are as in $\mathfrak{csp}(\mathfrak{g}_{-1})$.

The subspace 
\begin{equation}
\label{CRsymbol_2}
\mathfrak g^0=\mathfrak{g}_-\oplus\mathfrak{g}_{0,-2}\oplus \mathfrak{g}_{0,2}\oplus \mathfrak g_{0,0}\subset \mathfrak{g}_{-}\rtimes \mathfrak{csp}(\mathfrak{g}_{-1})   
\end{equation}
together with the involution induced on it by the involution $\sigma$ on $\mathfrak{g}_{-1}$ is the symbol introduced in \cite[Definition 2.2]{porter2017absolute}. So there is a bijective correspondence between the notion of CR symbols (at points in $M$) introduced here and in \cite{porter2017absolute}. 

Here in the semidirect sum we mean the natural action of $\mathfrak{csp}(\mathfrak{g}_{-1})$ on $\mathfrak g_-$ induced from the standard action on $\mathfrak g_{-1}$. This induced action actually identifies $\mathfrak{csp}(\mathfrak{g}_{-1})$ with the algebra of derivations of $\mathfrak{g}_{-}$, so in the sequel we freely regard elements of each space $\mathfrak{g}_{0,i}$ as endomorphisms of both $\mathfrak{g}_{-1}$ and $\mathfrak{g}_{-}$, letting context dictate which interpretation is being applied.  

\begin{definition}\label{definition of regular}
The CR symbol is called \emph{regular} if  $[\mathfrak{g}_{0, -2}, \mathfrak{g}_{0,2}]\subset \mathfrak g_{0,0}$, where these brackets are as in $\mathfrak{csp}(\mathfrak{g}_{-1})$ or, equivalently, $\mathfrak g^0$ is a Lie subalgebra of 
$\mathfrak{g}_{-}\rtimes \mathfrak{csp}(\mathfrak{g}_{-1})$. 
\end{definition}
\begin{remark}
\label{regcritrem}
By construction a CR symbol is regular if and only if 
$[\mathfrak g_{0,-2}, \mathfrak g_{0, 2}]\subset \mathfrak g_{0,0}$ or equivalently 
\begin{equation}
\label{regcriteq}
[\mathfrak g_{0,2},[\mathfrak g_{0,-2}, \mathfrak g_{0, 2}]]\subset \mathfrak g_{0,2}
\quad\mbox{ and }\quad [\mathfrak g_{0,-2},[\mathfrak g_{0,-2}, \mathfrak g_{0, 2}]]\subset \mathfrak g_{0,-2}.
\end{equation}

\end{remark}

A canonical absolute parallelism for CR structures with the regular symbols was constructed in \cite[Theorem 3.2] {porter2017absolute} using bigraded Tanaka prolongations. The set of regular CR symbols, at least for the case of $\mathrm{rank}\, K=1$, is a rather small discrete subset of the set of all symbols. To describe this in more detail, we need the following observation. 
\begin{remark}\label{symbol_as_pair} A CR  symbol can be also encoded by
\begin{enumerate}
    \item a real line of Hermitian forms spanned by a form $\ell:\mathfrak{g}_{-1,1}\times \mathfrak{g}_{-1,1}\to \C$ defined by $\ell(x,y)=i\omega\left(x,\sigma(y)\right)$, for some representative $\omega$ of the conformal symplectic form on $\mathfrak{g}_{-1}$, and
    \item a vector space of $\ell$-self-adjoint antilinear operators on $\mathfrak{g}_{-1,1}$ defined by $\{x\mapsto [v,\sigma(x)]\,|\,v\in \mathfrak{g}_{0,2}\}$
\end{enumerate}
because the symbol's Heisenberg algebra structure on $\mathfrak{g}_{-}$ can be recovered from the line of Hermitian forms spanned by $\ell$, and the subspaces $\mathfrak{g}_{0,\pm 2}$ can be recovered from the vector space of $\ell$-self-adjoint antilinear operators. In particular, if $\mathrm{rank}\, K=1$, then the vector space in item (2) is generated by an $\ell$-self-adjoint antilinear operator $A$, so  the  CR symbol is encoded in the pair  $(\R\ell,\C A)$. Canonical forms for these pairs were obtained in \cite{sykes2020canonical}.
\end{remark} 

This allows us to state the following proposition.
\begin{proposition}[{\cite[section 4] {porter2017absolute}, see also Remark \ref{regmatrixrem} for generalization  to an arbitrary $\mathrm{rank}\, K$}]\label{regularity condition for rank-1 K}
If $\mathrm{rank}\, K=1$, $\R\ell$ is the real line of Hermitian forms, and $\C A$ is the complex line of $\ell$-selfadjoint antilinear operators spanned by a representative $A:\mathfrak{g}_{-1,1}\to\mathfrak{g}_{-1,1}$ such that the CR structure's symbol is encoded in $(\R\ell,\C A)$ as described above, then the CR symbol is regular if and only if 
\begin{equation}
\label{cube}
A^3\in\C A.
\end{equation}

\end{proposition}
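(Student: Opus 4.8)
The plan is to translate the regularity condition from Remark \ref{regcritrem} into a statement purely about the antilinear operator $A$ and the Hermitian form $\ell$, using the correspondence set up in Remark \ref{symbol_as_pair}. First I would fix a representative $\omega$ of the conformal symplectic form on $\mathfrak{g}_{-1}$, identify $\mathfrak{g}_{-1,-1}$ with $(\mathfrak{g}_{-1,1})^*$ via $\omega$, and recall that $\mathfrak{g}_{0,2}$ sits inside $\mathrm{Sym}^2(\mathfrak{g}_{-1,-1}^*)$ acting as an operator $J^-\to J^+$ (here $\mathfrak{g}_{-1,1}$) that vanishes on $\mathfrak{g}_{-1,1}$, while $\mathfrak{g}_{0,-2}$ sits inside $\mathrm{Sym}^2(\mathfrak{g}_{-1,1}^*)$ acting as an operator $\mathfrak{g}_{-1,1}\to\mathfrak{g}_{-1,-1}$ that vanishes on $\mathfrak{g}_{-1,-1}$. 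Under the identification $\mathfrak{g}_{0,2}\cong\C A$ (and $\mathfrak{g}_{0,-2}\cong \C\sigma A\sigma$ via the involution), I would compute the bracket $[\mathfrak{g}_{0,-2},\mathfrak{g}_{0,2}]$ inside $\mathfrak{csp}(\mathfrak{g}_{-1})$ explicitly. Since an element $v\in\mathfrak{g}_{0,2}$ maps $\mathfrak{g}_{-1,-1}\to\mathfrak{g}_{-1,1}$ and kills $\mathfrak{g}_{-1,1}$, and $\bar v\in\mathfrak{g}_{0,-2}$ maps $\mathfrak{g}_{-1,1}\to\mathfrak{g}_{-1,-1}$ and kills $\mathfrak{g}_{-1,-1}$, the commutator $[v,\bar v]=v\bar v-\bar v v$ preserves each of $\mathfrak{g}_{-1,1}$ and $\mathfrak{g}_{-1,-1}$ — so it automatically lies in the block-diagonal part of $\mathfrak{csp}(\mathfrak{g}_{-1})$ — and its action on $\mathfrak{g}_{-1,1}$ is $-\bar v v$ while on $\mathfrak{g}_{-1,-1}$ it is $v\bar v$, the two blocks being adjoints of each other with respect to $\omega$.

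Next I would bring the antilinear picture into play. Writing $v = A\circ\sigma$ as a map from $\mathfrak{g}_{-1,-1}$ to $\mathfrak{g}_{-1,1}$, where $\sigma:\mathfrak{g}_{-1,-1}\to\mathfrak{g}_{-1,1}$ is the (antilinear) involution restricted suitably, and similarly $\bar v = \sigma^{-1}\circ A$ (or $\sigma A$ depending on conventions) as a map $\mathfrak{g}_{-1,1}\to\mathfrak{g}_{-1,-1}$, one checks that the composition $\bar v v$ restricted to $\mathfrak{g}_{-1,1}$ equals (up to a fixed real scalar depending on the normalization of $\ell$) the linear operator $A^2$ on $\mathfrak{g}_{-1,1}$ — a composite of two antilinear maps is linear. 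Here I would use the self-adjointness of $A$ with respect to $\ell$, which is exactly the statement that $v=A\sigma$ lands in $\mathrm{Sym}^2$ rather than the full $\mathrm{Hom}$ space, i.e. that $\omega(vx,y)$ is symmetric in $x,y\in\mathfrak{g}_{-1,-1}$. So $[\mathfrak{g}_{0,-2},\mathfrak{g}_{0,2}]$ is spanned (over $\C$, allowing also the $\mathfrak{g}_{0,0}$-ambiguity which includes the grading element) by the block operator whose action on $\mathfrak{g}_{-1,1}$ is $A^2$ and on $\mathfrak{g}_{-1,-1}$ the $\omega$-adjoint of $A^2$.

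Then I would invoke Remark \ref{regcritrem}: regularity is equivalent to $[\mathfrak{g}_{0,-2},\mathfrak{g}_{0,2}]\subset\mathfrak{g}_{0,0}$, and by the second equivalent formulation \eqref{regcriteq} it is equivalent to $[\mathfrak{g}_{0,2},[\mathfrak{g}_{0,-2},\mathfrak{g}_{0,2}]]\subset\mathfrak{g}_{0,2}$ (the other inclusion being the $\sigma$-conjugate). Now $[\mathfrak{g}_{0,2},[\mathfrak{g}_{0,-2},\mathfrak{g}_{0,2}]]$ is the commutator of $v=A\sigma$ with the block operator built from $A^2$; computing its action on $\mathfrak{g}_{-1,-1}$ (the only nonzero block, since the result must again be of type $(0,2)$, i.e.\ map $\mathfrak{g}_{-1,-1}\to\mathfrak{g}_{-1,1}$ and kill $\mathfrak{g}_{-1,1}$) produces the map $x\mapsto A^2 v x - v(\text{adjoint of }A^2)x$, which after unwinding the identifications becomes the antilinear operator proportional to $A^2\cdot A\sigma - A\sigma\cdot(\text{something}) = (A^3 - cA)\sigma$ type expression — precisely: it corresponds to the antilinear operator $A^3$ (composite of $A^2$ linear with $A$ antilinear) minus a correction already in $\mathfrak{g}_{0,2}\cong\C A$. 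Hence the condition $[\mathfrak{g}_{0,2},[\mathfrak{g}_{0,-2},\mathfrak{g}_{0,2}]]\subset\mathfrak{g}_{0,2}$ becomes exactly $A^3\in\C A + \{\text{the part of }[\mathfrak{g}_{0,-2},\mathfrak{g}_{0,2}]\text{ that already lies in }\mathfrak{g}_{0,0}\}$; checking that that extra term contributes nothing new gives $A^3\in\C A$, which is \eqref{cube}.

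The main obstacle I anticipate is bookkeeping the antilinear/linear and symmetric/Hom identifications carefully enough to see that \emph{(i)} the commutator $[\mathfrak{g}_{0,-2},\mathfrak{g}_{0,2}]$ genuinely corresponds to $A^2$ (and not, say, $AA^*$ for some other adjoint), which hinges on the self-adjointness of $A$ with respect to $\ell$ and on the fact that $\sigma^2 = \mathrm{id}$ on the relevant spaces, and \emph{(ii)} that the double bracket collapses to $A^3$ modulo $\C A$ without spurious terms — in particular, one must verify that the grading element of $\mathfrak{g}_{0,0}$ contributes only a multiple of $A$ when bracketed back into $\mathfrak{g}_{0,2}$, so that the "already in $\mathfrak{g}_{0,0}$" ambiguity does not enlarge the span beyond $\C A$. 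Since $\mathrm{rank}\,K=1$ forces $\mathfrak{g}_{0,2}$ to be one-dimensional, the inclusions in \eqref{regcriteq} are genuinely just scalar conditions, so once the identification $[\mathfrak{g}_{0,2},[\mathfrak{g}_{0,-2},\mathfrak{g}_{0,2}]]\leftrightarrow A^3 \bmod \C A$ is established the proposition follows immediately; alternatively, I could shortcut by citing the canonical forms of $(\R\ell,\C A)$ from \cite{sykes2020canonical} and verifying $A^3\in\C A$ case by case against the list of regular symbols, but the conceptual computation above is cleaner.
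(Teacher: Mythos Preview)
Your approach is correct and is essentially the coordinate-free version of what the paper indicates via Remark~\ref{regmatrixrem}: there one represents $\mathfrak{g}_{0,2}$ by the matrix $C_1$ and obtains regularity $\Leftrightarrow C_1\overline{C_1}C_1\in\C C_1$, which under $A(x)=C_1\overline{x}$ is precisely $A^3\in\C A$. Your concern about a $\mathfrak{g}_{0,0}$-ambiguity is unnecessary and can be dropped: with $v=A\circ\sigma|_{\mathfrak{g}_{-1,-1}}$ and $\bar v=\sigma\circ A|_{\mathfrak{g}_{-1,1}}$, one computes directly that $[v,[\bar v,v]]$ lies entirely in $(\mathfrak{csp}(\mathfrak{g}_{-1}))_{0,2}$ and equals $2\,v\bar v v=2\,A^3\circ\sigma$ on $\mathfrak{g}_{-1,-1}$, with no block-diagonal remainder, so the inclusion $[v,[\bar v,v]]\in\C v$ is literally $A^3\in\C A$.
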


In particular, if $\mathrm{rank}\, K=1$ and $\ad \overline K$ is generated by a rank 1 operator, then the corresponding antilinear operator $A$ is of rank 1 and so it satisfies \eqref{cube}. Therefore the CR symbol in this case is always regular. This together with  Corollary \ref{rank1cor} immediately implies the following corollary.
\begin{corollary}
If the symbol of the CR structure $H$ at a point $p$ is not regular and $\mathrm{rank}\, K=1$ then the CR structure $H$ is recoverable  in some neighborhood of $p$, that is, $H$ is uniquely determined by the dynamical Legendrian contact structure $(\mathcal{D},J^+,\tau)$ associated with the germ of $H$ at $p$.
\end{corollary}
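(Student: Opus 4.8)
The plan is to deduce this corollary purely formally from Corollary \ref{rank1cor} together with Proposition \ref{regularity condition for rank-1 K} and the elementary observation stated immediately after it. Since we are in the constant-symbol setting, if the CR symbol fails to be regular at $p$ then it fails to be regular at every point, and the rank of the generator of $\ad\overline K$ is constant; so it is harmless to argue at a generic point, as Corollary \ref{rank1cor} requires.

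First I would record the linear-algebra fact that a rank-$1$ $\ell$-self-adjoint antilinear operator $A$ on $\mathfrak{g}_{-1,1}$ automatically satisfies condition \eqref{cube}: writing $A(x)=\phi(x)\,w$ for a nonzero $w$ and a necessarily antilinear functional $\phi$, one finds that $A^2$ is linear and $A^3(x)=|\phi(w)|^2 A(x)$, so $A^3\in\R A\subset\C A$ and the symbol is regular by Proposition \ref{regularity condition for rank-1 K}. Taking the contrapositive, if the CR symbol at $p$ is not regular then the generator of $\ad\overline K_p$ cannot have rank $1$; and it cannot have rank $0$ either, because $2$-nondegeneracy forces $\ad_v\neq 0$ for every nonzero $v\in\overline K_p$ and $\overline K_p$ is one-dimensional. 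Hence $\ad\overline K_p$ is generated by an operator of rank strictly greater than $1$.

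Next I would invoke Corollary \ref{rank1cor}, whose hypothesis is exactly that $\ad\overline K_p$ be generated by an operator of rank greater than $1$ at a generic $p$ (equivalently, by constancy of the symbol, at our fixed $p$). This yields that $H$ is recoverable in some neighborhood of $p$. The second assertion of the corollary is then just the meaning of recoverability as introduced in Section \ref{sec2}: by Remark \ref{recover_rem} and Remark \ref{reformulations}, recoverability is precisely the statement that $H$ is the unique CR structure whose associated dynamical Legendrian contact structure is $(\mathcal D,J^+,\tau)$, the reconstruction being given by the formulas \eqref{recov_eq}.

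There is essentially no serious obstacle here; the argument is a short chain of implications, which is why the text asserts it is immediate. The only points needing a moment's attention are (i) the antilinear computation $A^3=|\phi(w)|^2 A$ — one must keep in mind that antilinearity makes $A^2$ complex-linear and $A^3$ antilinear again, hence proportional to $A$ — and (ii) bookkeeping the "generic $p$" qualifier of Corollary \ref{rank1cor}, which is inconsequential under the standing constant-symbol assumption.
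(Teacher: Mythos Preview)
Your proposal is correct and follows essentially the same approach as the paper: both argue by contrapositive that a rank-$1$ antilinear operator $A$ automatically satisfies $A^3\in\C A$ (hence rank $1$ forces regularity), and then invoke Corollary \ref{rank1cor}. You simply spell out the computation $A^3=|\phi(w)|^2 A$ and the handling of the ``generic $p$'' hypothesis in more detail than the paper, which treats the implication as immediate.
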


Moreover, based on the classification of regular symbols from \cite[section 4]{porter2017absolute}, for fixed $\mathrm{rank}\,H >1$ and given signature of $\ell$,  among all regular CR symbols,  there are exactly two symbols for which the operator $A$ has rank $1$ and is consequently non-recoverable if the reduced Levi form is sign-indefinite, and exactly one if the reduced Levi form is sign-definite. These two symbols in the sign-definite case are distinguished by whether the corresponding antilinear operator $A$ is nilpotent or not. The former is not possible in the sign-definite case. 

Note that the recoverability criteria of  Proposition \ref{prolK}  can be reformulated in  terms of the symbol of the CR structure as follows.

\begin{proposition}
\label{prolK1} A $2$-nondegenerate, hypersurface type CR structure with symbol $\mathfrak g^0 (p)$ at a point $p$ is recoverable in a neighborhood of $p$  if and only if the first prolongation $(\mathfrak{g}_{0, 2})_{(1)}$ of the space $\mathfrak g_{0,2}$ considered as the subspace of $\mathfrak{csp}(\mathfrak g_{-1})$ vanishes.
\end{proposition}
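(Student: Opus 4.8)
The plan is to deduce the statement from Proposition \ref{prolK}, so the only work is to translate the vanishing of $\bigl(\ad\overline K_p\bigr)_{(1)}$ into the vanishing of $(\mathfrak g_{0,2})_{(1)}$ via the dictionary assembled before Definition \ref{symb_def}. First recall that $(\pi_*)_p$ identifies $H_p/K_p$ with $J^-_{\pi(p)}(p)\cong\mathfrak g_{-1,-1}$ and $\overline H_p/\overline K_p$ with $J^+_{\pi(p)}(p)\cong\mathfrak g_{-1,1}$, and that, by Remark \ref{tangentid}, under these identifications the operator $\ad_v$ for $v\in\overline K_p$ becomes the tangent vector $\bigl(J^-_{\pi(p)}\bigr)_*v\in T_{J^-_{\pi(p)}(p)}J^-_{\pi(p)}=\delta^-(p)$. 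Since the structure is $2$-nondegenerate, $v\mapsto\bigl(J^-_{\pi(p)}\bigr)_*v$ is an isomorphism from $\overline K_p$ onto $\delta^-(p)$ (both of dimension $\mathrm{rank}\,K$), so $\ad\overline K_p\subset\mathrm{Hom}\bigl(H_p/K_p,\overline H_p/\overline K_p\bigr)$ is carried exactly onto $\delta^-(p)\cong\mathfrak g_{0,2}$, viewed as a subspace of $\mathrm{Hom}(\mathfrak g_{-1,-1},\mathfrak g_{-1,1})$. Hence $\bigl(\ad\overline K_p\bigr)_{(1)}$ is canonically isomorphic to the first prolongation of $\mathfrak g_{0,2}$ computed inside $\mathrm{Hom}(\mathfrak g_{-1,-1},\mathfrak g_{-1,1})$, namely $\{\,f\in\mathrm{Hom}(\mathfrak g_{-1,-1},\mathfrak g_{0,2})\mid f(v_1)v_2=f(v_2)v_1\ \text{for all }v_1,v_2\in\mathfrak g_{-1,-1}\,\}$.

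The remaining point -- and the only one requiring an argument -- is that this space coincides with $(\mathfrak g_{0,2})_{(1)}$ as in the proposition, i.e. with $\{\,g\in\mathrm{Hom}(\mathfrak g_{-1},\mathfrak g_{0,2})\mid g(w_1)w_2=g(w_2)w_1\ \text{for all }w_1,w_2\in\mathfrak g_{-1}\,\}$, where each element of $\mathfrak g_{0,2}$ now acts on all of $\mathfrak g_{-1}$ as the endomorphism $\tilde y$ described before Definition \ref{symb_def} (so $\tilde y(\mathfrak g_{-1,-1})\subset\mathfrak g_{-1,1}$ and $\tilde y|_{\mathfrak g_{-1,1}}=0$). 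I would prove that restriction to $\mathfrak g_{-1,-1}$ gives an isomorphism between the two prolongations. The key observation is that every $g$ in the $\mathfrak{csp}$-prolongation annihilates $\mathfrak g_{-1,1}$: for $w^-\in\mathfrak g_{-1,-1}$ and $w^+\in\mathfrak g_{-1,1}$ the prolongation identity yields $g(w^+)w^-=g(w^-)w^+=0$ because $g(w^-)\in\mathfrak g_{0,2}$ kills $\mathfrak g_{-1,1}$; letting $w^-$ vary and using that $g(w^+)\in\mathfrak g_{0,2}$ also kills $\mathfrak g_{-1,1}$, we get $g(w^+)=0$. Consequently $g$ is determined by $g|_{\mathfrak g_{-1,-1}}$, and, decomposing each $w_i=w_i^-+w_i^+$ and using $g(w_i)w_j=g(w_i^-)w_j^-$, the identity for $g$ becomes exactly the prolongation identity for $g|_{\mathfrak g_{-1,-1}}$; surjectivity of the restriction is obtained by extending any $f$ from the smaller prolongation by zero on $\mathfrak g_{-1,1}$ and reversing this computation.

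Combining the two steps shows $(\mathfrak g_{0,2})_{(1)}=0$ if and only if $\bigl(\ad\overline K_p\bigr)_{(1)}=0$, whereupon Proposition \ref{prolK} gives the equivalence with recoverability in a neighborhood of $p$; since we work with CR structures of constant symbol, $\dim\bigl(\ad\overline K_q\bigr)_{(1)}$ does not depend on $q$, so the pointwise condition at $p$ indeed controls recoverability on a neighborhood. The main obstacle is the second step, and it is purely a matter of bookkeeping: one must check that enlarging the ambient space of the Hom-prolongation from $\mathrm{Hom}(\mathfrak g_{-1,-1},\mathfrak g_{-1,1})$ to $\mathfrak{csp}(\mathfrak g_{-1})$ does not enlarge the prolongation, which is exactly what the identity $\tilde y|_{\mathfrak g_{-1,1}}=0$ for $\tilde y\in\mathfrak g_{0,2}$ guarantees.
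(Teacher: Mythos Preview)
Your proposal is correct and follows exactly the route the paper intends: the paper presents Proposition~\ref{prolK1} without proof, simply as a reformulation of Proposition~\ref{prolK} in terms of the symbol data, and your argument supplies precisely the details of that translation. The one nontrivial point you isolate---that passing from $\mathrm{Hom}(\mathfrak g_{-1,-1},\mathfrak g_{-1,1})$ to $\mathfrak{csp}(\mathfrak g_{-1})$ does not enlarge the first prolongation because elements of $\mathfrak g_{0,2}$ annihilate $\mathfrak g_{-1,1}$---is exactly what needs checking and your verification is clean; the final remark invoking constant symbol is harmless but unnecessary, since vanishing of the prolongation at $p$ already propagates to a neighborhood by upper semicontinuity of the dimension of the kernel of the Spencer operator.
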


The CR symbols satisfying the condition of Proposition \eqref{prolK1} will be called recoverable.

\section{Modified CR symbols and the construction of absolute parallelisms}\label{bundles on curves in Lagrangian Grassmannians}

Assume that $H$ is a 2-nondegenerate hypersurface-type CR structure with constant symbol $\mathfrak g^0$ (having the involution $\sigma$) as in \eqref{CRsymbol_2}. Here $\mathfrak{g}^0$, which we will call the ``model space,'' is a fixed representative in the equivalence class of CR symbols. On the other hand, at every point $p\in \C M$ we have the representative $\mathfrak g^0(p)$ of the equivalence class of CR symbols, obtained by the identifications \eqref{CR symbol description identifications} and the corresponding bigraded components of it will be denoted by $\mathfrak{g}_{i, j}(p)$, including $\mathfrak{g}_{0,0}(p)$ defined accordingly as in \eqref{g001}.   

Note that $\mathfrak{g}_{-}$ and each $\mathfrak{g}_{i,j}$ is determined by the symbol of the CR structure at a point $p$, so we will write $\mathfrak{g}_{-}(p)$ or $\mathfrak{g}_{i,j}(p)$ instead of $\mathfrak{g}_{-}$ or $\mathfrak{g}_{i,j}$ whenever we need to emphasize the dependence of $\mathfrak{g}_{-}$ or $\mathfrak{g}_{i,j}$ on $p$. 

We now define two bundles, one over $\C M$ and another one over $\mathcal N$ with the same total space $P^0$. For the first bundle $\mathrm{pr}: P^0\to \C M$, its fiber $\mathrm {pr}^{-1} (p)$ over $p\in\C M$ is comprised of all \emph{adapted frames}, or bigraded Lie algebra isomorphisms, that is, 
\begin{align}
\label{fiberP0}
\mathrm{pr}^{-1}(p)=\left\{
\varphi:\mathfrak g_-\to \mathfrak g_-(p) \left|\ 
\parbox{7.9cm}{
$\varphi(\mathfrak g_{i,j})=\mathfrak g_{i,j}(p)\quad\forall\, (i,j)\in \{(-1, \pm 1),(-2, 0)\}$,\\
$\varphi^{-1}\circ \mathfrak g_{0,\pm2}(p)\circ\varphi=\mathfrak g_{0,\pm2}$, and \\
$\varphi([ y_1, y_2])=[\varphi( y_1),\varphi( y_2)] \quad\forall\, y_1, y_2\in\mathfrak g_-$
}
\right.\right\}.
\end{align}
Here $\mathfrak g_{-2, 0}:=\mathfrak g_{-2}$ and  $\mathfrak g_{-2, 0}(p):=\mathfrak g_{-2}(p)$. The second bundle is $\pi\circ \mathrm{pr}:P^0\rightarrow \mathcal N$. Furthermore, we define a bundle $\mathrm{pr}:\Re P^0\to M$ by
\begin{align}\label{involution invariant frames}
\Re P^0:=\{\varphi\in P^{0}\,|\,\mathrm{pr}(\varphi)\in M\mbox{ and } \tau_*\circ \varphi=\varphi\circ \sigma\},
\end{align}
where $\sigma$ denotes the fixed involution on the model space $\mathfrak{g}^0$ and $\tau_*$ denotes the involution on $J^+\times J^-$ induced by the map $\tau$ introduced just before \eqref{antihol}. Note that the set-inclusion conditions defining the set in \eqref{involution invariant frames} have some redundancy, but we have stated them as such for clarity.

\begin{remark}
Using the standard identification of the automorphism group of the conformal symplectic structure on $\mathfrak{g}_{-1}$ and the automorphism group of the graded Heisenberg algebra $\mathfrak{g}_{-}$, one can alternatively define $P^0$ to be an equivalent object formally described as a bundle of maps from $\mathfrak{g}_{-1}$ to $\mathfrak{g}_{-1}(p)$ in the obvious way. When calculating $P^0$ for specific examples, this alternate description is more succinct, and we use it in section \ref{examples}.
\end{remark}

For any $\psi\in P^0$ with $\gamma=\pi\circ\mathrm{pr}(\psi)$, the tangent space of the fiber $(P^0)_{\gamma}=(\pi\circ \mathrm{pr})^{-1}(\gamma)$ of the second bundle at $\psi$ can be identified with a subspace of $\mathfrak{csp}(\mathfrak g_-)$ by the map $\theta_0:T_\psi(P^0)_{\gamma}\to \mathfrak{csp}(\mathfrak g_-)$ given by
\begin{align}\label{degree zero soldering form section 4}
\theta_0\big(\psi^\prime(0)\big):=\big(\pi_*\circ\psi\big)^{-1}\circ \left.\frac{d}{dt}\right|_{t=0}\left(\pi_*\circ\psi(t)\right)
\end{align}
where $\psi:(-\epsilon,\epsilon)\to (P^0)_{\gamma}$  denotes an arbitrary curve in $(P^0)_{\gamma}$ with $\psi(0)=\psi$. 
\begin{remark}
For $\psi\in (P^0)_{\gamma}$ the map $\widehat{\psi}\mapsto \left(\pi_*\circ \psi\right)^{-1}\circ (\pi_*\circ \widehat{\psi})$ embeds the fiber $(P^0)_{\gamma}$ into the Lie group $CSp(\mathfrak{g}_{-1})$, and the map $\theta_0$ in \eqref{degree zero soldering form section 4} can be interpreted as the Maurer--Cartan form on $CSp(\mathfrak{g}_{-1})$ restricted to the tangent spaces of the embedded submanifold $\left(\pi_*\circ \psi\right)^{-1} (P^0)_{\gamma}$, as summarized in the following diagram.

\begin{center}
\begin{tikzcd}[every matrix/.append style={name=m}]
\left(P^{0}\right)_\gamma \arrow[ddr, "\Pi"] \arrow[dd,swap,"\mathrm{pr}" ]\arrow[rr,"\widehat{\psi}\mapsto(\pi_*\circ{\psi})^{-1}\circ(\pi_* \circ\widehat{\psi})" ] & & CSp(\mathfrak{g_{-1}}) \arrow[dd,swap,"\parbox{2cm}{\tiny \flushright Maurer--Cartan\\ form}" ]\arrow[dd,"\bgroup \def\arraystretch{1}\begin{matrix}T_{(\pi_*\circ{\psi})^{-1}\circ(\pi_* \circ\widehat{\psi})} \left(\left(\pi_*\circ \psi\right)^{-1} (P^0)_{\gamma}\right)\\ \downmapsto\\ \mathfrak{g}_0^{\mathrm{mod}}(\widehat{\psi}) \end{matrix}\egroup" ]\\
&     &   \\
\pi^{-1}(\gamma)\subset \mathbb{C}M\arrow[r, "\pi"]  &\gamma= \Pi(\psi)\in \mathcal{N} & \mathfrak{csp}(\mathfrak{g}_{-1})
\end{tikzcd}
 \end{center}
\end{remark}

Let 
\begin{equation}
\label{modsymeq}
\mathfrak g_0^{\mathrm{mod}}(\psi):=\theta_0(T_\psi(P^0)_{\gamma}).
\end{equation}
Here the superscript $\mathrm{mod}$ stands for modified in order to distinguish it from  the space $\mathfrak g_0:=\mathfrak g_{0, -2}\oplus \mathfrak g_{0,0}\oplus \mathfrak{g}_{0, 2}$, defined in \cite{porter2017absolute}, which is in general another subspace of $\mathfrak {csp}(\mathfrak g_-)$.

\begin{definition}
The space $\mathfrak g^{0, \mathrm{mod}}(\psi):=\mathfrak g_-\oplus \mathfrak g_0^{\mathrm{mod}}(\psi) $ is called the modified CR symbol of the CR structure at the point $\psi\in P^0$.  
\end{definition}

The bigrading $\mathfrak{g}_{-1}=\mathfrak{g}_{-1,-1}\oplus \mathfrak{g}_{-1,1}$ of $\mathfrak{g}_{-1}$ confers a bigrading on $\mathfrak{csp}(\mathfrak{g}_{-1})$ with weighted components given by
\[
\big(\mathfrak{csp}(\mathfrak{g}_{-1})\big)_{0,i}=\big\{\varphi\in\mathfrak{csp}(\mathfrak{g}_{-1})\,|\, \varphi(\mathfrak{g}_{-1,j})\subset \mathfrak{g}_{-1,i+j}\, \forall i\in\{-1,1\} \big\},
\]
yielding the decomposition 
\begin{align}\label{csp decomposition}
\mathfrak{csp}(\mathfrak{g}_{-1})=\big(\mathfrak{csp}(\mathfrak{g}_{-1})\big)_{0,-2}\oplus\big(\mathfrak{csp}(\mathfrak{g}_{-1})\big)_{0,0}\oplus \big(\mathfrak{csp}(\mathfrak{g}_{-1})\big)_{0,2}.
\end{align}
For an element $\varphi \in \mathfrak{csp}(\mathfrak{g}_{-1})$ (or subset $S \subset \mathfrak{csp}(\mathfrak{g}_{-1})$), we write $\varphi_{0,i}$ (or $S_{0,i}$) to denote the natural projection of $\varphi$ (or $S$) into $\big(\mathfrak{csp}(\mathfrak{g}_{-1})\big)_{0,i}$ with respect to the decomposition in \eqref{csp decomposition}.

By construction 
\begin{align}\label{theta on 0,2 horizontals}
\left(\theta_0\left(\mathrm{pr}_*^{-1}(K_p)\right) \right)_{0,2}=\mathfrak{g}_{0,2}\quad\quad\forall\, p\in M,
\end{align}
and
\begin{align}\label{theta on 0,-2 horizontals}
\left(\theta_0\left(\mathrm{pr}_*^{-1}\left(\overline{K}_p\right)\right) \right)_{0,-2}=\mathfrak{g}_{0,-2}\quad\quad\forall\, p\in M.
\end{align}
The bundle $\mathrm{pr}:P^0\to \C M$ is a principal bundle whose structure group, which we label $G_{0,0}$, has the Lie algebra $\mathfrak{g}_{0,0}$, and $\theta_0$ is the principal connection on this bundle, which means, in particular, that 
\begin{align}\label{theta on verticals}
\theta_0\left(\mathrm{pr}_*^{-1}(0)\right)=\mathfrak{g}_{0,0}.
\end{align}
Since
\[
T_\psi(P^0)_{\gamma}\cong \mathrm{pr}_*^{-1}(0)\oplus K_{\mathrm{pr}(\psi)} \oplus \overline{K}_{\mathrm{pr}(\psi)}
\]
and the subspace $\mathrm{pr}_*^{-1}(0)\subset T_\psi(P^0)_{\gamma}$ belongs to the kernel of the projections $\varphi\mapsto \varphi_{0,\pm 2}$, it follows from \eqref{theta on 0,2 horizontals}, \eqref{theta on 0,-2 horizontals}, and \eqref{theta on verticals} that 
\begin{align}
\dim \theta_0\left( T_\psi(P^0)_{\gamma}\right)&=\dim \theta_0\left(\mathrm{pr}_*^{-1}(0)\right) +\dim \left(\theta_0\left(\mathrm{pr}_*^{-1}(K_{\mathrm{pr}(\psi)})\right)\right)_{0,2} +\dim \left(\theta_0\left(\mathrm{pr}_*^{-1}\left(\overline{K}_{\mathrm{pr}(\psi)}\right)\right)\right)_{0,-2}
\\
&= \dim\mathfrak{g}_{0,0}+\dim K_{\mathrm{pr}(\psi)}+\dim \overline{K}_{\mathrm{pr}(\psi)}
=\dim T_\psi(P^0)_{\gamma},
\end{align}
and hence $\theta_0$ is injective on each tangent space $T_\psi(P^0)_{\gamma}$. Accordingly,
\[
\dim \mathfrak{g}^0\big(\mathrm{pr}(\psi)\big)=\dim \mathfrak{g}^{0,\mathrm{mod}}(\psi)\quad\quad\forall\, \psi\in P^0.
\]

Now we are going to define the universal Tanaka prolongation of the modified symbol $\mathfrak g^{0, \mathrm{mod}}(\psi)$ by analogy with the standard Tanaka theory \cite{tanaka1, zelenko2009tanaka}. A nonstandard feature here is that the modified CR symbol is not necessarily a Lie algebra. In the standard Tanaka theory the universal Tanaka prolongation of a graded Lie algebra $\mathfrak{g}^0=\mathfrak{g}_{-2}\oplus \mathfrak{g}_{-1}\oplus \mathfrak{g}_0$ can be shortly defined as the largest $\mathbb Z$-graded subalgebra containing $\mathfrak{g}^0$ as its nonnegative part satisfying the additional condition that nonnegatively graded elements have nontrivial brackets with $\mathfrak{g}_{-1}$ or, equivalently, one can define each positively graded component of this resulting algebra recursively.  Contrastingly, since $\mathfrak g^{0, \mathrm {mod}}(\psi)$ is not necessarily a Lie algebra, the recursive definition is the only available one to define this prolongation of $\mathfrak g^{0, \mathrm {mod}}(\psi)$  because the resulting universal prolongation is also not a Lie algebra. In more detail, we recursively define the subspaces $\mathfrak g_k^{\mathrm{mod}}(\psi)$
by 
\begin{equation}
\label{mgk 1}
\mathfrak g_1^{\mathrm{mod}} (\psi):=\left\{f\in {\rm Hom}(\mathfrak g_{-2},\mathfrak g_{-1})+\mathfrak {\rm Hom}\big(\mathfrak g_{-1},\mathfrak g_{0} (\psi)\big)\,\left|\,\parbox{5.8cm}{$f ([v_1,v_2])=[f (v_1),v_2]+[v_1, f( v_2)]$\\$\forall\, v_1, v_2 \in \mathfrak g_-$}\right.\right\}
\end{equation}
and
\begin{equation}
\label{mgk higher}
\mathfrak g_k^{\mathrm{mod}} (\psi):=\left\{f\in\bigoplus_{i<0}{\rm Hom}(\mathfrak  g_i,\mathfrak g_{i+k}(\psi))\left|\,\parbox{6cm}{$f ([v_1,v_2])=[f (v_1),v_2]+[v_1, f( v_2)]$\\$\forall\, v_1, v_2 \in \mathfrak g_-$}\right.\right\} \quad\quad\forall\, k>1,
\end{equation}
and the universal Tanaka prolongation of $\mathfrak g^{0, \mathrm {mod}}(\psi)$
is the space
\begin{equation}
\label{universal prolongation definition}
\mathfrak u(\psi):=\mathfrak g_-\oplus \bigoplus_{k\geq 0} \mathfrak g_k^{\mathrm{mod}}(\psi).
\end{equation}
Note that the antilinear involution on $\mathfrak g_-$ naturally induces an involution on $\mathfrak {csp}(\mathfrak g_-)$, which in turn induces a natural involution on each modified symbol $\mathfrak g^{0, \mathrm{mod}}(\psi)$ whenever $\psi\in \Re P^0$.

To state our main result we introduce the following definitions.
\begin{definition}
\label{abstractmod}
Fix a CR symbol $\mathfrak g^0=\mathfrak{g}_{-}\oplus \mathfrak{g}_{0,-2}\oplus \mathfrak{g}_{0,0}\oplus \mathfrak{g}_{0,2}$ of the form in \eqref{CRsymbol_2}. A subspace $\mathfrak g^{0, \mathrm{mod}}= \mathfrak g_-\oplus \mathfrak g_0^{ \mathrm {mod}}$ of $\mathfrak g_- \rtimes\mathfrak {csp}(\mathfrak g_{-1}) $ is called an abstract modified CR symbol (with involution) of type $\mathfrak g^0$ if the following properties hold
\begin{enumerate}
    \item $\dim \mathfrak{g}^{\mathrm{mod}}_{0}=\dim \mathfrak{g}_{0}$;
    \item $\mathfrak g_0^{\mathrm{mod}}\cap \left(\mathfrak{csp}(\mathfrak{g}_{-1})\right)_{0,0}=\mathfrak g_{0,0} $;
    \item $\left(\mathfrak g_0^{ \mathrm {mod}}\right)_{0,\pm2}=\mathfrak{g}_{0,\pm 2}$, where $\left(\mathfrak g_0^{ \mathrm {mod}}\right)_{0,\pm2}$ stands for the image of the projection to the subspace $\mathfrak {csp}(\mathfrak g_{-1})$ with respect to the splitting \eqref{csp decomposition} of $\mathfrak {csp}(\mathfrak g_{-1})$;
    \item  The subspace $\mathfrak g_0^{\mathrm{mod}}$ is invariant with respect to the involution on $\mathfrak g_{-1}\oplus\mathfrak {csp}(\mathfrak g_{-1})$.
\end{enumerate}
\end{definition}

\begin{definition}\label{regular points in Pzero}
A point $\psi_0\in P^0$ is \emph{regular} (with respect to the Tanaka prolongation) if the maps $\psi\mapsto\dim \mathfrak{g}_k^{\mathrm{mod}}$ are constant in a neighborhood of $\psi_0$ for all $k\geq 0$.
\end{definition}
If we assume that there exists an integer $l\geq 0$ such that the set of points $\psi \in P^0$ with $\mathfrak g_l^{\mathrm{mod}}(\psi)=0$ is generic, then the regularity property in Definition \ref{regular points in Pzero} is also generic.

\begin{remark}
For a 2-nondegenerate CR structure whose Levi form has a rank one kernel such that $\mathfrak{g}_{0,2}$ is generated by an element of rank greater than one, the regularity property in Definition \ref{regular points in Pzero} is generic. 
\end{remark}

Now we formulate the main result of the paper.

\begin{theorem}
\label{maintheorm}
Fix  a CR symbol $\mathfrak g^0$ and a corresponding modified CR symbol $\mathfrak g^{0, \mathrm{mod}}$ so that  its universal Tanaka prolongation $\mathfrak u(\mathfrak g^{0, \mathrm{mod}})$ is finite dimensional.
\begin{enumerate}
\item 
Given a $2$-nondegenerate, hypersurface-type CR structure with symbol $\mathfrak {g}^0$ such that there exists a regular point $\psi_0$ (w.r.t. the Tanaka prolongation) in the bundle  $\Re P^0$ of this structure with $\mathfrak g^{0,\mathrm{mod}}(\psi_0)=\mathfrak g^{0, \mathrm{mod}}$, there exists a bundle over a neighborhood of $\psi_0$ in $\Re P^0$ of dimension equal to $\dim_\C \mathfrak u(\mathfrak g^{0, \mathrm{mod}})$ that admits a canonical absolute parallelism. 

\item
The real dimension of the algebra of infinitesimal symmetries of a $2$-nondegenerate, hypersurface-type CR structure of the previous item is not greater than $\dim_\C \mathfrak u(\mathfrak g^{0, \mathrm{mod}})$.
\end{enumerate}
\end{theorem}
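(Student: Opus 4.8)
The plan is to run a Tanaka-type prolongation starting from the bundle $P^0$, modified to accommodate two nonstandard features present here: the modified symbol $\mathfrak g^{0,\mathrm{mod}}(\psi)$ is not a Lie algebra, and it varies with $\psi\in P^0$ even though the CR symbol is constant. I would therefore follow the microlocal variant of Tanaka's construction from \cite{zelenko2009tanaka} rather than the classical one of \cite{tanaka1}, carrying everything out over the complex contact manifold $\mathcal N$ and restricting to the $\sigma$-invariant part at the end. Recall that $P^0$ carries a canonical $\mathfrak g_-$-valued soldering form (built from the contact filtration of $\mathcal N$ and the adapted-frame structure \eqref{fiberP0}) together with the form $\theta_0$ of \eqref{degree zero soldering form section 4}, but $\theta_0$ is canonical only on the fibers of $\pi\circ\mathrm{pr}$, so that $P^0$ does not yet admit a canonical coframe. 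The aim is to build a tower $P^0\leftarrow P^1\leftarrow\cdots$ in which $P^k\to P^{k-1}$ is, over a point regular in the sense of Definition \ref{regular points in Pzero}, an affine bundle modeled on $\mathfrak g_k^{\mathrm{mod}}$ from \eqref{mgk higher}, and on whose total space a canonical degree-$k$ form $\theta_k$ is defined. Since $\mathfrak u(\mathfrak g^{0,\mathrm{mod}})$ is finite dimensional, $\mathfrak g_l^{\mathrm{mod}}=0$ for some $l$, hence $\mathfrak g_k^{\mathrm{mod}}=0$ for all $k\geq l$; regularity of $\psi_0$ makes each dimension $\dim\mathfrak g_k^{\mathrm{mod}}$ locally constant, so the tower is defined over a neighborhood of $\psi_0$ and stabilizes at step $l-1$. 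On the stabilized bundle the $\mathfrak g_-$-valued soldering form together with $\theta_0,\dots,\theta_{l-1}$ assemble into a $\mathfrak u(\mathfrak g^{0,\mathrm{mod}})$-valued coframe, i.e.\ a canonical absolute parallelism, on a manifold of dimension $\dim\mathfrak g_-+\sum_{k\geq 0}\dim\mathfrak g_k^{\mathrm{mod}}=\dim_\C\mathfrak u(\mathfrak g^{0,\mathrm{mod}})$; its $\sigma$-invariant subbundle is the asserted bundle over a neighborhood of $\psi_0$ in $\Re P^0$, of the same real dimension.

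The heart of the proof is the inductive step. Assuming $P^{k-1}$ has been built together with its partial coframe, in which $\theta_{k-1}$ is pinned down only up to a $\mathfrak g_k^{\mathrm{mod}}$-valued ambiguity, one defines $P^k\to P^{k-1}$ to be the bundle of choices of that ambiguity that normalize a prescribed collection of components of the structure function of the partial coframe. I must verify: (i) such normalizations exist; (ii) at a regular point they form an affine space over $\mathfrak g_k^{\mathrm{mod}}(\psi)$, so that $P^k$ is a genuine affine bundle; and (iii) once the normalization is imposed, $\theta_k$ becomes canonically defined while the lower-degree forms pull back unchanged. In the classical theory (i)--(iii) follow from surjectivity of Spencer-type Lie-algebra-cohomology differentials; here there is no Lie algebra, so I would instead extract them directly from the derivation identity $f([v_1,v_2])=[f(v_1),v_2]+[v_1,f(v_2)]$ that defines $\mathfrak g_1^{\mathrm{mod}}$ and $\mathfrak g_k^{\mathrm{mod}}$ in \eqref{mgk 1}--\eqref{mgk higher}, together with the description of $\mathfrak g_0^{\mathrm{mod}}(\psi)$ in \eqref{theta on 0,2 horizontals}--\eqref{theta on verticals} and the Heisenberg bracket on $\mathfrak g_-$. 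The variation of the symbol is handled by fixing $\mathfrak g^{0,\mathrm{mod}}=\mathfrak g^{0,\mathrm{mod}}(\psi_0)$ as an identifying space and, using regularity, choosing near $\psi_0$ a smooth family of linear isomorphisms $\mathfrak g^{0,\mathrm{mod}}(\psi)\cong\mathfrak g^{0,\mathrm{mod}}$; the structure functions then land in fixed cochain spaces, the construction proceeds uniformly, and the resulting bundle and parallelism are independent of the chosen family up to a gauge absorbed into the structure group. Carrying out (i)--(iii) without the Lie-algebraic cohomology vanishing, while simultaneously tracking this dependence on the base point $\psi$, is the main obstacle.

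For part (2), I would use the functoriality of the entire construction in the CR structure. A local infinitesimal symmetry $X$ of the CR structure complexifies to a holomorphic vector field on $\C M$ preserving $H$ and $K$, hence preserves the Levi foliation and descends to an infinitesimal symmetry of the associated dynamical Legendrian contact structure with involution; this lifts naturally and uniquely to a vector field on $\Re P^0$ preserving its structure, and then up the tower to a vector field $\widetilde X$ on the stabilized bundle preserving its canonical coframe. A vector field preserving an absolute parallelism on a connected manifold is determined by its value at a single point, so the evaluation $X\mapsto\widetilde X(\widetilde\psi_0)$ at a fixed point $\widetilde\psi_0$ over $\psi_0$ is an injective linear map from the algebra of infinitesimal symmetries into the tangent space of the stabilized bundle at $\widetilde\psi_0$; hence that algebra has real dimension at most $\dim_\C\mathfrak u(\mathfrak g^{0,\mathrm{mod}})$.
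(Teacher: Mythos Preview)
Your proposal is essentially the paper's own argument: a microlocal Tanaka prolongation in the spirit of \cite{zelenko2009tanaka}, run over a neighborhood of the regular point $\psi_0$, with an identifying space used to trivialize the varying $\mathfrak g_0^{\mathrm{mod}}(\psi)$, yielding a tower of affine bundles modeled on the $\mathfrak g_k^{\mathrm{mod}}(\psi_0)$ and a canonical coframe once the tower stabilizes; part (2) is the standard evaluation argument.

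Two small refinements worth noting. First, you need not (and the paper does not) abandon Spencer-type operators: the anti-symmetrization maps $\partial_\psi^0$, $\partial_p^1,\ldots$ are precisely what organize the inductive step, only now intertwined with the projection $\mathrm{pr}^{\mathcal I_0}$ coming from the identifying space; the derivation identity in \eqref{mgk 1}--\eqref{mgk higher} is exactly the statement that $\mathfrak g_k^{\mathrm{mod}}$ is the kernel of the relevant $\partial^{k-1}$, and the normalization condition $N_{k-1}$ is a fixed complement to its image. Second, your claim that the parallelism is independent of the chosen family of identifications ``up to a gauge absorbed into the structure group'' is stronger than what is needed or proved: in the paper's sense, ``canonical'' means that the \emph{same fixed} normalization conditions and identifying spaces are applied uniformly to every CR structure of the given type, which suffices for functoriality under symmetries and hence for item (2); no gauge-independence statement is required.
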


The proof of this theorem is obtained via a modification of the approach of \cite{zelenko2009tanaka} to the original Tanaka prolongation procedure for the construction of absolute parallelisms  developed in \cite{tanaka1}.  A sketch of the proof with emphasis of the main modifications is given in Section \ref{Absolute parallelisms}.

Let us clarify the meaning of the term canonical absolute parallelism in the formulation of item (1) of Theorem \ref{maintheorm}. Similar to the standard Tanaka theory in \cite{tanaka1}, wherein a sequence of affine fiber bundles
 \begin{equation}
  \label{bundles original construction}
  \C M\leftarrow P^0\leftarrow P^1\leftarrow P^2\leftarrow\cdots
 \end{equation}
are constructed, we will recursively construct a sequence of bundles $\{P^i\to \mathcal{O}^{i-1}\}_{1\leq i\leq l+\mu}$ such that each base space $\mathcal{O}^{i}$ is a neighborhood in $P^i$ fitting into the sequence of fiber bundles
\begin{equation}
  \label{bundles}
  \C M\leftarrow \mathcal{O}^0\leftarrow \mathcal{O}^1\leftarrow \mathcal{O}^2\leftarrow\cdots.
\end{equation}
Moreover, each of the spaces in \eqref{bundles} have an involution defined on them and by restricting the maps in \eqref{bundles} to their respective fixed point sets one obtains another sequence of fiber bundles
\begin{equation}
  M\leftarrow \Re\mathcal{O}^0\leftarrow \Re\mathcal{O}^1\leftarrow \Re\mathcal{O}^2\leftarrow\cdots.
\end{equation}

The fibers of each $P^i$ are affine spaces with modeling vector space of dimension equal to $\dim \mathfrak g_i^{\mathrm{mod}}(\psi_0)$ from \eqref{mgk higher}.  If the positively graded part of the universal Tanaka prolongation $\mathfrak u(\psi_0)$ of $\mathfrak g^{0, \mathrm{mod}}(\psi_0)$ consists of $l$ nonzero graded components, then all $\mathcal{O}^i$ with $i\geq l$ are identified with each other by the bundle projections, which are diffeomorphisms in those cases. The bundle $\mathcal{O}^{l+2}$ is an $e$-structure over $\mathcal{O}^{l+1}$, which determines a canonical absolute parallelism on $\mathcal{O}^l$ via aforementioned identification between $\mathcal{O}^{l+1}$ and $\mathcal{O}^{l}$. It is important to note that for any $0<i\leq l$ the recursive construction of the bundle $\mathcal{O}^{i+1}$ over $\mathcal{O}^i$ depends on a choice of normalization conditions, as in the standard Tanaka prolongation theory, and also on a choice of identifying spaces, which is a new feature required for the prolongation procedure to work in the presence of nonconstant (modified) symbols. Algebraically, ``normalization condition" refers to a choice of vector space complement to the image of a certain Lie algebra cohomology differential along with identifying spaces that are the choices of complementary subspaces to $\mathfrak g_k^{\mathrm{mod}}$ in the $i$th algebraic Tanaka prolongation of the algebra $\mathfrak g_-\rtimes \mathfrak{csp}(\mathfrak g_{-1})$.  Therefore, the word ``canonical" in item (1) of Theorem \ref{maintheorm} means that for any CR structure of the type referred to in item (1), the same fixed normalization conditions are applied in each step of the construction of the sequence \eqref{bundles}. This also guaranties the preservation the constructed bundles under the action of the group of symmetries of the underlying structure which essentially implies item (2) of the theorem.

Theorem \ref{maintheorm} and \cite[Theorem 3.2]{porter2017absolute} is used in our other recent paper \cite{Sykes2022} to obtain sharp upper bounds on the algebra of infinitesimal symmetries of homogeneous 2-nondegenerate, hypersurface-type CR structures whose Levi kernel has rank 1.

\section{CR structures with constant modified symbols}\label{CR structures with constant modified symbols}

In this section we study the CR structures with constant modified CR symbol $\mathfrak{g}^{0,\mathrm{mod}}$, meaning that $\mathfrak{g}^{0,\mathrm{mod}}(\psi)=\mathfrak{g}^{0,\mathrm{mod}}$ for every $\psi$ in $P^0$.
We prove that in this case the modified CR symbol must be a Lie algebra (Proposition \ref{constmodprop}) and that the CR structure has a regular symbol at every point in the sense of Definition \ref{definition of regular} (Theorem \ref{constant mod symbols implies regular}).

\begin{proposition}
\label{constmodprop}
If a 2-nondegenerate CR structure of hypersurface type  has a constant modified symbol $\mathfrak{g}^{0,\mathrm{mod}}$ then the degree zero component $\mathfrak{g}^{\mathrm{mod}}_0$ is a subalgebra of $\mathfrak{csp}(\mathfrak{g}_{-})$.
\end{proposition}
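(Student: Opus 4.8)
The plan is to restrict to a single leaf of the Levi foliation and recognize the corresponding piece of $P^0$, together with $\theta_0$, as an integral submanifold of a left-invariant distribution on the conformal symplectic group; the conclusion then reduces to the classical fact that an integrable left-invariant distribution on a Lie group is cut out by a Lie subalgebra.

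First I would observe that over a fixed leaf $\pi^{-1}(\gamma)$ the ``lower'' data are rigid: for every $p\in\pi^{-1}(\gamma)$ one has $\mathfrak g_{-1}(p)=\mathcal D_\gamma$ and $\mathfrak g_{-2}(p)=T_\gamma\mathcal N/\mathcal D_\gamma$, so all the Heisenberg algebras $\mathfrak g_-(p)$ coincide with one fixed Heisenberg algebra $\mathfrak g_-(\gamma)$; only the Lagrangian splitting $\mathfrak g_{-1,\pm1}(p)=J^{\mp}_\gamma(p)$ and the velocity subspaces $\mathfrak g_{0,\pm2}(p)\cong\delta^{\pm}(p)$ move with $p$. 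By \eqref{fiberP0} every $\psi\in (P^0)_\gamma:=(\pi\circ\mathrm{pr})^{-1}(\gamma)$ is therefore a grading-preserving Lie algebra isomorphism $\mathfrak g_-\to\mathfrak g_-(\gamma)$, and fixing one base frame $\psi_*\in (P^0)_\gamma$ the assignment $\psi\mapsto\psi_*^{-1}\circ\psi$ identifies $(P^0)_\gamma$ with a submanifold $S_\gamma$ of the group $G$ of grading-preserving automorphisms of $\mathfrak g_-$, which by the discussion after \eqref{CRsymbol_2} is naturally $CSp(\mathfrak g_{-1})$ with Lie algebra $\mathfrak{csp}(\mathfrak g_{-1})=\mathfrak{csp}(\mathfrak g_-)$. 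Comparing \eqref{degree zero soldering form section 4} with the definition of the left Maurer--Cartan form $\omega^G$ of $G$, this identification carries $\theta_0$ to $\omega^G|_{S_\gamma}$: for a curve $\psi(t)$ in $(P^0)_\gamma$ with $\psi(0)=\psi$ and $a(t)=\psi_*^{-1}\circ\psi(t)$ one has $\theta_0(\psi'(0))=\psi(0)^{-1}\psi'(0)=a(0)^{-1}a'(0)=\omega^G_{a(0)}(a'(0))$.

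Next I would invoke the injectivity of $\theta_0$ on fibers and the equality $\dim (P^0)_\gamma=\dim\mathfrak g_0^{\mathrm{mod}}$, both recorded in Section~\ref{bundles on curves in Lagrangian Grassmannians}: the hypothesis that the modified symbol is constant, i.e.\ $\theta_0\big(T_\psi(P^0)_\gamma\big)=\mathfrak g_0^{\mathrm{mod}}$ for all $\psi$, then becomes $T_aS_\gamma=(\omega^G_a)^{-1}(\mathfrak g_0^{\mathrm{mod}})=(L_a)_*\mathfrak g_0^{\mathrm{mod}}$ for every $a\in S_\gamma$, so $S_\gamma$ is an integral submanifold of the left-invariant distribution $\mathcal H$ on $G$ with $\mathcal H_e=\mathfrak g_0^{\mathrm{mod}}$. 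Given $\xi,\eta\in\mathfrak g_0^{\mathrm{mod}}$, the left-invariant fields $X^\xi,X^\eta$ satisfy $X^\xi_a=(L_a)_*\xi\in T_aS_\gamma$ at every $a\in S_\gamma$, hence are tangent to $S_\gamma$, and therefore so is $[X^\xi,X^\eta]=X^{[\xi,\eta]}$; evaluating at any $a\in S_\gamma$ and applying $(L_{a^{-1}})_*$ gives $[\xi,\eta]\in\mathfrak g_0^{\mathrm{mod}}$. Thus $\mathfrak g_0^{\mathrm{mod}}$ is a subalgebra of $\mathfrak{csp}(\mathfrak g_-)$.

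The only point I expect to require care is the first step — checking that over a leaf the elements of $(P^0)_\gamma$ really are automorphisms of one fixed graded Heisenberg algebra and that $\theta_0$ is literally the Maurer--Cartan form in the chosen trivialization; everything afterward is the textbook observation that a left-invariant distribution admitting an integral submanifold is involutive along it and hence, by left-invariance, is cut out by a Lie subalgebra. One could instead argue without groups by trivializing $(P^0)_\gamma$ locally, writing the structure equations for $d\theta_0$, and reading off that constancy of $\theta_0\big(T_\psi(P^0)_\gamma\big)$ forces the $\mathfrak g_0^{\mathrm{mod}}\wedge\mathfrak g_0^{\mathrm{mod}}$-component of $d\theta_0$ to remain inside $\mathfrak g_0^{\mathrm{mod}}$, but the group-theoretic route is shorter.
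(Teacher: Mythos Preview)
Your proof is correct and follows essentially the same approach as the paper: embed the fiber $(P^0)_\gamma$ into $CSp(\mathfrak g_-)$ by composing with a fixed base frame, identify $\theta_0$ with the restriction of the left Maurer--Cartan form, and then use that left-invariant vector fields tangent to the submanifold have tangent bracket. The paper writes the embedding as $\psi\mapsto(\pi_*\circ\varphi_0)^{-1}\circ\pi_*\circ\psi$, making explicit the push-forward through $\pi_*$ that underlies your claim that ``all the Heisenberg algebras $\mathfrak g_-(p)$ coincide with one fixed Heisenberg algebra $\mathfrak g_-(\gamma)$,'' but this is exactly the point you flagged as requiring care, and once it is done the two arguments are identical.
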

\begin{proof}
For a point $p\in M$, let $\gamma=\pi(p)$ and let $\varphi_0\in (P^0)_\gamma$ be fixed. 
The differential of the projection $\pi:\C M\to \mathcal{N}$ induces an isomorphism
\[
\pi_*:\mathfrak{g}_{-1}(p)\oplus \mathfrak{g}_{-2}(p)\to \mathcal{D}_{\gamma}\oplus T_{\gamma}\mathcal{N}/\mathcal{D}_{\gamma}.
\]
The fiber $(P^0)_\gamma$ of $P^0$ can be identified with the submanifold
\[
G:=\left\{(\pi_*\circ \varphi_0)^{-1}\circ\pi_*\circ \psi \,\left|\, \psi\in(P^0)_\lambda\right.\right\}
\]
in the Lie group $CSp(\mathfrak{g}_{-1})$.

If we apply the Maurer--Cartan form $\Omega:T \,CSp(\mathfrak{g}_{-1})\to \mathfrak{csp}(\mathfrak{g}_{-1})$ of $CSp(\mathfrak{g}_{-1})$ to a tangent space of $G$ then its image will be a subspace of $\mathfrak{csp}(\mathfrak{g}_{-1})$, and moreover we can actually show that 
\begin{align}\label{constant Maurer form}
\Omega(T_g G)=\mathfrak{g}_0^{\mathrm{mod}} \quad\quad\forall\, g\in G.
\end{align}

Indeed, let us compute $\Omega(T_g G)$. For a point $g=(\pi_*\circ \varphi_0)^{-1}\circ\pi_*\circ \psi\in G$ and a vector $v\in T_g G$, let $\psi:(-\epsilon,\epsilon)\to (P^0)_\lambda$ be a curve with $\psi(0)=\psi$ such that 
\[
v= (\pi_*\circ \varphi_0)^{-1}\circ\pi_*\left( \psi^\prime(0)\right).
\]
The value of the Maurer--Cartan form $\Omega$ at $v$ is given by
\begin{align}
\Omega(v)&=\left((\pi_*\circ \varphi_0)^{-1}\circ\pi_*\circ \psi\right)^{-1}\circ(\pi_*\circ \varphi_0)^{-1}\circ\pi_*\left( \psi^\prime(0)\right)=\psi^{-1}\psi^{\prime}(0)=\theta_{0}\big(\psi^\prime(0)\big),
\end{align}
so indeed $\Omega(v)$ belongs to $\mathfrak{g}^{\mathrm{mod}}_0$. Equation \eqref{constant Maurer form} now follows because $\dim G=\dim P^0=\dim \mathfrak{g}^{\mathrm{mod}}_0$.

Let us now show that $\mathfrak{g}_{0}^{\mathrm{mod}}$ is a subalgebra of $\mathfrak{csp}(\mathfrak{g}_{-})$. Fix two vectors $v_1,v_2\in \mathfrak{g}_{0}^{\mathrm{mod}}$, and set $V_i=\Omega^{-1}(v_i)$; that is, $V_i$ is the left-invariant vector field on $CSp(\mathfrak{g}_{-1})$ whose value at the identity is $v_i$. Since $V_1$ and $V_2$ are both tangent to $G$ at each point in $G$, the left-invariant vector field $[V_1,V_2]$ is also tangent to $G$ at every point in $G$. In particular, letting $e$ denote the identity element in $CSp(\mathfrak{g}_-)$, we have 
\[
[v_1,v_2]=[V_1,V_2]_e\in T_eG=\mathfrak{g}_{0}^{\mathrm{mod}},
\]
which shows that $\mathfrak{g}_{0}^{\mathrm{mod}}$ is closed under Lie brackets.
\end{proof}

For the remainder of Section \ref{CR structures with constant modified symbols}  our goal is to prove the following theorem.

\begin{theorem}\label{constant mod symbols implies regular}
If a 2-nondegenerate CR structure of hypersurface type   has a constant modified symbol $\mathfrak{g}^{0,\mathrm{mod}}$
then the CR structure has a regular CR symbol in the sense of Definition 
\ref{definition of regular} and the modified symbol equals the CR symbol as described in \eqref{CRsymbol_2}.
\end{theorem}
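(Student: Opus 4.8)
By Proposition~\ref{constmodprop}, the constant degree-zero component $\mathfrak{g}_0^{\mathrm{mod}}$ is a subalgebra of $\mathfrak{csp}(\mathfrak{g}_-)$, and since every element of $\mathfrak{g}_0^{\mathrm{mod}}$ preserves $\mathfrak{g}_{-2}$ (it lies in $\mathfrak{csp}$), it acts on $\mathfrak{g}_{-1}$; so we may regard $\mathfrak{g}_0^{\mathrm{mod}}$ as a subalgebra of $\mathfrak{csp}(\mathfrak{g}_{-1})$. The goal is to show $\mathfrak{g}_0^{\mathrm{mod}}=\mathfrak{g}_0=\mathfrak{g}_{0,-2}\oplus\mathfrak{g}_{0,0}\oplus\mathfrak{g}_{0,2}$ and that this common space is a Lie algebra, which is exactly regularity of the CR symbol (Definition~\ref{definition of regular} and Remark~\ref{regcritrem}). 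My strategy is to exploit the decomposition~\eqref{csp decomposition} of $\mathfrak{csp}(\mathfrak{g}_{-1})$ into weighted components $(\mathfrak{csp})_{0,-2}\oplus(\mathfrak{csp})_{0,0}\oplus(\mathfrak{csp})_{0,2}$ together with the three structural constraints on $\mathfrak{g}_0^{\mathrm{mod}}$ coming from Definition~\ref{abstractmod}, which $\mathfrak{g}_0^{\mathrm{mod}}$ satisfies as a modified CR symbol: its intersection with $(\mathfrak{csp})_{0,0}$ is exactly $\mathfrak{g}_{0,0}$, its projection to $(\mathfrak{csp})_{0,\pm 2}$ is exactly $\mathfrak{g}_{0,\pm 2}$, and $\dim\mathfrak{g}_0^{\mathrm{mod}}=\dim\mathfrak{g}_0$.

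**Key steps.** First I would describe a general element of $\mathfrak{g}_0^{\mathrm{mod}}$. Because the projection to $(\mathfrak{csp})_{0,2}$ surjects onto $\mathfrak{g}_{0,2}$, for each $a\in\mathfrak{g}_{0,2}$ there is an element $\widehat a=a+\phi(a)+\psi(a)\in\mathfrak{g}_0^{\mathrm{mod}}$ with $\phi(a)\in(\mathfrak{csp})_{0,0}$ and $\psi(a)\in(\mathfrak{csp})_{0,-2}=\mathfrak{g}_{0,-2}$; a dimension count using items (1)–(3) of Definition~\ref{abstractmod} forces $\mathfrak{g}_0^{\mathrm{mod}}$ to be precisely the graph, over $\mathfrak{g}_{0,2}\oplus\mathfrak{g}_{0,-2}$, of linear maps into $\mathfrak{g}_{0,0}$, extending $\mathfrak{g}_{0,0}$ itself. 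Second, I would use the fact that $\mathfrak{g}_0^{\mathrm{mod}}$ is a subalgebra together with the involution-invariance (item (4)) to pin down these maps. Bracketing two ``upper'' elements $\widehat a$, $\widehat b$ with $a,b\in\mathfrak{g}_{0,2}$ produces an element whose $(\mathfrak{csp})_{0,2}$-component is $[a,\phi(b)]+[\phi(a),b]$, whose $(\mathfrak{csp})_{0,0}$-component involves $[a,\psi(b)]+[\psi(a),b]+[\phi(a),\phi(b)]$, and whose $(\mathfrak{csp})_{0,-2}$-component vanishes (since $[(\mathfrak{csp})_{0,2},(\mathfrak{csp})_{0,2}]\subset(\mathfrak{csp})_{0,2}\oplus\cdots$ never descends two full steps past $0,2$ in this three-step grading — more precisely $[\,(\mathfrak{csp})_{0,2},(\mathfrak{csp})_{0,2}\,]=0$). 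Requiring that this bracket again lie in the graph gives relations expressing $\phi$ and $\psi$ in terms of brackets of $\mathfrak{g}_{0,\pm2}$ with $\mathfrak{g}_{0,0}$. Third — the crux — I would show these relations are only consistent when $\phi\equiv 0$ and $\psi\equiv 0$: test the subalgebra condition on mixed brackets $[\widehat a,\widehat b']$ with $a\in\mathfrak{g}_{0,2}$, $b'\in\mathfrak{g}_{0,-2}$, whose $(\mathfrak{csp})_{0,0}$-part is $[a,b']+(\text{corrections involving }\phi,\psi)$; closure of $\mathfrak{g}_0^{\mathrm{mod}}$ combined with item (2), $\mathfrak{g}_0^{\mathrm{mod}}\cap(\mathfrak{csp})_{0,0}=\mathfrak{g}_{0,0}$, forces $[a,b']+(\text{corrections})\in\mathfrak{g}_{0,0}$ and, when $\phi=\psi=0$ is not yet assumed, produces an overdetermined system. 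Feeding the consistency conditions back and using that $\mathfrak{g}_{0,0}$ is the \emph{maximal} subalgebra of $\mathfrak{csp}(\mathfrak{g}_{-1})$ normalizing both $\mathfrak{g}_{-1,\pm1}$ and $\mathfrak{g}_{0,\pm2}$ should close the argument: the only graph that is simultaneously a subalgebra, involution-invariant, and compatible with the maximality of $\mathfrak{g}_{0,0}$ is the trivial graph $\phi=\psi=0$, i.e. $\mathfrak{g}_0^{\mathrm{mod}}=\mathfrak{g}_{0,-2}\oplus\mathfrak{g}_{0,0}\oplus\mathfrak{g}_{0,2}=\mathfrak{g}_0$. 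Once $\mathfrak{g}_0^{\mathrm{mod}}=\mathfrak{g}_0$, the subalgebra property from Proposition~\ref{constmodprop} reads $[\mathfrak{g}_{0,-2},\mathfrak{g}_{0,2}]\subset\mathfrak{g}_0\cap(\mathfrak{csp})_{0,0}=\mathfrak{g}_{0,0}$, which is exactly regularity (Definition~\ref{definition of regular}), and then $\mathfrak{g}^{0,\mathrm{mod}}=\mathfrak{g}_-\oplus\mathfrak{g}_0$ coincides with the CR symbol $\mathfrak{g}^0$ of~\eqref{CRsymbol_2}.

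**Main obstacle.** The delicate point is the third step: showing the correction maps $\phi,\psi$ must vanish. A priori a ``skew'' graph could still be closed under brackets, and ruling this out cleanly will require either a direct manipulation of the Jacobi/consistency identities for the candidate structure constants, or a slicker argument. I would most likely look for the slicker route: translate the constancy of $\mathfrak{g}^{0,\mathrm{mod}}$ back to the manifold via the bundle $P^0$ and the principal connection $\theta_0$, using that $\theta_0$ restricted to the $\mathfrak{g}_{0,\pm2}$-horizontal directions is fixed by~\eqref{theta on 0,2 horizontals}–\eqref{theta on 0,-2 horizontals}, and derive a differential identity (a structure-equation / curvature computation for $\theta_0$) forcing the would-be corrections to be exact coboundaries that are then killed by a normalization already built into $P^0$. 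The risk is that the purely algebraic route requires a somewhat involved case analysis depending on $\mathrm{rank}\,K$ and the signature of $\ell$; if so, I would fall back on the rank-one case first (where $\mathfrak{g}_{0,2}=\C A$ and Proposition~\ref{regularity condition for rank-1 K} gives the explicit criterion $A^3\in\C A$) to locate the right identity, then generalize.
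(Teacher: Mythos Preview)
Your overall plan matches the paper's: use Proposition~\ref{constmodprop} to know $\mathfrak g_0^{\mathrm{mod}}$ is a subalgebra, then show $\mathfrak g_0^{\mathrm{mod}}=\mathfrak g_0$, from which regularity is immediate. However, you miss the short argument the paper actually uses, and the elaborate bracket analysis you propose for the ``crux'' step is unnecessary.

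The point you overlook is that $\mathfrak g_{0,0}$ always contains the grading element for the second index of the bigrading, namely the element $E\in\mathfrak{csp}(\mathfrak g_{-1})$ acting as $\tfrac{1}{2}\,\mathrm{id}$ on $\mathfrak g_{-1,1}$ and $-\tfrac{1}{2}\,\mathrm{id}$ on $\mathfrak g_{-1,-1}$ (in the matrix language of \eqref{g00 subalg. representation} this is $\alpha=\tfrac{1}{2}I$, which lies in $\mathscr A$ since $I\in\mathscr A$). Then $\mathrm{ad}\,E$ acts by $+1$ on $(\mathfrak{csp})_{0,2}$, by $0$ on $(\mathfrak{csp})_{0,0}$, and by $-1$ on $(\mathfrak{csp})_{0,-2}$. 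Now a quick dimension count using items (1)--(3) of Definition~\ref{abstractmod} (not just item (3), which is all you invoke) shows one may choose the lifts $\widehat a$ of $a\in\mathfrak g_{0,2}$ with vanishing $(0,-2)$-component, i.e.\ your $\psi\equiv 0$ from the outset; this is exactly how the paper arrives at the form \eqref{Xzero2}. With $\widehat a=a+\phi(a)$ and $E\in\mathfrak g_{0,0}\subset\mathfrak g_0^{\mathrm{mod}}$, the subalgebra property gives $[E,\widehat a]=a\in\mathfrak g_0^{\mathrm{mod}}$, hence $\mathfrak g_{0,2}\subset\mathfrak g_0^{\mathrm{mod}}$ and therefore $\phi(a)=\widehat a - a\in\mathfrak g_0^{\mathrm{mod}}\cap(\mathfrak{csp})_{0,0}=\mathfrak g_{0,0}$. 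The involution then gives $\mathfrak g_{0,-2}\subset\mathfrak g_0^{\mathrm{mod}}$, and $\mathfrak g_0^{\mathrm{mod}}=\mathfrak g_0$ follows. In the paper this appears as the computation with $\alpha=\tfrac12 I$ in items (i)--(ii) of \eqref{system}, yielding $\Omega_i\in\mathscr A$.

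Two smaller remarks. First, your equality $(\mathfrak{csp})_{0,-2}=\mathfrak g_{0,-2}$ is false in general; item (3) only says the \emph{projection} of $\mathfrak g_0^{\mathrm{mod}}$ lands in $\mathfrak g_{0,-2}$, which is what you need. Second, neither the mixed-bracket ``overdetermined system'' nor the curvature/structure-equation route via $\theta_0$ is needed: once you spot the grading element the proof is one line.
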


Before proving this theorem we introduce a matrix representation of $\mathfrak{g}_0^{\mathrm{mod}}$  and describe the conditions for $\mathfrak{g}_0^{\mathrm{mod}}$ to be a Lie subalgebra  of $\mathfrak {csp}(\mathfrak g_{-1})$ in terms of this matrix representation. These conditions will be also used for proving the results on non-existence of certain homogeneous CR structures in Section \ref{reduction_and_nongenericity_section}.

Let $\ell$ be the Hermitian form  on $\mathfrak{g}_{-1,1}$ as in item (1) of  Remark  \ref{symbol_as_pair}.
Still setting $r=\mathrm{rank} K$, set
$m=\mathrm{rank}{H/K}=n-r$,
and let $\{e_i\}_{i=1}^{m}$ be a basis of $\mathfrak{g}_{-1,1}$. Let $\left\{\overline{e_i}\right\}_{i=1}^{m}$ be the vectors obtained via the antilinear involution  on $\mathfrak{g}_{-1}$. Identify $\mathfrak{g}_{-1,1}$ with $\C^{m}$ by identifying  $(e_1,\ldots, e_m)$ with the standard basis of $\C^m $, and let $H_\ell$ be the Hermitian matrix representing $\ell$  with respect to this identification, that is, 
$\ell(e_i,e_j)=e_j^*H_{\ell} e_i$, 
where $(\cdot)^*$ denotes taking the conjugate transpose. We define the basis $(a_1,\ldots, a_{2m}) $ of $\mathfrak{g}_{-1}$  by the rule
\begin{align}\label{canon basis}
a_i=
\begin{cases}
e_i &:\, i\in\{1,\ldots, m\}\\
\overline{e_{i-m+1} }&:\, i\in \{m+1,\ldots,2m\}.
\end{cases}
\end{align}
Let $\omega$ be the symplectic form on $\mathfrak{g}_{-1}$ represented by the matrix
\begin{align}\label{sympl form}
J_\ell:=i\left(
\begin{array}{cc}
0& H_\ell\\
 -H_\ell^T& 0
\end{array}
\right);
\end{align}
that is, identifying $\mathfrak{g}_{-1}$ with $\C^{2m}$  by  identifying  $(a_1,\ldots,a_{2m})$ with the standard basis of $\C^{2m}$, we have
$\omega( a_i, a_j)=  a_j^TJ_\ell a_i$.

This symplectic form $\omega$ represents the conformal symplectic form previously defined on $\mathfrak{g}_{-1}$. By representing operators with respect to the same basis (i.e., as given in  \eqref{canon basis}), we identify the conformal symplectic Lie algebra with a matrix Lie algebra given by
\begin{align}\label{SPinclusion1}
\mathfrak{csp}(\mathfrak{g}_{-1})=
\left\{ 
\left(
\begin{array}{cc}
X_{1,1}& X_{1,2}\\
X_{2,1}& X_{2,2}
\end{array}
\right)+c\mathrm{Id}\,\left|\,
\parbox{7cm}{\flushleft $X_{2,2}=-H_\ell^{-1}X_{1,1}^TH_\ell,\,X_{2,1}=H_\ell^{-1}X_{2,1}^T\overline{H_\ell}$,  $X_{1,2}=\overline{H_\ell}^{-1}X_{1,2}^TH_\ell$, and $c\in\C$}\right.\right\}. 
\end{align}

Let $C_{1},\ldots, C_r$ be matrices such that the spaces $\mathfrak{g}_{0,2}$ and $\mathfrak{g}_{0,-2}$ are spanned by matrices of the form
 \begin{equation}
 \label{matrixg0pm2}
 \left(
\begin{array}{cc}
      0&C_i  \\
      0&0
 \end{array}
 \right)
 \quad\mbox{ and }\quad
 \left(
 \begin{array}{cc}
      0&0  \\
      \overline{C}_i&0
 \end{array}
 \right)\quad\quad\forall\, i\in\{1,\ldots,r\},
 \end{equation}
respectively.
In what follows, we consider the Lie algebras of square matrices $\alpha$ satisfying
\begin{align}\label{firstalgebra}
\alpha C_iH_\ell ^{-1} +   C_iH_\ell ^{-1}\alpha^T \in \mathrm{span}\{C_jH_\ell ^{-1}\}_{j=1}^{r} \quad\forall\,i\in\{1,\ldots,\mathrm r\}
\end{align}
and respectively
\begin{align}\label{secondalgebra}
  \alpha^TH_\ell \overline{C_i}+H_\ell \overline{C_i} \alpha\in \mathrm{span}\{H_\ell \overline{C_j}\}_{j=1}^{r}\quad\forall\,i\in\{1,\ldots,r\},
\end{align}
and we define the algebra $\mathscr{A}$ to be their intersection, that is,
\begin{align}\label{intersection algebra}
\mathscr{A}:=\left\{\alpha\,\left| \,\mbox{$\alpha$ satisfies \eqref{firstalgebra} and \eqref{secondalgebra}}\right.\right\}.
\end{align}

\begin{remark}
\label{regmatrixrem}
Using \eqref{regcriteq}, it can be shown by direct computations that the CR symbol $\mathfrak g^0$ is regular if and only if 
\begin{equation}
\label{regmatrix}
C_i\overline C_j C_k+C_k \overline C_j C_i\in \mathrm{span}_{\C}\,\{C_s\}_{s=1}^r, \quad
\forall i,j,k\in \{1,\ldots, r\},
\end{equation}
where the matrices are as in \eqref{matrixg0pm2}.
\end{remark}

The four properties in Definition \ref{abstractmod} imply that under the identification in \eqref{SPinclusion1}, the space $\mathfrak{g}_0^{\mathrm{mod}}$ has a decomposition $\mathfrak{g}_0^{\mathrm{mod}}=\mathfrak{X}_{0,2}\oplus \mathfrak{g}_{0,0}\oplus \mathfrak{X}_{0,-2}$ such that, for $i\in\{1,\ldots, r\}$ there exist $m\times m$ matrices $\Omega_i$ for which $\mathfrak{X}_{0,2}$ and $\mathfrak{X}_{0,-2}$ are spanned by the matrices
\begin{equation}\label{Xzero2}
\left(
\begin{array}{cc}
 \Omega_{i}& C_{i}\\
 0& -H_\ell^{-1}\Omega_{i}^TH_\ell
\end{array}
\right)
\quad\mbox{ and }\quad
\left(
\begin{array}{cc}
-\overline{H_\ell}^{-1}\Omega_{i}^* \overline{H_\ell}&  0\\
\overline{C_{i}}&  {\overline{\Omega_{i}}}
\end{array}
\right)
\quad\quad\forall\, i\in\{1,\ldots, r\}
\end{equation}
respectively, and, moreover, $\mathfrak{g}_{0,0}$ consists of block diagonal matrices in terms of the block decomposition given in \eqref{SPinclusion1}.
By Proposition \ref{constmodprop}, $\mathfrak{g}_0^{\mathrm{mod}}$ is a Lie subalgebra of $\mathfrak{csp}(\mathfrak{g}_{-1})$, and hence $\mathfrak{X}_0$ is a matrix Lie algebra. In particular,
\begin{align}\label{grading}
[\mathfrak{g}_{0,0},\mathfrak{X}_{0,\pm 2}]\subset  \mathfrak{X}_{0,\pm2}\oplus\mathfrak{g}_{0,0},
\end{align}
and
\begin{align}\label{stronger grading condition}
 [\mathfrak{X}_{0,-2},\mathfrak{X}_{0,2}]\subset \mathfrak{X}_{0,-2}\oplus\mathfrak{X}_{0,2}\oplus\mathfrak{g}_{0,0}.
\end{align}

The following proposition is obtained by straightforward calculation using the identification in \eqref{SPinclusion1} and applying the commutator relations in \eqref{grading} and \eqref{stronger grading condition}.
\begin{proposition}\label{system of conditions for homogeneity lemma}
The modified CR symbol  $\mathfrak g^{0, \mathrm{mod}}$ is a Lie subalgebra of $\mathfrak g_-\rtimes \mathfrak{csp}(\mathfrak{g}_{-1})$ if and only if
there exist coefficients $\eta_{\alpha,i}^s\in\C $ and $\mu_{i,j}^s\in\C$ indexed by $\alpha\in\mathscr{A}$ and $i,j,s\in\{1,\ldots,\mathrm{rank}\,K\}$ such that the  system of relations
\begin{align}\label{system}
\left.\mbox{
\begin{minipage}{.7\textwidth} 
(i)\quad\quad$\displaystyle \,\,\,\, \alpha C_iH_\ell ^{-1} +   C_iH_\ell ^{-1}\alpha^T  = \sum_{s=1}^{r}\eta_{\alpha,i}^s C_sH_\ell^{-1}$\\
(ii)\quad\quad$\displaystyle  [\alpha,\Omega_i]-\sum_{s=1}^{r}\eta_{\alpha,i}^s\Omega_s\in\mathscr{A} $\\
(iii)\quad\quad$\displaystyle  \,\, \, \Omega_j^TH_\ell\overline{C_i}+H_\ell\overline{C_i} \Omega_j=\sum_{s=1}^{r} \mu_{i,j}^s H_\ell\overline{C_s} $\\
(iv)\quad\quad$\displaystyle \left[\overline{H_\ell^{-1}\Omega_i^T H_\ell},\Omega_j \right]+C_j\overline{C_i}-\sum_{s=1}^{r}\left(\overline{\mu_{i,j}^s}\Omega_s +\mu_{j,i}^s \overline{H_\ell^{-1}\Omega_s^T H_\ell}\right) \in\mathscr{A} $\\
\end{minipage}
}
\right\}
\end{align}
holds for all $ \alpha\in\mathscr{A}$ and $i,j\in\{1,\ldots,\mathrm{rank}\,K\}$. Note that condition (i) on its own is satisfied automatically by the definition of $\mathscr{A}$, but satisfying (i) and (ii) simultaneously with the same coefficients $\eta_{\alpha,i}^s$ is not automatic.
\end{proposition}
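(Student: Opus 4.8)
The plan is to translate the statement ``$\mathfrak g^{0,\mathrm{mod}}$ is a Lie subalgebra of $\mathfrak g_-\rtimes\mathfrak{csp}(\mathfrak g_{-1})$'' into a finite list of bracket-closure conditions and then expand each one in the explicit matrix model set up in \eqref{SPinclusion1}, \eqref{matrixg0pm2}, and \eqref{Xzero2}. First I would observe that, since $\mathfrak g_-$ is already a subalgebra and the brackets $[\mathfrak g_-,\mathfrak g_0^{\mathrm{mod}}]\subset\mathfrak g_-$ hold automatically (because $\mathfrak g_0^{\mathrm{mod}}\subset\mathfrak{csp}(\mathfrak g_{-1})$ acts on $\mathfrak g_-$ by derivations, as noted after \eqref{CRsymbol_2}), the only nontrivial requirement is that $\mathfrak g_0^{\mathrm{mod}}$ be closed under the bracket of $\mathfrak{csp}(\mathfrak g_{-1})$. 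By the decomposition $\mathfrak g_0^{\mathrm{mod}}=\mathfrak X_{0,2}\oplus\mathfrak g_{0,0}\oplus\mathfrak X_{0,-2}$ and the already-established partial closure relations \eqref{grading} and \eqref{stronger grading condition}, together with the fact that $\mathfrak g_{0,0}$ is a subalgebra, closure reduces to checking: (a) $[\mathfrak g_{0,0},\mathfrak X_{0,\pm2}]\subset\mathfrak g_0^{\mathrm{mod}}$, and (b) $[\mathfrak X_{0,-2},\mathfrak X_{0,2}]\subset\mathfrak g_0^{\mathrm{mod}}$; and by the involution symmetry (property (4) of Definition \ref{abstractmod}) it suffices to verify the $\mathfrak X_{0,2}$-half of (a) and then (b).

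Next I would carry out the bracket computations componentwise. For (a): an element of $\mathfrak g_{0,0}$ is block-diagonal, say $\mathrm{diag}(\beta,-H_\ell^{-1}\beta^TH_\ell)+cI$ with $\beta$ ranging over the matrix algebra underlying $\mathfrak g_{0,0}$, which (since $\mathfrak g_{0,0}$ preserves $\mathfrak g_{0,\pm2}$) consists precisely of matrices $\beta\in\mathscr A$ for which the structure coefficients exist; bracketing with the generator of $\mathfrak X_{0,2}$ in \eqref{Xzero2} produces an off-diagonal block $\beta C_i-C_i(-H_\ell^{-1}\beta^TH_\ell)=\beta C_i + C_iH_\ell^{-1}\beta^TH_\ell$, which lands in $\mathrm{span}\{C_s\}$ exactly by condition (i)/\eqref{firstalgebra} after right-multiplying by $H_\ell^{-1}$, and a diagonal block $[\beta,\Omega_i]$; demanding that the resulting matrix again be of the shape \eqref{Xzero2} (i.e.\ lies in $\mathfrak X_{0,2}\oplus\mathfrak g_{0,0}$ per \eqref{grading}) forces $[\beta,\Omega_i]-\sum_s\eta^s_{\beta,i}\Omega_s\in\mathscr A$, which is condition (ii). For (b): bracketing the $\mathfrak X_{0,2}$-generator with index $i$ against the $\mathfrak X_{0,-2}$-generator with index $j$ (using the explicit forms in \eqref{Xzero2}) yields a matrix whose upper-right block involves $\Omega_i^TH_\ell\overline{C_j}+H_\ell\overline{C_j}\Omega_i$ up to the correct symplectic bookkeeping, lower-left block the conjugate expression, and diagonal blocks containing $C_j\overline{C_i}$ together with commutator terms $[\overline{H_\ell^{-1}\Omega_i^TH_\ell},\Omega_j]$; matching against $\mathfrak X_{0,-2}\oplus\mathfrak X_{0,2}\oplus\mathfrak g_{0,0}$ via \eqref{stronger grading condition} produces conditions (iii) and (iv), with $\mu^s_{i,j}$ the expansion coefficients. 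Conversely, if coefficients $\eta,\mu$ exist making (i)--(iv) hold, then running these computations in reverse shows every bracket of generators lies in $\mathfrak g_0^{\mathrm{mod}}$, giving the Lie algebra property. The final remark about (i) being automatic is immediate from \eqref{firstalgebra}–\eqref{intersection algebra}, since $\alpha\in\mathscr A$; what is not automatic is that the \emph{same} coefficients $\eta^s_{\alpha,i}$ appearing in (i) also make (ii) hold.

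The main obstacle I anticipate is purely computational bookkeeping: carefully tracking how the conformal symplectic constraints in \eqref{SPinclusion1} (the relations $X_{2,2}=-H_\ell^{-1}X_{1,1}^TH_\ell$ etc.) interact with the off-diagonal blocks $C_i$ and the diagonal blocks $\Omega_i$ when one forms the $2\times2$ block commutators, so that the ``leftover'' terms are correctly identified as belonging (or not) to $\mathscr A$, $\mathfrak X_{0,\pm2}$, or $\mathfrak g_{0,0}$. In particular one must be careful that the conjugate-transpose symmetries built into \eqref{Xzero2} (the entries $-H_\ell^{-1}\Omega_i^TH_\ell$ and $\overline{\Omega_i}$) are preserved under bracketing, which is what guarantees the commutators stay inside $\mathfrak{csp}(\mathfrak g_{-1})$ and lets one read off that the diagonal-block conditions are genuinely conditions \emph{modulo} $\mathscr A$ rather than modulo $\mathfrak{gl}$. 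Since the proposition is stated as ``obtained by straightforward calculation,'' the expectation is that no conceptual difficulty arises once the reductions (a)--(b) and the involution symmetry are used to cut the work in half; the write-up would simply exhibit the four block-identities and note the equivalence.
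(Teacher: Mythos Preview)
Your proposal is correct and matches the paper's approach exactly: the paper states only that the proposition ``is obtained by straightforward calculation using the identification in \eqref{SPinclusion1} and applying the commutator relations in \eqref{grading} and \eqref{stronger grading condition},'' and your outline is precisely that calculation, with the same reduction to block-by-block bracket checks between $\mathfrak g_{0,0}$, $\mathfrak X_{0,2}$, and $\mathfrak X_{0,-2}$.

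One small point worth making explicit in your write-up: your list (a)--(b) omits the brackets $[\mathfrak X_{0,\pm2},\mathfrak X_{0,\pm2}]$. For these, a quick check shows that the upper-right block of $[X_i,X_j]$ (with $X_i,X_j\in\mathfrak X_{0,2}$) satisfies $BH_\ell^{-1}$ antisymmetric by direct computation, while membership in $\mathfrak{csp}(\mathfrak g_{-1})$ forces $BH_\ell^{-1}$ symmetric, so $B=0$ automatically; the remaining diagonal condition $[\Omega_i,\Omega_j]\in\mathscr A$ then needs to be accounted for. The paper does not single this out either, so you are in good company, but it is worth a sentence so the ``if'' direction is airtight.
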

\begin{remark}
Under the identification in \eqref{SPinclusion1},  
\begin{align}\label{g00 subalg. representation}
\mathfrak{g}_{0,0}=\text{span}_{\C}
\left\{\left.
\left(
\begin{array}{cc}
\alpha & 0\\
0 & -H_\ell^{-1} \alpha^T H_\ell
\end{array}
\right)+c\mathrm{Id}\, \right|\, \alpha\in \mathscr{A}
\mbox{ and }c\in\C\right\}.
\end{align}
\end{remark}

Now we are ready to prove Theorem \ref{constant mod symbols implies regular}.
Since $\mathrm{Id}$ belongs to $\mathscr{A}$, by setting $\alpha=\cfrac{1}{2}\mathrm{Id}$, we get $\eta_{\alpha,i}^s= \delta_{i,s}$  from item (i) of \eqref{system}, where $\delta_{i,s}$ denotes the Kronecker symbol. Substituting this $\alpha$ and the corresponding $\eta_{\alpha,i}^s$ into the equation in item (ii) of \eqref{system}, we get that  $\Omega_i\in\mathscr{A}$ for all $i\in\{1,\ldots,r\}$. Then subtracting the matrix of the form appearing in \eqref{g00 subalg. representation} with $\alpha=\Omega_i$ from the matrices appearing in \eqref{Xzero2} as the generators of $\mathfrak X_{0, 2}$ and using \eqref{matrixg0pm2}, we get that the space $\mathfrak g_{0,2}$ belongs to $\mathfrak g_0^{\mathrm{mod}}$. A similar argument implies that $\mathfrak g_{0,-2}$ belongs to $\mathfrak g_0^{\mathrm{mod}}$, and therefore $\mathfrak g_0=\mathfrak g_0^{\mathrm{mod}}$. Accordingly, by Proposition \ref{constmodprop}, if a CR structure satisfies the hypothesis of Theorem \ref{constant mod symbols implies regular} then $\mathfrak g_0$ is a subalgebra of $\mathfrak{csp}(\mathfrak g_{-1})$ and therefore $\mathfrak g^0$ is a regular symbol. This completes the proof of Theorem \ref{constant mod symbols implies regular}.

\section{Reduction to level sets of modified symbols}
\label{reduction_and_nongenericity_section} 
According to Theorem \ref{constant mod symbols implies regular}, a CR structure with a nonregular symbol cannot have a constant 
modified symbol on $P^0$. Consequently, for such structures the upper bound for the algebra of infinitesimal symmetries  given in  Theorem \ref{maintheorm} is far from being sharp in the case of nonregular $\mathfrak g^0$ and can be improved under appropriate natural assumptions.  The standard way to deal with structures with nonconstant invariants (e.g., the modified CR symbol in our case)  is to make a reduction to the level set of these invariants.

In more detail, given an  abstract modified CR symbol $\mathfrak g^{0, \mathrm {mod}}$ of type $\mathfrak g^0$ the set $P^0(\mathfrak g^{0, \mathrm {mod}})$ consisting of all $\psi\in P^0$ such that $\mathfrak g^{0, \mathrm {mod}}(\psi)=\mathfrak g^{0, \mathrm {mod}}$ is called \emph{the level set of $\mathfrak g^{0, \mathrm {mod}}$} in $P^0$. Assume that  $P^0(\mathfrak g^{0, \mathrm {mod}})$ is a smooth submanifold of $P^0$ such that 
\begin{equation}
\label{onto condition}
\C M=\mathrm{pr}\left(P^0(\mathfrak g^{0, \mathrm {mod}})\right).
\end{equation}

The condition in \eqref{onto condition} is motivated by the study of homogeneous CR manifolds, that is, CR manifolds whose groups of symmetries act transitively. If $\theta_0\left(T_\psi (P^0(\mathfrak g^{0, \mathrm {mod}}))_\gamma\right)$ 
is the same subspace $\widetilde {\mathfrak g^{0, \mathrm {mod}}}$ for all $\psi\in P^0(\mathfrak g^{0, \mathrm {mod}})$, then we say that $P^0(\mathfrak g^{0, \mathrm {mod}})$ is a \emph{reduction of $P^0$ with constant reduced modified symbol $\widetilde {\mathfrak g^{0, \mathrm {mod}}}$.}

If, on the other hand, $\theta_0\left(T_\psi P^0(\mathfrak g^{0, \mathrm {mod}})\right)$ is not constant on $P^0(\mathfrak g^{0, \mathrm {mod}})$, then we can repeat the process of restriction to a level set. If the chosen level set projects onto a set containing $\C M$ and the image of the tangent spaces to it under $\theta_0$ is a fixed subspace of $\mathfrak{csp}(\mathfrak g_{-1})$ independently  of a point of the level set, then we again obtain the reduction of $P^0$  with constant reduced modified symbol (after two steps of reduction), and if not then we can repeat the process again. In this way, at least in the homogeneous case, after a finite number of steps we  will arrive to a submanifold $P^{0,\mathrm{red}}$ of $P^0$ that projects onto $\C M$, and such that the tangent spaces to it are mapped under $\theta_0$ is a fixed subspace $\mathfrak g^{0,\mathrm{red}}$ of $\mathfrak {csp}(\mathfrak g_{-1})$.  Also, at least in the homogeneous case, in every step of this reduction procedure the level set can be chosen so that it has a nonempty intersection with $\Re P^0$ and hence $\mathfrak g^{0,\mathrm{red}}$ will inherit an involution from the involution defined on $\mathfrak g^{0, \mathrm {mod}}(\psi)$ for any $\psi\in \Re P^0\cap P^{0,\mathrm{red}}$. In this case we will say that the bundle  $P^0$ associated with the CR structure admits a reduction with constant reduced modified symbol $\mathfrak g^{0,\mathrm{red}}$.

Note that the  subspace $\mathfrak g^{0,\mathrm{red}}$ must be a Lie subalgebra of  $\mathfrak {csp}(\mathfrak g_{-1})$ by literally the same arguments as in the proof of Proposition \ref{constmodprop}. These constructions motivate the following definition, which generalizes Definition \ref{abstractmod}.
\begin{definition}
\label{abstractred}
Fix a CR symbol $\mathfrak g^0=\mathfrak{g}_{-}\oplus \mathfrak{g}_{0,-2}\oplus \mathfrak{g}_{0,0}\oplus \mathfrak{g}_{0,2}$ of the form in \eqref{CRsymbol_2}. A Lie subalgebra   $\mathfrak g^{0, \mathrm{red}}= \mathfrak g_-\oplus \mathfrak g_0^{ \mathrm {red}}$ of $\mathfrak g_- \rtimes\mathfrak {csp}(\mathfrak g_{-1}) $ is called an abstract reduced modified CR symbol (with involution) of type $\mathfrak g^0$ if the following properties hold:
\begin{enumerate}
    \item $\dim \mathfrak{g}^{\mathrm{red}}_{0}=\dim\left(\mathfrak g_0^{\mathrm{red}}\cap \left(\mathfrak{csp}(\mathfrak{g}_{-1})\right)_{0,0}\right)+2\dim \mathfrak g_{0,2}\leq \dim \mathfrak{g}_{0}$;
    \item $\mathfrak g_0^{\mathrm{red}}\cap \left(\mathfrak{csp}(\mathfrak{g}_{-1})\right)_{0,0}\subset\mathfrak g_{0,0} $;
    \item $\left(\mathfrak g_0^{ \mathrm {red}}\right)_{0,\pm2}=\mathfrak{g}_{0,\pm 2}$, where $\left(\mathfrak g_0^{ \mathrm {red}}\right)_{0,\pm2}$ stands for the image under the projection to the subspace $\left(\mathfrak {csp}(\mathfrak g_{-1})\right)_{0,\pm2}$ with respect to the splitting \eqref{csp decomposition} of $\mathfrak {csp}(\mathfrak g_{-1})$;
    \item  The subspace $\mathfrak g_0^{\mathrm{red}}$ is invariant with respect to the involution on $\mathfrak g_{-1}\oplus\mathfrak {csp}(\mathfrak g_{-1})$.
\end{enumerate}
\end{definition}

To any abstract reduced modified symbol  $\mathfrak g^{0,\mathrm{red}}$ that is a Lie algebra, we construct  corresponding special homogeneous CR structures as follows.  Set $\mathfrak g_{0,0}^{\mathrm{red}}:= \mathfrak g_0^{\mathrm{red}}\cap \left(\mathfrak{csp}(\mathfrak{g}_{-1})\right)_{0,0}$. Denote by $G^{0,\mathrm{red}}$ and $G_{0,0}^{\mathrm{red}}$ connected Lie groups with Lie algebras $\mathfrak  g^{0,\mathrm{red}}$ and $\mathfrak g_{0,0}^{\mathrm{red}}$, respectively, such that $G_{0,0}^{\mathrm{red}}\subset G^{0,\mathrm{red}}$, and denote by  $\Re G^{0,\mathrm{red}}$ and $\Re G_{0,0}^{\mathrm{red}}$ the corresponding real parts with respect to the involution on $\mathfrak g^{0,\mathrm{red}}$, meaning that $\Re G^{0,\mathrm{red}}$ and $\Re G_{0,0}^{\mathrm{red}}$ are the maximal subgroups of $ G^{0,\mathrm{red}}$ and $ G_{0,0}^{\mathrm{red}}$ whose tangent spaces belong to the left translations of the fixed point set of the involution on $\mathfrak g^{0,\mathrm{red}}$ and on $\mathfrak g_{0,0}^{\mathrm{red}}$ respectively.

Let $ M_0^{\C} = G^{0, \mathrm{red}}/ G_{0,0}^{\mathrm{red}}$ and $M_0= \Re G^{0,  \mathrm{red}}/\Re G_{0,0}^{ \mathrm{red}}$. In both cases here we  use left cosets. For every pair $(i,j)$ with $i<0$, let $\widehat D_{i,j}^{\mathrm{flat}}$  be the left-invariant distribution on $G^{0,  \mathrm{red}}$ such that it is equal to $\mathfrak{g}_{i,j}$ at the identity. Also, for $j=\pm 2$, let $\widehat D_{0, \pm 2}^{\mathrm{flat}}$ be the left-invariant distributions equal to 
\begin{align}\label{reduced modified signed components}
\mathfrak  g_0^{\mathrm{red}}\cap \left(\left(\mathfrak{csp}(\mathfrak{g}_{-1})\right)_{0,0}\oplus \left(\mathfrak{csp}(\mathfrak{g}_{-1})\right)_{0,j} \right)
\end{align}
at the identity. Since all $\mathfrak{g}_{-1,j}$ as well as the spaces in \eqref{reduced modified signed components} are invariant under the adjoint action of $G_{0,0}^{\mathrm{red}}$, the push-forward of each $\widehat D_{i,j}^{\mathrm{flat}}$ to $M_0^\mathbb C$  is a well defined distribution, which we denote by $D^{\mathrm{flat}}_{i,j}$. Let $D_{-1}^{\mathrm{flat}}$ be the distribution which is the sum of $D_{i,j}^{\mathrm{flat}}$ with $i=-1$. We restrict all of these distributions to $M_0$, considering them as subbundles of the complexified tangent bundle of $M_0$. The distribution $ H^{\mathrm{flat}}:=D_{-1,1}^{\mathrm{flat}}\oplus D_{0,2}^{\mathrm{flat}}$ defines a CR structure of hypersurface type, and, by construction, the corresponding bundle  $P^0$ associated with this CR structure admits a reduction with constant reduced modified symbol $\mathfrak g^{0,\mathrm{red}}$. The structure $H^{\mathrm{flat}}$ on the constructed homogeneous model $M_0$ is called the \emph{flat CR structure with constant reduced modified symbol $ \mathfrak{g}^{0,\mathrm{red}}$}.
\begin{theorem}
\label{maintheorconst} Assume that for a given a CR symbol $\mathfrak g^0$ there exists a reduced modified symbol $\mathfrak g^{0, \mathrm{red}}$ of type $\mathfrak g^0$ with finite dimensional Tanaka prolongation. Then the following three statements hold.
\begin{enumerate}
\item 
Given a $2$-nondegenerate, hypersurface-type CR structure such that the corresponding bundle $P^0$ contains a subbundle $P^{0,\mathrm{red}}$ with the reduced  modified symbol  $\mathfrak g^{0, \mathrm{red}}$, there exists a bundle over $\Re P^{0,\mathrm{red}}:=P^{0,\mathrm{red}}\cap\Re P^{0}$ of dimension equal to $\dim_\C\mathfrak u(\mathfrak g^{0, \mathrm{red}})$ that admits a canonical absolute parallelism;
\item The dimension of the algebra of infinitesimal symmetries of a $2$-nondegenerate, hypersurface-type CR structure of item (1) is not greater than $\dim_\C \mathfrak u(\mathfrak g^{0, \mathrm{red}})$. Moreover, 
if we assume that the CR symbol $\mathfrak g^0$ is recoverable then the  algebra of infinitesimal symmetries of the flat CR structure with constant reduced modified symbol  $\mathfrak g^{0, \mathrm{red}}$ is isomorphic to the real part $\Re \mathfrak u(\mathfrak g^{0, \mathrm{red}})$ of $\mathfrak u(\mathfrak g^{0, \mathrm{red}})$;
\item
If  the CR symbol $\mathfrak g^0$ is recoverable then any CR structure with the constant reduced modified symbol  $\mathfrak g^{0, \mathrm{red}}$ whose algebra of infinitesimal symmetries has real dimension equal to $\dim_\C \mathfrak u(\mathfrak g^{0, \mathrm{red}})$  is locally to equivalent to the flat CR structure with reduced modified symbol  $\mathfrak g^{0, \mathrm{red}}$.
\end{enumerate}
\end{theorem}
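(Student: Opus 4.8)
The plan is to prove the three statements in sequence, with the real work concentrated in item (2), from which item (3) will follow by a rigidity argument. For item (1), the construction is the one announced after the statement of Theorem \ref{maintheorm}: starting from the subbundle $P^{0,\mathrm{red}}$, whose tangent spaces map under $\theta_0$ to the fixed subspace $\mathfrak g_0^{\mathrm{red}}\subset\mathfrak{csp}(\mathfrak g_{-1})$, I would run the microlocal version of the Tanaka prolongation procedure of Section \ref{Absolute parallelisms}. The key point making this work here is that, as remarked just before Definition \ref{abstractred}, $\mathfrak g^{0,\mathrm{red}}$ is genuinely a Lie subalgebra of $\mathfrak g_-\rtimes\mathfrak{csp}(\mathfrak g_{-1})$ — the argument of Proposition \ref{constmodprop} applies verbatim — so the prolongation $\mathfrak u(\mathfrak g^{0,\mathrm{red}})$ is the ordinary (universal) Tanaka prolongation of a graded Lie algebra, and the only nonstandard ingredient is that one prolongs over the reduced bundle rather than $P^0$ itself. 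Since $\mathfrak g^{0,\mathrm{red}}$ carries the induced involution, restricting each $\mathcal O^i$ to the fixed-point set of its involution yields the real bundle over $\Re P^{0,\mathrm{red}}$, which by construction has dimension $\dim_\C\mathfrak u(\mathfrak g^{0,\mathrm{red}})$ and an $e$-structure.

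For the first assertion of item (2), the bound on the symmetry algebra is the standard consequence of item (1): every infinitesimal symmetry of the CR structure lifts to an infinitesimal symmetry of the canonical $e$-structure on the terminal bundle, which injects into the tangent space of a fiber of that bundle over a point, giving real dimension at most $\dim_\C\mathfrak u(\mathfrak g^{0,\mathrm{red}})$. The substantive part of item (2) is the claim that, \emph{when $\mathfrak g^0$ is recoverable}, the flat CR structure $H^{\mathrm{flat}}$ on $M_0=\Re G^{0,\mathrm{red}}/\Re G_{0,0}^{\mathrm{red}}$ achieves equality with symmetry algebra $\Re\mathfrak u(\mathfrak g^{0,\mathrm{red}})$. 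Here is where recoverability enters crucially. On the complexified homogeneous model $M_0^\C=G^{0,\mathrm{red}}/G_{0,0}^{\mathrm{red}}$ the left-invariant distributions $D_{i,j}^{\mathrm{flat}}$ build the dynamical Legendrian contact structure, and the whole of $\mathfrak u(\mathfrak g^{0,\mathrm{red}})$ acts on $M_0^\C$ by infinitesimal automorphisms of that flag structure — this is the classical fact that the full Tanaka prolongation of the symbol of a flat (left-invariant) structure acts as symmetries, realized concretely because $\mathfrak u$ is a graded Lie algebra with $\mathfrak g^{0,\mathrm{red}}$ as nonnegative part. But a priori these are symmetries of the Legendrian contact structure, not of the CR structure $H^{\mathrm{flat}}$ itself. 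Recoverability of $\mathfrak g^0$ — equivalently, via Proposition \ref{prolK1}, the vanishing of $(\mathfrak g_{0,2})_{(1)}$ — together with Remark \ref{recover_rem} and Remark \ref{reformulations}, guarantees that $H$ is the \emph{unique} involutive rank-$n$ subdistribution of $H+\overline K$ transversal to $\overline K$ and containing $K$; hence any automorphism of the associated dynamical Legendrian contact structure must preserve $H^{\mathrm{flat}}$. So $\Re\mathfrak u(\mathfrak g^{0,\mathrm{red}})$ embeds in the symmetry algebra of $H^{\mathrm{flat}}$, and combined with the upper bound this forces equality. I expect this identification of automorphisms of the flag structure with CR automorphisms, and the verification that all of $\mathfrak u$ acts effectively on $M_0^\C$, to be the main obstacle — one must check carefully that no part of the prolongation acts trivially, using the Tanaka-theoretic nondegeneracy built into the recursive definition \eqref{mgk higher} of $\mathfrak g_k^{\mathrm{mod}}$.

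Finally, for item (3), the plan is a standard uniqueness-of-maximally-symmetric-model argument. Let $H$ be any CR structure with constant reduced modified symbol $\mathfrak g^{0,\mathrm{red}}$ whose symmetry algebra has real dimension exactly $\dim_\C\mathfrak u(\mathfrak g^{0,\mathrm{red}})$. Running the construction of item (1) produces the canonical $e$-structure on a bundle $\mathcal E$ over $\Re P^{0,\mathrm{red}}$ of that dimension; the hypothesis on the symmetry algebra forces the structure functions of this $e$-structure to be constant (the symmetry algebra acts on $\mathcal E$ with an open orbit by the dimension count, hence transitively after shrinking, and a homogeneous $e$-structure has constant structure equations), so $\mathcal E$ is locally a Lie group and the $e$-structure is the Maurer--Cartan form. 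The same computation applied to $H^{\mathrm{flat}}$ yields the Maurer--Cartan form of $\Re\mathfrak u(\mathfrak g^{0,\mathrm{red}})$ with the same constants — because the normalization conditions chosen in the canonical construction are fixed once and for all, and the reduced modified symbols agree. Therefore the two $e$-structures are locally isomorphic, and since the CR structures can be reconstructed canonically from their terminal $e$-structures (tracing back down the tower \eqref{bundles} and then invoking recoverability via \eqref{recov_eq} to pass from the dynamical Legendrian contact structure back to $H$), $H$ is locally equivalent to $H^{\mathrm{flat}}$. The only delicate point is again that the reconstruction of the CR structure from the flag data at the bottom of the tower requires recoverability, which is exactly the standing hypothesis of item (3).
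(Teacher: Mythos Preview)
Your treatment of items (1) and (2) matches the paper's: both follow from standard Tanaka theory applied to the reduced bundle $P^{0,\mathrm{red}}$, whose symbol $\mathfrak g^{0,\mathrm{red}}$ is a genuine graded Lie algebra, so no microlocal modifications are needed. Your more explicit discussion of why recoverability is required in the ``Moreover'' part of (2) --- to transfer symmetries of the dynamical Legendrian contact structure back to CR symmetries --- is a correct unpacking of what the paper leaves implicit.

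For item (3), however, your argument has a real gap, and the paper closes it by a different mechanism. You argue: maximal symmetry forces the structure functions of the canonical $e$-structure on $\mathcal E$ to be constant, hence $\mathcal E$ is locally a Lie group, and then you claim the structure constants agree with those of the flat model ``because the normalization conditions chosen in the canonical construction are fixed once and for all, and the reduced modified symbols agree.'' This last inference is not justified. The shared normalization conditions and symbol only pin down the \emph{graded} part of the structure equations; the curvature components, which take values in the normalization spaces $N_i$, can be nonzero constants for a homogeneous structure. What you actually obtain is that the Lie algebra of $\mathcal E$ is a \emph{filtered} Lie algebra whose associated graded is $\mathfrak u(\mathfrak g^{0,\mathrm{red}})$, and you still need to show this filtered algebra is isomorphic to its graded.

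The paper supplies exactly this missing step: it observes that $\mathfrak g^{0,\mathrm{red}}$ always contains the grading element $E$ (the generator of the dilation flow $\psi(t)$ on fibers of $P^0$, which is tangent to every level set of modified symbols), and then invokes \cite[Lemma 3]{doubkom} --- the presence of a grading element in degree zero forces any filtered deformation of $\mathfrak u(\mathfrak g^{0,\mathrm{red}})$ to split, so the symmetry algebra is isomorphic to $\mathfrak u(\mathfrak g^{0,\mathrm{red}})$ itself, yielding local equivalence to the flat model. Your framework can be repaired by inserting this grading-element argument at the point where you assert the structure constants coincide.
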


Items (1) and (2) of this theorem follow from the standard Tanaka theory for constant symbols \cite{tanaka1} applied, after the reduction of the bundle $P^0$, to the bundle $P^{0, \mathrm{red}}$.
Item (3) follows from the fact that $\mathfrak{u}(\mathfrak g^{0, \mathrm{red}})$ always contains the grading element $E$; that is, the element  such that $[E, x] =i x$, for $x\in \mathfrak{g}_i$ with $i\in\{-1, -2\}$, which also implies that $[E, x]=0$ for $x\in \mathfrak g_0^{ \mathrm{red}}$. This grading element is the generator of the natural family of dilations on the fibers of $P^0$. Explicitly, given a point $\psi_0\in P^0$, the grading element in $\mathfrak g^{0, \mathrm{red}}$ is the velocity at $t=0$ of the curve $\psi(t)$ such that $\left.\psi(t)\right|_{\mathfrak{g}_{-2}}=\left.\psi(t)\right|_{\mathfrak{g}_{-2}}=e^{2t}\left.\psi_0\right|_{\mathfrak{g}_{-2}}$ and $\left.\psi(t)\right|_{\mathfrak{g}_{-1}}=e^t \left.\psi_0\right|_{\mathfrak{g}_{-1}}$.
The fact that the grading element is tangent to the level sets after reduction follows from the fact that this orbit $\psi(t)$ belongs to the same level set of modified CR symbols. 
Further, the algebra of infinitesimal symmetries of a CR structure has a natural filtration such that, in the case where the dimension of this algebra is equal to $\dim_\C \mathfrak u(\mathfrak g^{0, \mathrm{red}})$, the associated graded algebra is isomorphic to $\mathfrak{u}(\mathfrak g^{0, \mathrm{red}})$. The existence of this grading element in the reduced symbol implies that the filtered algebra of infinitesimal symmetries is isomorphic to its associated graded algebra (considered as filtered algebras) (see \cite[Lemma 3]{doubkom}), that is, to $\mathfrak u(\mathfrak g^{0, \mathrm{red}})$, which implies that the CR structure is locally equivalent to the flat one. 

\begin{remark}
Concerning the base space of the bundle on which the parallelisms of Theorems \ref{maintheorm} and \ref{maintheorconst} are defined, we construct them through application of a Tanaka prolongation procedure to the induced DLC structure on the Levi leaf space $\mathcal{N}$, and to match classical Tanaka theory, one should regard the total space on which the parallelism is defined as bundle with base space $\mathcal{N}$. Its projection to $\mathcal{N}$, however, naturally factors through a projection to $\mathbb{C} M$, so it is also reasonable to alternatively regard it as a bundle over $\mathbb{C} M$. If the considered CR structures are recoverable then their local equivalence problem is reduced to that of their associate DLC structures, and the canonical parallelisms constructed on bundles over $\mathcal{N}$ naturally give canonical parallelisms on bundles over $M$. Regardless of recoverability the CR structure's symmetry group has a naturally induced faithful action on the parallelisms of Theorems \ref{maintheorm} and \ref{maintheorconst}.
\end{remark}

We conclude this section with the following corollary, which relates our Theorem \ref{maintheorconst} with the main theorem of \cite[Theorem 3.2]{porter2017absolute}.
\begin{corollary}
If the CR symbol $\mathfrak g^0$ is regular and recoverable then its usual Tanaka prolongation and the bigraded Tanaka prolongation defined in \cite[section 3]{porter2017absolute} coincide.
\end{corollary}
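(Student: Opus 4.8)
The plan is to deduce the equality from Theorem \ref{maintheorconst}, applied to the flat model attached to $\mathfrak g^0$ itself, together with \cite[Theorem 3.2]{porter2017absolute} and one elementary inclusion of prolongations. Throughout write $\mathfrak u(\mathfrak g^0)$ for the universal Tanaka prolongation of $\mathfrak g^0$ defined by \eqref{mgk 1}--\eqref{universal prolongation definition} --- which, since regularity of $\mathfrak g^0$ means $\mathfrak g^0$ is a graded Lie algebra, is the ordinary Tanaka prolongation meant by ``usual Tanaka prolongation'' in the corollary --- and write $\mathfrak t(\mathfrak g^0)=\bigoplus_{k\ge 0}\bigoplus_l\mathfrak t_{k,l}$ for the bigraded Tanaka prolongation of \cite[section 3]{porter2017absolute}, indexed so that the first index is the Tanaka degree. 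The first point, valid for any CR symbol, is that $\mathfrak t_k(\mathfrak g^0)\subseteq\mathfrak u_k(\mathfrak g^0)$ for every $k$: both positive parts are built by the same recursion, the degree-$k$ piece being the degree-$k$ derivations of $\mathfrak g_-$ valued in the already-constructed components, the only difference being that the bigraded procedure additionally demands compatibility with the second $\Z$-grading; since $\mathfrak t_{i,\cdot}=\mathfrak g_{i,\cdot}$ for $i\le 0$ and, inductively, $\bigoplus_l\mathfrak t_{i,l}\subseteq\mathfrak u_i$ for $i<k$, any element of $\mathfrak t_{k,l}$ is in particular a degree-$k$ derivation of $\mathfrak g_-$ into $\mathfrak u$ and hence lies in $\mathfrak u_k$. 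So $\dim_\C\mathfrak t(\mathfrak g^0)\le\dim_\C\mathfrak u(\mathfrak g^0)$, and it remains to prove the reverse inequality. (In degree $1$ the inclusion is strict precisely when the Spencer first prolongations $(\mathfrak g_{0,\pm2})_{(1)}$ are nonzero, which by Proposition \ref{prolK1} is exactly the failure of recoverability; this is the mechanism behind the corollary, though the dimension count below does not invoke it directly.)

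Assume now that $\mathfrak g^0$ is regular and recoverable, and --- as is needed to invoke Theorem \ref{maintheorconst} --- that $\mathfrak u(\mathfrak g^0)$ is finite dimensional. Since $\mathfrak g^0$ is a Lie algebra, $\mathfrak g^{0,\mathrm{red}}:=\mathfrak g^0$, with $\mathfrak g_0^{\mathrm{red}}:=\mathfrak g_{0,-2}\oplus\mathfrak g_{0,0}\oplus\mathfrak g_{0,2}$, satisfies Definition \ref{abstractred}: properties (2)--(4) are clear, and property (1) holds with equality since $\dim\bigl(\mathfrak g_0^{\mathrm{red}}\cap(\mathfrak{csp}(\mathfrak g_{-1}))_{0,0}\bigr)+2\dim\mathfrak g_{0,2}=\dim\mathfrak g_{0,0}+2\dim\mathfrak g_{0,2}=\dim\mathfrak g_0$. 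Form the flat CR structure $H^{\mathrm{flat}}$ with constant reduced modified symbol $\mathfrak g^{0,\mathrm{red}}=\mathfrak g^0$ of Section \ref{reduction_and_nongenericity_section}; by construction it is a $2$-nondegenerate hypersurface-type CR structure with CR symbol $\mathfrak g^0$ whose bundle $P^0$ contains a subbundle with constant reduced modified symbol $\mathfrak g^0$. By item (2) of Theorem \ref{maintheorconst} and the recoverability of $\mathfrak g^0$, the algebra of infinitesimal symmetries of $H^{\mathrm{flat}}$ is isomorphic to $\Re\mathfrak u(\mathfrak g^0)$, which --- $\mathfrak u(\mathfrak g^0)$ carrying the antilinear involution induced by $\sigma$ --- has real dimension $\dim_\C\mathfrak u(\mathfrak g^0)$. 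On the other hand $H^{\mathrm{flat}}$ has the regular symbol $\mathfrak g^0$, so \cite[Theorem 3.2]{porter2017absolute} produces an absolute parallelism over a bundle of dimension $\dim_\C\mathfrak t(\mathfrak g^0)$, bounding the real dimension of the symmetry algebra of $H^{\mathrm{flat}}$ by $\dim_\C\mathfrak t(\mathfrak g^0)$. Hence $\dim_\C\mathfrak u(\mathfrak g^0)\le\dim_\C\mathfrak t(\mathfrak g^0)$, which together with the inclusion of the previous paragraph yields $\dim_\C\mathfrak t(\mathfrak g^0)=\dim_\C\mathfrak u(\mathfrak g^0)$ and therefore $\mathfrak t(\mathfrak g^0)=\mathfrak u(\mathfrak g^0)$.

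The step I expect to demand the most care is the inclusion $\mathfrak t(\mathfrak g^0)\subseteq\mathfrak u(\mathfrak g^0)$: conceptually it only says that a derivation respecting a refinement of a grading respects the coarser grading, but it has to be matched against the precise recursive construction in \cite[section 3]{porter2017absolute}, confirming that the first index there is indeed the Tanaka degree and that the bigraded recursion introduces no component outside $\mathfrak u(\mathfrak g^0)$. A secondary point is the finiteness hypothesis used to upgrade equality of dimensions to equality of spaces; if $\mathfrak u(\mathfrak g^0)$ is infinite dimensional one argues instead degree by degree, starting from the equality $\mathfrak u_1=\mathfrak t_1$ supplied by recoverability through Proposition \ref{prolK1} and propagating it to higher degrees using regularity.
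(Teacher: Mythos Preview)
Your proof is correct and follows essentially the same route as the paper's: both apply Theorem \ref{maintheorconst} and \cite[Theorem 3.2]{porter2017absolute} to the flat model with symbol $\mathfrak g^0$ and compare what each says about its infinitesimal symmetry algebra. The only difference is that the paper invokes the equality statement from item (2) of \emph{both} theorems (so that the symmetry algebra is simultaneously $\Re\mathfrak u(\mathfrak g^0)$ and the real part of the bigraded prolongation), whereas you use equality from Theorem \ref{maintheorconst} and only the upper bound from \cite{porter2017absolute}, compensating with the elementary inclusion $\mathfrak t(\mathfrak g^0)\subseteq\mathfrak u(\mathfrak g^0)$; your version is slightly more explicit about why the two prolongations coincide as subspaces rather than merely being abstractly isomorphic.
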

\begin{proof}
In the case of regular $\mathfrak g^0$ there is a flat CR structure with the constant modified symbol equal to $\mathfrak g^0$ so that there is no reduction of the bundle $P^0$, and, from the assumption of recoverability, item (2) of Theorem \ref{maintheorconst} gives the same algebra of infinitesimal symmetry as item (2) of  \cite[Theorem 3.2]{porter2017absolute}. The former algebra is the usual Tanaka prolongation of $\mathfrak g^0$ whereas the latter is the bigraded one. 
\end{proof}

Note that without the assumption of recoverability the statement of the previous corollary is wrong. For example, for $\mathrm{rank}\, K=1$, if $\mathrm{ad} K$ is generated by a rank 1 operator, then the usual Tanaka prolongation is infinite dimensional and the bigraded Tanaka prolongation is not.  

\section{Generic CR symbols and nonexistence of homogeneneous models}
\label{nongenericity section}

In this section we prove the following theorem.
\begin{theorem}
\label{gen_nonreg}
For any fixed rank $r>1$, in the set of all CR symbols associated with 2-nondegenerate, hypersurface-type CR manifolds of odd dimension greater than $4r+1$ with rank $r$ Levi kernel and with reduced Levi form of arbitrary signature, the CR symbols not associated with any homogeneous model are generic. For $r=1$, the same statement holds if the reduced Levi form is sign-definite, that is, when the CR structure is pseudoconvex.   
\end{theorem}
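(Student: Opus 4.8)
\emph{Strategy.} The plan is to reduce the nonexistence of homogeneous models to an algebraic obstruction on the matrices $C_1,\dots,C_r$ of \eqref{matrixg0pm2}, and then to verify that this obstruction is violated on a Zariski-open dense set of symbols.

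\emph{Step 1: from homogeneity to an algebraic system.} First I would observe that if a $2$-nondegenerate, hypersurface-type CR manifold with CR symbol $\mathfrak g^0$ is homogeneous, then the reduction procedure of Section~\ref{reduction_and_nongenericity_section} terminates and equips the structure with a reduced modified symbol $\mathfrak g^{0,\mathrm{red}}$ which, as noted there, is a Lie subalgebra of $\mathfrak g_-\rtimes\mathfrak{csp}(\mathfrak g_{-1})$ and an abstract reduced modified CR symbol of type $\mathfrak g^0$ in the sense of Definition~\ref{abstractred}. Writing $\mathfrak g_0^{\mathrm{red}}=\mathfrak X_{0,2}^{\mathrm{red}}\oplus\mathfrak g_{0,0}^{\mathrm{red}}\oplus\mathfrak X_{0,-2}^{\mathrm{red}}$ in the matrix model \eqref{SPinclusion1}, with $\mathfrak X_{0,\pm2}^{\mathrm{red}}$ spanned by matrices of the form \eqref{Xzero2} for suitable $\Omega_1,\dots,\Omega_r$ and $\mathfrak g_{0,0}^{\mathrm{red}}$ block-diagonal with upper-left blocks running over a subalgebra $\mathscr A^{\mathrm{red}}\subseteq\mathscr A$, the condition that $\mathfrak g^{0,\mathrm{red}}$ be a Lie algebra is, by literally the computation behind Proposition~\ref{system of conditions for homogeneity lemma}, the solvability of the system \eqref{system} with $\mathscr A$ replaced throughout by $\mathscr A^{\mathrm{red}}$. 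From that system I would single out two consequences: item~(iii) forces each $\Omega_j$ into the fixed linear space
\[
\mathscr B:=\bigl\{\Omega\in\mathrm{Mat}_{m\times m}(\C)\ \big|\ \Omega^TH_\ell\overline{C_i}+H_\ell\overline{C_i}\Omega\in\mathrm{span}_{\C}\{H_\ell\overline{C_s}\}_{s=1}^r\ \forall i\bigr\},
\]
and, once the $\Omega_j$ are determined, item~(iv) expresses every product $C_i\overline{C_j}$ through the $\Omega$'s, their commutators, and $\mathscr A^{\mathrm{red}}$.

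\emph{Step 2: the case $r>1$.} The hypothesis $\dim M>4r+1$ is equivalent to $m\ge r+1$, where $m=\operatorname{rank}(H/K)$. For $r\ge2$ and $m\ge r+1$ the inequality $r\bigl(\binom{m+1}{2}-r\bigr)\ge m^2-1$ holds, which means that the linear maps $\alpha\mapsto(\alpha C_iH_\ell^{-1}+C_iH_\ell^{-1}\alpha^T\bmod\mathrm{span})_i$ and $\Omega\mapsto(\Omega^TH_\ell\overline{C_i}+H_\ell\overline{C_i}\Omega\bmod\mathrm{span})_i$, whose kernels are $\mathscr A$ (intersected with the second defining condition of \eqref{intersection algebra}) and $\mathscr B$, are generically injective modulo the scalars they kill; hence for generic $(C_1,\dots,C_r)$ one has $\mathscr A=\mathscr B=\C I$, an open condition it suffices to verify at one explicit tuple. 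Then $\Omega_j\in\C I$, and substituting $\Omega_j=t_jI$ into item~(iv) collapses its left-hand side to $C_i\overline{C_j}$ minus a scalar matrix, so $C_i\overline{C_j}\in\C I$ for all $i,j$. Since the locus $\{C_i\overline{C_j}\in\C I\ \forall i,j\}$ inside the space of admissible tuples (those with $C_iH_\ell^{-1}$ symmetric) is Zariski-closed and proper, while $\{\mathscr A=\mathscr B=\C I\}$ is Zariski-open dense, the CR symbols admitting a homogeneous model lie in a proper Zariski-closed subset; passing to the quotient by the conformal symplectic group gives the asserted genericity, and the argument is insensitive to the signature of $\ell$.

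\emph{Step 3: the case $r=1$ with $\ell$ sign-definite, and the main obstacle.} Here the inequality of Step~2 fails, so a different input is needed: sign-definiteness lets one normalize, via the canonical forms of \cite{sykes2020canonical} (equivalently Autonne--Takagi), to $H_\ell=I$ and $C_1=D=\mathrm{diag}(d_1,\dots,d_m)$, $d_1\ge\cdots\ge d_m\ge0$, and genericity in the resulting $(m-1)$-dimensional moduli means the $d_i$ are distinct and positive; for such $D$ one checks directly that $\mathscr A=\C I$, so $\mathscr A^{\mathrm{red}}$ is $\{0\}$ or $\C I$. If $\mathscr A^{\mathrm{red}}=\C I$, item~(ii) forces $\Omega_1\in\C I$ and item~(iv) then gives $D^2=C_1\overline{C_1}\in\C I$, impossible for distinct $d_i$. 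If $\mathscr A^{\mathrm{red}}=\{0\}$, then $\mathfrak g_0^{\mathrm{red}}=\C X\oplus\C Y\oplus\C I$ with $X,Y$ the generators \eqref{Xzero2}, and imposing $[X,Y]\in\mathfrak g_0^{\mathrm{red}}$ and comparing the four matrix blocks yields (with the $I$-coefficient forced to vanish) $\Omega_1^TD+D\Omega_1=\overline a\,D$ and $D^2=a\Omega_1+\overline a\,\Omega_1^{*}-[\Omega_1^{*},\Omega_1]$ for a scalar $a$; the first relation pins the diagonal of $\Omega_1$ to $\overline a/2$ and ties $d_j(\Omega_1)_{ji}=-d_i(\Omega_1)_{ij}$, and feeding this into the diagonal and off-diagonal parts of the second relation and using positivity of the $d_i$ forces the diagonal entries of $D^2$ to coincide — again contradicting distinctness (the strata with some $d_i=0$ being non-generic). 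This last sub-case is the genuine difficulty: the reduced symbol really involves a non-scalar $\Omega_1$ and closure becomes a quadratic system in $\Omega_1$ and $a$, whose unsolvability for generic eigenvalue data rests on the precise shape of $\mathscr B$ and on the Hermitian positivity of the pseudoconvex canonical form — positivity absent in the sign-indefinite case, consistent with the theorem failing there for $r=1$. A secondary point to nail down is that the reduction procedure of Section~\ref{reduction_and_nongenericity_section} indeed terminates for every homogeneous model and outputs a reduced symbol satisfying Definition~\ref{abstractred} together with the analogue of \eqref{system}, which amounts to checking that the level sets arising in the procedure meet $\Re P^0$ and have constant $\theta_0$-image along the orbit of the symmetry group.
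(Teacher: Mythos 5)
Your overall route is the same as the paper's: translate ``admits a homogeneous model'' into the existence of a reduced modified symbol of type $\mathfrak g^0$, hence into solvability of the system \eqref{system} with $\mathscr A$ replaced by a subalgebra $\mathscr A_0\subseteq\mathscr A$; show that for generic symbols $\mathscr A$ (and the space of admissible $\Omega$'s) collapses to $\C I$; and then derive a contradiction, for $r>1$ by forcing all $C_i\overline{C_j}$ to be scalar (equivalent to the paper's conclusion of regularity), and for $r=1$ pseudoconvex by an inequality on the diagonal entries of the degree-zero closure equation. Your $r=1$, $\mathscr A^{\mathrm{red}}=0$ computation is a correct variant of the paper's argument (the paper compares the maximal diagonal entry with the trace of \eqref{condition 4 set equal to zero}; your max/min comparison of the diagonal entries of $D^2$ gives the same contradiction), your splitting into $\mathscr A^{\mathrm{red}}\in\{0,\C I\}$ replaces the paper's appeal to generic nonregularity harmlessly, and your flagged ``secondary point'' about termination of the reduction for homogeneous structures is treated at the same informal level in the paper itself.

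The genuine gap is in Step 2, at the claim that $\mathscr A=\mathscr B=\C I$ for generic $(C_1,\dots,C_r)$. The dimension inequality $r\bigl(\tbinom{m+1}{2}-r\bigr)\ge m^2-1$ is only a necessary condition for generic maximal rank of the relevant linear maps; it does not ``mean'' generic injectivity modulo scalars, especially since the tuples $(C_i)$ range over the proper subvariety of admissible data (each $C_iH_\ell^{-1}$ symmetric), so one cannot argue genericity in the full matrix space. You correctly note that it suffices to verify the open condition at one explicit tuple, but you never produce or check such a tuple, and that verification is the technical core of the theorem: it is exactly the content of Lemma \ref{generically minimal inersection algebra} and of its extension to the $\Omega$-constraints in the $r>1$ case, carried out via the diagonal/hollow splitting, the skew-symmetry argument under the distinct-modulus conditions \eqref{generic assumption about C} and \eqref{generic assumption about C_1}, and the rank analysis of the linear system \eqref{diagsystem}, whose block-triangular structure is where the hypothesis $m\ge r+1$ (i.e.\ $\dim M>4r+1$) actually enters. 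Without that witness computation, the assertion that for generic symbols every solution of \eqref{firstalgebra} and of \eqref{secondalgebra} is scalar --- and hence that $\Omega_j\in\C I$, which drives your whole $r>1$ endgame --- is unsupported, so as written the proposal establishes the theorem only modulo the paper's key lemma.
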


\begin{remark}
We believe that the pseudoconvexity assumption in the case of $r=1$ is not essential and can be omitted through more subtle analysis of the corresponding modification of system \eqref{system} than the analysis we apply for the pseudoconvex case (see Remark \ref{off-diagonal_rem} below for more detail). For the discussion on sharpness of the lower bounds for the dimension of the ambient manifold  see Remark \ref{dimboundrem} below. The goal here is to exhibit the phenomena of non-existence of homogeneous models for generic basic data such as the CR symbol rather than to get the most general results in this direction.
\end{remark}

\begin{proof}
The proof consists of a series of lemmas:

The following lemma is about the structure of the algebra $\mathscr A$, defined in \eqref{intersection algebra}, for generic CR symbols. Note that the inclusion 
\begin{equation}
\label{linein}
\{s\,\mathrm{Id}\,|\,s\in \mathbb C\}\subset\mathscr A.
\end{equation}
always holds.

\begin{lemma}\label{generically minimal inersection algebra}
For any fixed rank $r$, in the set of all CR symbols associated with 2-nondegenerate, hypersurface-type CR manifolds of odd dimension greater than $4r+1$ with rank $r$ Levi kernel, the subset of CR symbols such that the algebra 
\begin{equation}
\label{minimalA}
\mathscr{A}=\{s\,\mathrm{Id}\,|\, s\in\C\}
\end{equation}
is generic.
\end{lemma}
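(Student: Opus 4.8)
The plan is to set up the space of all CR symbols (with fixed rank $r$ Levi kernel and fixed, say, signature of the reduced Levi form) as an algebraic variety parametrized by the $r$-tuple $(C_1,\dots,C_r)$ of $m\times m$ matrices modulo the action of the relevant structure group (coming from changes of basis in $\mathfrak{g}_{-1,1}$ compatible with $H_\ell$ and permutations/linear reparametrizations of the span $\mathrm{span}\{C_i\}$), and to show that the locus where $\mathscr A$ is strictly larger than $\{sI\}$ is contained in a proper Zariski-closed subset. By \eqref{linein} the inclusion $\{sI\}\subset\mathscr A$ is automatic, so what must be ruled out generically is the existence of any $\alpha\notin\C I$ satisfying simultaneously \eqref{firstalgebra} and \eqref{secondalgebra}. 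First I would record that, for each fixed $\alpha$, conditions \eqref{firstalgebra} and \eqref{secondalgebra} are linear in the entries of $(C_1,\dots,C_r)$; dually, for fixed $(C_1,\dots,C_r)$, the condition ``$\alpha\in\mathscr A$'' is linear in $\alpha$, so $\mathscr A$ is the solution space of a linear system whose coefficients are polynomial in the $C_i$. Thus $\dim_\C\mathscr A$ is an upper-semicontinuous function of the symbol, and it suffices to exhibit \emph{one} CR symbol (for each admissible signature and each $r$, subject to the dimension bound $\dim M>4r+1$, i.e.\ $m>2r$, together with the genericity of nondegeneracy \eqref{estim1}) for which $\mathscr A=\{sI\}$. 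Then the set where $\dim\mathscr A\geq 2$ is cut out by the vanishing of all $2\times 2$ minors of the appropriate coefficient matrix, hence is Zariski-closed and, by the example, proper.

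The core of the argument is therefore the explicit construction of a ``test symbol.'' The natural choice is to take the $C_i$ to be (conjugates of, or directly) a spanning set chosen so that $\mathrm{span}\{C_iH_\ell^{-1}\}$ and $\mathrm{span}\{H_\ell\overline{C_i}\}$ are as ``rigid'' as possible — e.g.\ taking the $C_i$ supported on well-separated, generic positions so that the symmetrized products $\alpha C_iH_\ell^{-1}+C_iH_\ell^{-1}\alpha^T$ land in $\mathrm{span}\{C_jH_\ell^{-1}\}$ only when $\alpha$ is scalar. Concretely, I would pick $C_1$ to be a symmetric (w.r.t.\ $H_\ell$) matrix with distinct eigenvalues in ``general position,'' forcing from \eqref{firstalgebra} with $i=j=1$ that $\alpha$ must be diagonal in the eigenbasis of $C_1H_\ell^{-1}$; then choose $C_2,\dots,C_r$ to break the remaining diagonal torus down to the scalars, using that $m>2r$ leaves enough ``room'' (enough coordinates untouched by the span) to prevent any off-diagonal or non-scalar diagonal $\alpha$ from preserving all the spans. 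The condition \eqref{secondalgebra} is handled symmetrically (it is the image of \eqref{firstalgebra} under the antilinear involution), so imposing it simultaneously only further constrains $\alpha$, and in particular for the test symbol it is automatically satisfied once \eqref{firstalgebra} forces $\alpha\in\C I$ since $I$ obviously satisfies \eqref{secondalgebra}. For $r=1$ the pseudoconvex (sign-definite) case is easiest: with $H_\ell=I$ one takes $C_1$ symmetric with distinct eigenvalues, \eqref{firstalgebra} reads $\alpha C_1+C_1\alpha^T\in\C C_1$, and a short computation shows $\alpha$ must be scalar; this is where the sign-definiteness is used, because for indefinite $\ell$ one would need to also track which real forms of $\alpha$ are admissible, the subtlety alluded to in the remark after the theorem statement.

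The main obstacle I anticipate is purely bookkeeping-combinatorial rather than conceptual: verifying that the chosen test matrices $C_1,\dots,C_r$ actually do cut $\mathscr A$ down to $\C I$ requires solving the coupled linear system \eqref{firstalgebra}--\eqref{secondalgebra} for a generic-looking but explicit tuple, and one must make sure the dimension count $m>2r$ is exactly what is needed — i.e.\ that with $m\leq 2r$ there could genuinely be extra elements, so the bound in the hypothesis is not slack for this step. I would organize this by an induction on $r$: having arranged $C_1,\dots,C_{r-1}$ so that the corresponding intersection algebra is already a small torus (or $\C I$ plus a controlled complement), one adds $C_r$ occupying fresh coordinates to kill whatever remains, each added generator consuming a bounded number of the $m$ available dimensions. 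The final write-up then consists of (i) the semicontinuity/algebraicity remarks establishing that it suffices to find one example, (ii) the inductive construction of the example with the dimension count made explicit, and (iii) the concluding sentence that the non-minimal locus is a proper subvariety, hence its complement is generic in the sense used throughout the paper.
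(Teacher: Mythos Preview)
Your overall architecture is right and matches the paper: upper-semicontinuity of $\dim_\C\mathscr A$ in the $C_i$'s reduces the problem to exhibiting a single test symbol with $\mathscr A=\C I$, and the paper indeed takes diagonal $C_i$ as the test. However, there are two genuine errors in the execution.

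First, your translation of the dimension hypothesis is wrong. With $\dim M=2n+1$ and $m=n-r$, the condition $\dim M>4r+1$ is $n>2r$, i.e.\ $m>r$, not $m>2r$. This matters because the paper's linear-algebra step for $r>1$ (a system of $(r-1)m$ equations in $r^2-1$ unknowns) uses exactly $m>r$, and your proposed inductive scheme ``each $C_i$ consumes a bounded number of the $m$ dimensions'' is not calibrated to the correct bound.

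Second, and more seriously, your claim that \eqref{firstalgebra} alone forces $\alpha\in\C I$ for the test symbol is false, so the sentence ``\eqref{secondalgebra}\dots only further constrains $\alpha$'' hides a real gap. Take $H_\ell=I$, $r=1$, $C_1=\mathrm{diag}(\lambda_1,\dots,\lambda_m)$ with distinct $\lambda_i$. Condition \eqref{firstalgebra} reads $\alpha_{ij}\lambda_j+\lambda_i\alpha_{ji}=c\lambda_i\delta_{ij}$; for $i\neq j$ this says $\alpha_{ij}\lambda_j=-\lambda_i\alpha_{ji}$, which has a $\binom{m}{2}$-dimensional space of off-diagonal solutions. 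Only after imposing \eqref{secondalgebra} as well do you get $\alpha_{ij}(|\lambda_i|^2-|\lambda_j|^2)=0$, whence $\alpha^{\mathrm{holl}}=0$ under the genericity condition $|\lambda_i|\neq|\lambda_j|$. This is precisely what the paper does (splitting $\alpha=\alpha^{\mathrm{diag}}+\alpha^{\mathrm{holl}}$ and using both conditions to kill the hollow part), and it works for every signature of $H_\ell$; your invocation of sign-definiteness here is misplaced---the pseudoconvexity hypothesis enters only later in the proof of Theorem~\ref{gen_nonreg}, in the analysis of condition~(iv) of \eqref{system}, not in this lemma.
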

\begin{proof}
Fix a CR symbol $\mathfrak g^0$, and, still using $m=n-r$, let $H_\ell$ and $\{C_j\}_{j=1}^r$ be the  $m\times m$ matrices associated with $\mathfrak{g}^0$ as in \eqref{matrixg0pm2}, where $H_\ell$ represents the reduced Levi form. Since the system for the algebra $\mathscr{A}$ given by \eqref{firstalgebra}-\eqref{secondalgebra} is overdetermined and linear (in $\alpha$) and the inclusion \eqref{linein} always holds, to prove our lemma it is enough to prove that for fixed signature of the reduced Levi form $\ell$  (or equivalently, signature of the Hermitian matrix $H_\ell$) there exists at least one tuple of matrices $\{C_j\}_{j=1}^r$ for which \eqref{minimalA} holds.

Assume that the matrices $C_j$ are nonsingular for all $1\leq j\leq r$. If 
\begin{equation}
\label{Ai_alpha}
A_i=\alpha C_iH_\ell^{-1},
\end{equation}
with $\alpha$ satisfying \eqref{firstalgebra}, then we have that $A_i+A_i^T\in  \mathrm{span}\{C_jH_\ell^{-1}\}_{j=1}^{r}$. Recalling that the set of  solutions of the matrix equation $A+A^T=S$ (with respect to $A$ for a fixed symmetric matrix $S$) is $\frac{1}{2}S+\mathfrak{so}(m)$ and that $\alpha= A H_\ell C_i^{-1}$ from \eqref{Ai_alpha}, we have that $\alpha$ satisfies system \eqref{firstalgebra} if and only if 
\[
A_i\in\mathrm{span}\{C_jH_\ell^{-1}\}_{j=1}^{r}+\mathfrak{so}(m)\quad\quad\forall\, i\in\{1,\ldots,m\},
\]
which is equivalent to
\begin{equation}
\label{firstalgebra1}
\alpha\in \bigcap_{i=1}^r \left(\mathrm{span}\{C_j C_i^{-1}\}_{j=1}^r+ \mathfrak{so}(m) H_\ell C_i^{-1}\right).
\end{equation}
Similar analysis of \eqref{secondalgebra} implies that  $\alpha$ satisfies system \eqref{secondalgebra} if and only if
\begin{equation}
\label{secondalgebra1}
\alpha\in \bigcap_{i=1}^r \left(\mathrm{span}\left\{\overline C_i^{-1} \overline C_j\right\}_{j=1}^r+\overline C_i^{-1} H^{-1}\mathfrak{so}(m) \right).
\end{equation}
Hence, the algebra $\mathscr{A}$ satisfies
\begin{equation}
\label{intersection for generic result lemma}
\mathscr A=\bigcap_{i=1}^r\left( \left(\mathrm{span}\left\{C_j C_i^{-1}\right\}_{j=1}^r+ \mathfrak{so}(m) H_\ell C_i^{-1}\right)\bigcap \left( \mathrm{span}\left\{\overline C_i^{-1} \overline C_j\right\}_{j=1}^r+  \overline C_i^{-1} H^{-1}\mathfrak{so}(m)\right)\right).
\end{equation}

Choose a basis in $\mathfrak g_{-1,1}$  such that 
\begin{equation}
\label{Hdiag}
H_\ell=\mathrm{diag}(\underbrace{1,\ldots, 1}_{q\, \mathrm {entries}}, -1, \ldots, -1).
\end{equation}

We consider the splitting of $\mathfrak {gl}(m)$ into the space $\mathfrak {gl}^{\mathrm{diag}}(m)$ of diagonal $m\times m$ matrices  and the space $\mathfrak {gl}^{\mathrm{holl}}(m)$ of $m\times m$ matrices with all zeros on the diagonal, sometimes called hollow matrices,
\begin{equation}
\label{splitdiag}
\mathfrak {gl}(m)=\mathfrak {gl}^{\mathrm{diag}}(m)\oplus \mathfrak {gl}^{\mathrm{holl}}(m).
\end{equation}

Since, by our assumptions, $m>r$,
we can take a special tuple $\{C_j\}_{j=1}^r$  such that 
every matrix   $C_j$ is nonsingular and diagonal, that is,
\begin{equation}
\label{Cdiag}
C_j=\mathrm{diag}(\lambda_{j,1}, \ldots\lambda_{j,m}).
\end{equation}
Accordingly, for every  $i\in\{1,\ldots r\}$, we have the following inclusions:
\begin{enumerate}
\item The spaces $\mathrm{span}\left\{C_j C_i^{-1}\right\}_{j=1}^{r}$ and $\mathrm{span}\left\{\overline C_i^{-1} \overline C_j\right\}_{j=1}^{r}$ belong to $\mathfrak {gl}^{\mathrm{diag}}(m)$; 
\item The spaces 
$\mathfrak{so}(m) H_\ell C_i^{-1}$ and 
$\overline C_i^{-1} H^{-1}\mathfrak{so}(m)$ belong to $\mathfrak {gl}^{\mathrm{holl}}(m)$.
\end{enumerate}

Based on the splitting in \eqref{splitdiag}, for $\alpha\in \mathfrak{gl}(m)$,
we let $\alpha^{\mathrm{diag}}\in\mathfrak {gl}^{\mathrm{diag}}(m)$  
and 
$\alpha^{\mathrm{holl}}\in \mathfrak {gl}^{\mathrm{holl}}(m)$
denote the matrices for which
\begin{equation}
\label{alphasplit}
\alpha= \alpha^{\mathrm{diag}}+\alpha^{\mathrm{holl}}.
\end{equation}
From \eqref{intersection for generic result lemma} and the splitting in \eqref{splitdiag} it follows that
\begin{align}\label{hollowpart}
 \alpha^{\mathrm{holl}}\in \bigcap_{i=1}^{r}\left(\left(\mathfrak{so}(m) H_\ell C_i^{-1}\right)\bigcap \left( \overline C_i^{-1} H^{-1}\mathfrak{so}(m)\right)\right) \quad\quad\forall\,\alpha\in\mathscr{A}
\end{align}
and
\begin{align}\label{diagpart}
\alpha^{\mathrm{diag}}\in
\bigcap_{i=1}^{r}\left( \left(\mathrm{span}\left\{C_j C_i^{-1}\right\}_{j=1}^{r}\right)\bigcap \left( \mathrm{span}\left\{\overline C_i^{-1} \overline C_j\right\}_{j=1}^{r}\right)\right) \quad\quad\forall\,\alpha\in\mathscr{A}.
\end{align}
In particular, for a fixed matrix $\alpha\in\mathscr{A}$, by \eqref{firstalgebra1} and \eqref{hollowpart},
there exists $B$ and $\widetilde B$ in $\mathfrak{so}(m)$  such that
\begin{equation}
\label{hollow part of alpha}
\alpha^{\mathrm{holl}}= B H_\ell C_1^{-1}=\overline C_1^{-1} H^{-1} \widetilde B.
\end{equation}

Let 
\begin{equation}
\label{eps}
  \varepsilon_i^q= \begin{cases}
  1, & 1\leq i\leq q,\\
  -1, & q+1\leq i\leq m,
  \end{cases}  
\end{equation}
and in the following calculation we denote the $(i,j)$ entry of a matrix $X$ by $X_{i,j}$.
Using \eqref{Hdiag} and \eqref{Cdiag}, we get that 
$$(B H_\ell C_1^{-1})_{i,j}= \frac{\varepsilon_j^q B_{i, j}}{\lambda_{1,j}} 
\quad\mbox{ and }\quad  
\left(\overline C_1^{-1} H^{-1}B\right)_{i,j}=\frac{\varepsilon_i^q \widetilde B_{i, j}}{\overline{\lambda_{1,i}}}.$$
From this and \eqref{hollow part of alpha} we have
\begin{equation}
\label{bbtilde}
B_{i,j}= \frac{\varepsilon_i^q \varepsilon_j^q\lambda_{1,j}} {\overline{\lambda_{1,i}}} \widetilde B_{i, j},
\end{equation}
and hence, since $B$ and $\widetilde B$ are skew symmetric,
\begin{equation}
\label{bbtildeAlt}
\frac{\varepsilon_i^q \varepsilon_j^q\lambda_{1,j}} {\overline{\lambda_{1,i}}}
\widetilde B_{i, j}
=B_{i,j}
=-B_{j,i}
=-\frac{\varepsilon_{i}^q\varepsilon_{j}^q\lambda_{1,i}}{\overline{\lambda_{1,j}}}\widetilde B_{j,i}
=\frac{\varepsilon_{i}^q\varepsilon_{j}^q\lambda_{1,i}}{\overline{\lambda_{1,j}}}\widetilde B_{i,j}.
\end{equation}
If we now assume that 
\begin{align}\label{generic assumption about C}
\left|\lambda_{1,i}\right|\neq \left|\lambda_{1,j}\right|\quad\quad\forall\, i\neq j
\end{align}
then it follows from \eqref{bbtildeAlt} that $B=0$, and hence $\alpha^{\mathrm{holl}}=B H_\ell C_1^{-1}=0$, that is,
$\alpha=\alpha^{\mathrm{diag}}$.

In other words, \eqref{generic assumption about C} implies that \eqref{intersection for generic result lemma} can be simplified to
\begin{equation}
\label{intersection for generic result lemma Simplified}
\mathscr A=\bigcap_{i=1}^r\left( \left(\mathrm{span}\left\{C_j C_i^{-1}\right\}_{j=1}^r\right)\bigcap \left( \mathrm{span}\left\{\overline C_i^{-1} \overline C_j\right\}_{j=1}^r\right)\right).
\end{equation}
If $r=1$ then \eqref{intersection for generic result lemma Simplified} is equivalent to \eqref{minimalA}, which is what we wanted to show, so let us now assume that $r>1$. 

Now use that if $\alpha\in \mathcal A$, 
then there exist $\{\nu_{i,j}\}_{i, j=1}^r$ such that  
\begin{equation}
\label{alpha_diag_eq}
\alpha=\sum_{j=1}^r \nu_{i,j} C_j C_i^{-1}=\nu_{i,i} \mathrm{Id}+\sum_{j\neq i} \nu_{i,j} C_j C_i^{-1},
\end{equation}
which implies by comparing the diagonal entries that for every  $i> 1$
\[
\sum_{j=1}^r \frac{\lambda_{j,s}}{\lambda_{1,s}}\nu_{1,j}=\sum_{j=1}^r \frac{\lambda_{j,s}}{\lambda_{i,s}}\nu_{i,j}, \quad \forall s=\{1, \ldots, m\}
\]
or, equivalently, 
\begin{equation}
\label{diagsystem}
\sum_{j>1} \lambda_{j,s}\lambda_{i,s}\nu_{1,j}-\sum_{1\leq j\leq r, j\neq i} \lambda_{j,s}\lambda_{1,s}\nu_{i,j}+ \lambda_{1,s}\lambda_{i,s}(\nu_{1, 1}-\nu_{i, i})=0, \quad \forall 1\leq s\leq m , 2\leq i\leq r,
\end{equation}
which is a system of $(r-1)m$ linear homogeneous equations with respect to $r^2-1$ unknowns 
$\{\nu_{i, j}|1\leq i, j\leq r, i\neq j\}$ and $\{\nu_{1, 1}-\nu_{i, i}| 2\leq i\leq r\}$. 

It remains to note that $(r-1)m\geq r^2-1$ if and only if $m>r$  and for generic tuples 
$\{\lambda_{i, s}|1\leq i\leq r, 1\leq s\leq m\}$ the rank of the matrix in the system \eqref{diagsystem} is $r^2-1$. Indeed, this matrix (after appropriate rearrangement of columns)
has a block-triangular form with $r-1$ diagonal blocks. 

The maximal size diagonal block has size $m\times (2r-2)$ and it is obtained from columns that use variables appearing in the equations in system \eqref{diagsystem} with $i=2$ (i.e., $\{\nu_{1, j}|j>1\}$,  $\{\nu_{2, j}|j>1\})$ and $\{\nu_{1, 1}-\nu_{2, 2}| 2\leq i\leq r\}$ and its $s$th row consists of evaluations of quadratic monomials $x_i x_j$, $i\leq j$ such that at least one $i$ or $j$ takes values in $\{1, 2\}$ at the point  $(x_1, \ldots x_r)=(\lambda_{1, s}, \ldots,\lambda_{r, s})$. 
This diagonal block has maximal rank at generic points; the determinant of each of its maximal minors is a nonzero polynomial in the $\lambda$'s, because when calculating  this minor as the alternating (according to the signature of permutations) sum of the corresponding products of the entries of the submatrix corresponding to this minor there cannot be cancellation, as each term of this alternating sum gives a unique monomial. The latter follows from the fact that distinct rows of this diagonal block depend on disjoint sets of variables and that the monomials at different columns are different.  

The other $r-2$ diagonal blocks are of size $m\times r$ and they are  parameterized by $i>2$. The block corresponding to a given $i>2$ is  obtained from columns of the matrix of the system  \eqref{diagsystem} that correspond to the variables $\{\nu_{i, j}| 1\leq j\leq r,  j\neq i\}$  and $\eta_{1,1}-\eta_{i,i}$. Similar to the maximal size diagonal block, the $s$th row of the diagonal block under consideration consists of evaluations of quadratic monomials $x_1 x_j$ , $1\leq j\leq r$ such that at the point  $(x_1, \ldots x_r)=(\lambda_{1, s}, \ldots,\lambda_{r, s})$. This diagonal block has maximal rank at generic points by the same reason as in the previous paragraph.

Therefore, the matrix  corresponding to System \eqref{diagsystem} has maximal rank, which implies that $\nu_{i, j}=0$ for all  $i\neq j$ and $\nu_{i,i}=\nu_{1,1}$ for all $i$. So, by \eqref{alpha_diag_eq} the matrix $\alpha$ must be a multiple of identity, which proves \eqref{minimalA}. 
\end{proof}

\begin{remark}
\label{dimboundrem}
The lower bound $4r+3$ for the dimension of manifold in Lemma \ref{generically minimal inersection algebra} is sharp for $r=1$, as for $5$-dimensional manifold there is only one CR symbol and it is regular and does not satisfy \eqref{minimalA}. However, for $r>1$ 
this bound is strictly greater than the minimal dimension for which nonregular CR symbols exist, so we expect that this bound is not sharp, but our method of  proof using diagonal $C$'s, cannot improve it. 
\end{remark}

\begin{lemma}
\label{gennonreglem}
For fixed $n=\mathrm{rank}\,H$ and $r=\mathrm{rank}\, K$ 
such that the strict inequality in \eqref{estim1} holds,
the nonregular symbols constitute a generic subset in the set of all CR symbols.
\end{lemma}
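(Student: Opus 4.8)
\emph{Plan.} The strategy is to show that, for each fixed signature of the reduced Levi form $\ell$, the regular CR symbols form a proper closed real-analytic (indeed real-algebraic) subset of the connected moduli space of all CR symbols with that signature and the given $n,r$; since a proper real-analytic subset of a connected real-analytic manifold is nowhere dense, the nonregular symbols are then generic, and one sums over the finitely many signatures $(p,q)$, $p+q=n-r$. Concretely, after diagonalizing $H_\ell$, I would parametrize the CR symbols with the chosen signature by the space $\mathcal T$ of linearly independent $r$-tuples $(C_1,\dots,C_r)$ with $C_iH_\ell^{-1}$ symmetric (cf.\ \eqref{matrixg0pm2}, \eqref{SPinclusion1}), two tuples defining the same symbol precisely when related by the $GL_r(\C)$-action on the tuple together with the conformal-unitary automorphisms of $(\omega,\ell)$. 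The space $\mathcal T$ is a connected open subset of a complex vector space, hence a connected real-analytic manifold, and the projection onto the moduli space is open and surjective, so it suffices to argue on $\mathcal T$. By Remark \ref{regmatrixrem} a tuple is regular exactly when \eqref{regmatrix} holds, and since membership of $C_i\overline C_jC_k+C_k\overline C_jC_i$ in $\mathrm{span}_\C\{C_s\}_{s=1}^r$ is the vanishing of suitable maximal minors, a polynomial condition in the entries of the $C_i$ and their conjugates, the regular tuples form a closed real-algebraic, group-invariant subset $\mathcal T_{\mathrm{reg}}\subset\mathcal T$. It therefore remains only to exhibit one nonregular symbol for each signature.

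\emph{The example.} Here I would use the strict inequality in \eqref{estim1}: writing $m=n-r$, one has $\binom{m+1}{2}>r$, hence $\dim_\C\mathcal C\geq r+1$ where $\mathcal C:=\{C\mid CH_\ell^{-1}\text{ symmetric}\}\cong\mathrm{Sym}_m(\C)$, and in particular $m\geq2$. Choose $C_1$ with $C_1H_\ell^{-1}=\mathrm{diag}(1,2,0,\dots,0)$. Since $H_\ell$ is real diagonal we have $\overline{C_1}=C_1$, so $C_1\overline{C_1}C_1=C_1^3$, and $C_1^3H_\ell^{-1}=\mathrm{diag}(1,8,0,\dots,0)$ is symmetric but not a scalar multiple of $C_1H_\ell^{-1}$; thus $C_1^3\in\mathcal C$ and $C_1,C_1^3$ are linearly independent. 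As $\dim_\C\mathcal C\geq r+1$, pick a complement $W$ to $\mathrm{span}_\C\{C_1,C_1^3\}$ in $\mathcal C$ and choose $C_2,\dots,C_r$ linearly independent in $W$; then $C_1,\dots,C_r$ are linearly independent and $C_1\overline{C_1}C_1=C_1^3\notin\mathrm{span}_\C\{C_1,\dots,C_r\}$, so \eqref{regmatrix} fails with $i=j=k=1$. Hence the span of the corresponding $\ell$-self-adjoint antilinear operators defines an admissible $2$-nondegenerate CR symbol that is not regular, so $\mathcal T_{\mathrm{reg}}\subsetneq\mathcal T$ and the lemma follows. (For $r=1$ one may instead simply invoke \eqref{cube}: $A^3\in\C A$ manifestly fails for generic $\ell$-self-adjoint antilinear $A$ once $m\geq2$.)

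\emph{Main point of care.} There is no deep obstacle — the argument combines two soft facts, that regularity is a closed algebraic condition and that the moduli space is connected, with an explicit witness. The mild care needed is in verifying that the constructed $(C_1,\dots,C_r)$ genuinely gives a valid $2$-nondegenerate CR symbol — which holds because the $C_i$ are linearly independent, so the associated space of operators contains no nonzero operator equal to zero — and in checking that the triple product $C_1\overline{C_1}C_1$ stays inside the admissible space $\mathcal C$, which follows from $H_\ell$ being real symmetric; both are immediate from the normal form above.
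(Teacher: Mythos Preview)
Your proof is correct and follows the same overall strategy as the paper: regularity is a closed real-algebraic condition (via Remark~\ref{regmatrixrem}), the parameter space is connected, so it suffices to exhibit a single nonregular symbol for each signature. The paper frames this slightly differently---as the nonsolvability of an overdetermined linear system---but the content is the same.

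Where you genuinely diverge is in the construction of the nonregular example. The paper works with generic tuples of diagonal $C_i$ and argues that the augmented linear system \eqref{regmatrix1} has maximal rank for generic $\{\lambda_{i,s}\}$; this forces a case split according to whether $m>r$ or $m\leq r$ (the latter handled by padding with hollow matrices). Your construction is more direct and uniform: you fix a single explicit $C_1$ with $C_1\overline{C_1}C_1\notin\C C_1$, then use the dimension count $\dim\mathcal C\geq r+1$ (i.e., the strict inequality in \eqref{estim1}) to choose $C_2,\dots,C_r$ in a complement of $\mathrm{span}\{C_1,C_1^3\}$, guaranteeing $C_1^3\notin\mathrm{span}\{C_s\}$. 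This avoids the case split entirely and is arguably cleaner. What the paper's approach buys, on the other hand, is consistency with the machinery already set up for Lemma~\ref{generically minimal inersection algebra}, where the diagonal ansatz and rank-counting are reused.
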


\begin{proof}
We prove this using the same principle that was applied for the proof of Lemma \eqref{generically minimal inersection algebra}; that is, we will characterize nonregularity as nonsolvability of a certain overdetermined algebraic system and then find one example of a CR symbol for which this system has no solution.

Given a CR symbol $\mathfrak{g}_0$ represented by the matrices $\{C_{i}\}_{i=1}^r$ as in \eqref{matrixg0pm2}, the condition for regularity of $\mathfrak{g}_0$ in Remark \ref{regmatrixrem} is given by the system of equations \eqref{regmatrix}.

Let $m=n-r$ as before. First  consider the case when $n>2r$ or $m>r$.
Working with respect to a basis of $\mathfrak{g}_{-1,1}$ such that $H_\ell$ is as in \eqref{Hdiag}, choose $\{C_i\}_{i=1}^r$ and $\{\lambda_{i,s}\}_{1\leq i\leq r,1\leq s\leq m}^m$ satisfying \eqref{Cdiag}.
The system \eqref{regmatrix} can be rewritten as
\begin{equation}
\label{regmatrix1}
\sum_{l=1}^r\nu_{i,j,k}^l\lambda _{l,s}=2\lambda_{i,s}\overline{\lambda_{j,s}}\lambda_{k,s}\quad\quad\forall\, i,j,k\in\{1,\ldots, r\}, s\in{1, \ldots, m}
\end{equation}
for some unknowns $\{\nu_{i,j,k}^l\}$. Fix the triple $i, j, k$ and consider the $m\times (r+1)$ matrix such that its $s$th row consists of evaluations of monomials $x_l$ with $1\leq l \leq r$ and $2 x_i \overline x_j x_k$  at the point  $(x_1, \ldots x_r)=(\lambda_{1, s}, \ldots,\lambda_{r, s})$. By assumption $m>r$, so the solvability of the linear system \eqref{regmatrix1} with respect to $\{\nu_{i,j,k}^l\}_{ l=1}^m$ is equivalent to the fact that this matrix, which is exactly the augmented matrix of this nonhomogeneous system, has rank not greater than $r$. On the other hand,  by the same arguments applied at the end of the proof of Lemma \ref{generically minimal inersection algebra} this matrix has rank $r+1$ for a generic tuple  of diagonal matrices $\{C_i\}_{i=1}^m$, so the system is not solvable generically.

Now consider the case when $m\leq r$ but the strict equality in \eqref{estim1} holds. Note that this implies $m>1$. Choose the tuple  $\{C_i\}_{i=1}^r$ such that the first $m-1$ elements in it are diagonal as in  \eqref{Cdiag} and the rest have zero on the diagonal and consider the matrix equation \eqref{regmatrix} for example for $i=j=k=1$. By construction, using the splitting  \eqref{splitdiag}, we get that
\[
C_1\overline{C_1} C_1 \in \mathrm{span}\{C_i\}_{i=1}^{m-1},
\]
which yields exactly the same relation as in the case $r=m-1$, and we can repeat the argument of the case $m>r$.
\end{proof}

Now we are ready to prove our theorem. We will show that as a desired generic set of CR symbols in the theorem one can  take the set of nonregular symbols  with the algebra $\mathscr A$ satisfying \eqref{minimalA} and maybe some additional generic conditions.

As is done in the proof of Lemma \ref{generically minimal inersection algebra}, after fixing a symbol $\mathfrak g^0$ with these generic properties, we let $H_\ell$ and $\{C_j\}_{j=1}^r$ be a set of $m\times m$ matrices associated with $\mathfrak{g}^0$. There exists a reduced modified CR symbol of type $\mathfrak g^0$ if and only if there exist $m\times m$ matrices $\{\Omega_j\}_{j=1}^r$ such that the system of relation \eqref{system} can be satisfied after replacing the algebra $\mathscr{A}$ with some subalgebra $\mathscr{A}_0\subset \mathscr{A}$. So, to produce a contradiction, let us assume that there exists a reduced modified CR symbol $\mathfrak{g}^{0,\mathrm{red}}$ of type $\mathfrak g^0$, and fix such corresponding $\{\Omega_j\}_{j=1}^r$ and $\mathscr{A}_0$. In particular, under the identification in \eqref{SPinclusion1}, $\mathfrak{g}^{0,\mathrm{red}}$ is spanned by matrices of the form in \eqref{Xzero2} along with block diagonal matrices of the form in \eqref{g00 subalg. representation} but with $\mathscr{A}$ replaced by $\mathscr{A}_0$.

If $\mathrm{Id}$ belongs to the subalgebra $\mathscr{A}_0$ then, for each $i$, the first two conditions in \eqref{system} imply that $\Omega_i$ belongs to $\mathscr{A}_0$, but this implies that $\mathfrak{g}^{0}$ is regular, contradicting our assumptions. So $\mathrm{Id}$ does not belong to $\mathscr{A}_0$, and hence, by \eqref{minimalA},
\begin{align}\label{Azero is zero condition}
\mathscr{A}_0=0.
\end{align}

Similar to arguments of Lemma \ref{generically minimal inersection algebra}, since the system of relations  (iii) and (iv) from  \eqref{system} with $\mathscr{A}_0=0$ is overdetermined and  algebraic (with respect to the unknown matrices $\Omega_i$), then by the classical elimination theory in order to prove generic nonexistence of solutions of this system it is enough to prove that for fixed signature of the reduced Levi form $\ell$  (or, equivalently, signature of the Hermitian matrix $H_\ell$) there exists at least one tuple of matrices $\{C_j\}_{j=1}^r$ for which this system of equation is incompatible.

First consider the case $r>1$, which is simpler. For each $i$, condition (iii) of \eqref{system} means that $\alpha=\Omega_i$ satisfies \eqref{secondalgebra}, or equivalently, $\alpha=\overline{H_\ell}^{-1}\Omega_i^*\overline{H_\ell}$ satisfies \eqref{firstalgebra} and we can repeat the arguments of the proof of Lemma \ref{generically minimal inersection algebra} after formula \eqref{intersection for generic result lemma Simplified} to show that  for generic tuple of diagonal matrices $\{C_j\}_{j=1}^r$ the matrix $\Omega^{\mathrm{diag}}_i$ is a multiple of the identity matrix, noticing that in that part of the proof of Lemma \ref{generically minimal inersection algebra} we only used that $\alpha$ satisfies \eqref{firstalgebra}.

Now we apply an argument similar to the one in the proof of  Lemma \ref{generically minimal inersection algebra} between \eqref{hollow part of alpha} and  \eqref{intersection for generic result lemma Simplified} to conclude that $\Omega_i^{\mathrm{holl}}=0$. In more detail, by analogy with \eqref{hollowpart} , taking into account that $\alpha=\Omega_i$ satisfies \eqref{secondalgebra} only, we have that
\begin{align}\label{hollowpart omega}
 \Omega_i^{\mathrm{holl}}\in \bigcap_{j=1}^r\left( \overline C_j^{-1} H^{-1}\mathfrak{so}(m)\right) 
\end{align}
and therefore by analogy with with \eqref{hollow part of alpha}, there exist matrices $B$ and $\widetilde B$ in $\mathfrak{so}(m)$ such that
\begin{equation}
\label{hollow part of omega}
\Omega_i^{\mathrm{holl}}= \overline C_1^{-1} H^{-1} B=\overline C_2^{-1} H^{-1} \widetilde B.
\end{equation}
Comparing entries in \eqref{hollow part of omega} and using skew-symmetricity of $B$ and $\widetilde B$, we get that in order to guarantee that $\Omega^{\mathrm{holl}}=0$ we should replace the condition in \eqref{generic assumption about C} by the condition that 
\begin{equation}
\label{generic assumption about C_1}
\begin{vmatrix}
\lambda_{1,i}&\lambda_{1,j}\\
\lambda_{2,i}&\lambda_{2,j}
\end{vmatrix}\neq 0, \quad \forall i\neq j.
\end{equation}
Therefore, for a generic tuple of diagonal matrices $\{C_j\}_{j=1}^r$, we get that $\Omega_i=s\mathrm{Id}\in \mathscr A$ and so the symbol is regular, contradicting our assumptions.

Now consider the remaining case, which is where $r=1$ and $H_\ell$ is positive definite.
Working with respect to a basis of $\mathfrak{g}_{-1,1}$ such that $H_\ell=\mathrm{Id}$, by \eqref{Azero is zero condition} and condition (iv) in \eqref{system}, we have
\begin{equation}
\label{condition 4 set equal to zero}
\sum_{s=1}^{r}\left(\overline{\mu_{1,1}^s}\Omega_s +\mu_{1,1}^s \Omega_1^*\right)= \left[\Omega_1^*,\Omega_1 \right]+C_1\overline{C_1}.
\end{equation}
By analogy with \eqref{diagpart} with $r=1$, taking into account that $\alpha=\Omega_i$ satisfies \eqref{secondalgebra} only, we have that
\begin{align}\label{diagpart omega}
 \Omega_1^{\mathrm{diag}}\in \mathrm{span}\{\mathrm{Id}\}.
\end{align}
Note also that \eqref{hollow part of omega} holds with $r=1$ and $H_\ell=\mathrm{Id}$. Fix $\mu\in\C$ and $B\in\mathfrak{so}(m)$ such that
\[
\Omega_1^{\mathrm{diag}}=\mu \mathrm{Id}
\quad\mbox{ and }\quad
\Omega_1^{\mathrm{holl}}=\overline{C_1}^{-1} B.
\]
Using the notation set in \eqref{Cdiag}, the $(i,j)$ element of $\left[\Omega_1^*,\Omega_1 \right]$ satisfies
\begin{align}
\left(\left[\Omega_1^*,\Omega_1 \right]\right)_{i,j} 
=\left(\left[\overline{B}{C_1^T}^{-1},\overline{C_1}^{-1} B \right]\right)_{i,j}
=\sum_{k=1}^m\left(\frac{1}{|\lambda_{1,k}|^2}-\frac{1}{\overline{\lambda_{1,i}}\lambda_{1,j}}\right)B_{k,i}\overline{B_{k,j}},
\end{align}
and, for $1\leq i\leq m$, by equating the $(i,i)$ elements of the matrices on each side of \eqref{condition 4 set equal to zero} we get
\begin{align}\label{condition 4 diag set equal to zero}
2\Re\mu=\left|\lambda_{1,i}\right|^2+\sum_{k=1}^m\left(\frac{1}{|\lambda_{1,k}|^2}-\frac{1}{\left|\lambda_{1,i}\right|^2}\right)\left|B_{k,i}\right|^2
\quad\quad\forall\, i\in\{1,\ldots, m\}.
\end{align}
Let $i_0$ be the index such that $|\lambda_{1,i_0}|=\max\{|\lambda_{1,1}|,\ldots,|\lambda_{1,m}|\}$. Accordingly, every term on the right side of \eqref{condition 4 diag set equal to zero} is nonnegative, and hence
\begin{align}\label{genericity lemma ineq1}
\left|\lambda_{1,i_0}\right|^2\leq 2\Re\mu.
\end{align}
On the other hand, taking the trace of both sides of \eqref{condition 4 set equal to zero} yields 
\begin{align}\label{genericity lemma ineq2}
2m\Re\mu=\sum_{i=1}^m\left|\lambda_{1,i}\right|^2<m\left|\lambda_{1,i_0}\right|^2,
\end{align}
where the strict inequality is obtained by imposing the assumption in \eqref{generic assumption about C}. Clearly \eqref{genericity lemma ineq1} and \eqref{genericity lemma ineq2} are incompatible, which means that, for the chosen $C_1$, no choice of $\Omega_1$ satisfies \eqref{system}.
\end{proof}

\begin{remark} 
\label{off-diagonal_rem}
The last arguments of the previous proof do not work in the case with $r=1$ and sign-indefinite $H_{\ell}$. There, additional analysis of the equations obtained by comparing  off-diagonal entries in the matrix equation given by condition (iv) of \eqref{system} is needed.  
\end{remark}

\section{Examples with nontrivial modified symbols}
\label{examples} 

In this section we describe three examples of CR structures to which Theorems \ref{maintheorm} and  \ref{maintheorconst} apply. All three examples are actually homogeneous CR manifolds, exhibiting  the maximally symmetric structures as described in Theorem \ref{maintheorconst}. These are instructive examples as they illustrate discrepancies between CR symbols and modified CR symbols, the reduction procedure of Section \ref{reduction_and_nongenericity_section}, and the existence of homogeneous structures with nonregular CR symbols. One may expect this latter phenomenon to be uncommon in light of Theorem \ref{gen_nonreg}, so it is valuable to have examples of it. For further examples, including a full implementation of the parallelism construction referred to in Theorem \ref{maintheorconst}, see Section \ref{Examples of the canonical parallelism construction}.

The first example is nonregular in the sense of Definition \ref{definition of regular}.
This section's second and third examples have the same CR symbol in the sense of \cite[Definition 2.2]{porter2017absolute} but different modified CR symbols, and, as is mentioned in the introduction, while the construction of an absolute parallelism given in \cite{porter2017absolute} is the same for both examples, the construction given here varies, resulting in parallelisms of different dimensions for each example  whose dimension matches that of the underlying CR manifold's symmetry group.

Each example in this section is a maximally symmetric homogeneous structure described in terms of a reduced modified CR symbol, and Theorem \ref{maintheorconst} implies that we can indeed describe such structures up to local equivalence by giving one of their reduced modified symbols as defined in \eqref{abstractred}. From a given reduced modified symbol $\mathfrak g^{0,\mathrm{red}}$, one can construct globally the homogeneous model  $(M_0,H^{\mathrm{flat}})$ exhibiting the flat CR structure with constant reduced modified symbol $\mathfrak g^{0,\mathrm{red}}$ as described in Section \ref{reduction_and_nongenericity_section}.

\begin{example}\label{7d nonregular}
Let $\mathfrak{g}_{-}$ be the five dimensional Heisenberg algebra with a basis $(e_0,\ldots,e_4)$ whose nonzero brackets are given by
\[
[e_1,e_4]=[e_2,e_3]=e_0.
\]
The basis $(e_1,\ldots, e_4)$ spans $\mathfrak{g}_{-1}$, and we define
\[
\mathfrak{g}_{-1,-1}:=\mathrm{span}_{\C}\{e_1, e_2\}
\quad\mbox{ and }\quad
\mathfrak{g}_{-1,1}:=\mathrm{span}_{\C}\{e_3, e_4\}.
\]
Representing elements of $\mathfrak{csp}(\mathfrak{g}_{-1})$ as matrices with respect to $(e_1,\ldots, e_4)$, we define $\mathfrak{g}_{0}^{\mathrm{red}}$ to be the subspace of $\mathfrak{csp}(\mathfrak{g}_{-1})$ spanned by the three matrices
\begin{equation}
\label{example1mat}
\left(
\begin{array}{cc:cc}
  0 & \frac{i}{\sqrt{2}} & 0 & i \\
  \frac{1}{\sqrt{2}} & 0 & 1 & 0 \\[5pt]\hdashline
  0 & 0 & 0 & \frac{-i}{\sqrt{2}} \rule{0pt}{2.6ex}\\
  0 & 0 & \frac{-1}{\sqrt{2}} & 0 \\
\end{array}
\right)
\quad\mbox{ and }\quad
\left(
\begin{array}{cc:cc}
  0 & \frac{i}{\sqrt{2}} & 0 & 0 \\
  \frac{-1}{\sqrt{2}} & 0 & 0 & 0 \\[5pt]\hdashline
  0 & -i & 0 & \frac{-i}{\sqrt{2}} \rule{0pt}{2.6ex}\\
  1 & 0 & \frac{1}{\sqrt{2}} & 0 \\
\end{array}
\right)
\end{equation}
and the $4\times 4$ identity matrix. With these definitions set, 
$\mathfrak{g}^{0,\mathrm{red}}=\mathfrak{g}_{-}\rtimes \mathfrak{g}_{0}^{\mathrm{red}}$ is a Lie algebra and it is an abstract reduced modified symbol of type $\mathfrak g^0$ in the sense of Definition \ref{abstractred}, where $\mathfrak{g}^0$ is the CR symbol with component $\mathfrak{g}_{0,2}$ generated by the matrix obtained  from the first matrix in \eqref{example1mat} by setting diagonal $2\times 2$ blocks equal to zero. In other words, as the matrix $C_1$ in \eqref{matrixg0pm2}, we can take 
\begin{equation}
\label{EG1 C1} C_1=
\left(
\begin{array}{cc}
     0&i  \\
     1&0 
\end{array}
\right).
\end{equation}
We also need to describe the antilinear involution $\sigma:\mathfrak{g}^{0,\mathrm{red}}\to \mathfrak{g}^{0,\mathrm{red}}$ associated with this model's CR structure. On $\mathfrak{g}_{-}$, the map $\sigma$ is defined by
\[
\sigma(e_0)=e_0,\quad \sigma(e_1)=e_3,\quad \sigma(e_2)=e_4,
\]
which uniquely defines an antilinear operator on $\mathfrak{g}_{-}$. We extend $\sigma$ to an antilinear involution on $\mathfrak{csp}(\mathfrak{g}_{-1})$ by the rule
\begin{align}\label{involution extension}
\sigma(\psi)(x)=\sigma\big(\psi\circ \sigma(x)\big).
\end{align}
By Remark \ref{regmatrixrem} the CR symbol $\mathfrak g^0$ is not regular, as $C_1\overline C_1 C_1 \notin \C C_1$.

Consider the flat CR structure $H^{\mathrm{flat}}$ defined on $M_0$ with  constant reduced modified symbol $\mathfrak{g}_{0}^{\mathrm{red}}$ as described in section \ref{reduction_and_nongenericity_section}. Let us now explicitly describe $P^0$, the modified CR symbols, and level sets of the mapping $\psi\mapsto \theta_0(T_\psi P^0)$ associated with this CR structure.

Let $G^{0,\mathrm{red}}$ be as described in Section \ref{reduction_and_nongenericity_section} and let $q:G^{0,\mathrm{red}}\to M_0^\C$ be the natural projection.
Let us first show that $G^{0,\mathrm{red}}$ can be naturally embedded into $P^0$.
The reduced modified symbol $\mathfrak{g}^{0,\mathrm{red}}$ is, 
by construction, canonically identified with the tangent space $T_eG^{0,\mathrm{red}}$ of $G^{0,\mathrm{red}}$ at the identity, and, more generally, via the differential of the left translation $L_h$ by an element 
$h\in G^{0,\mathrm{red}} $, we canonically identify $\mathfrak{g}^{0,\mathrm{red}}$ with the tangent space of $G^{0,\mathrm{red}}$ at $h$. In particular, for each $g\in G^{0,\mathrm{red}}$, these identifications can be restricted to give canonical isomorphisms $\psi_h:\mathfrak{g}_{-}\to {D}^{\mathrm{flat}}_{-}\big(q(h)\big)$, where $\psi_h=q_*\circ \left.(L_h)_*\right|_{\mathfrak{g}_{-}}$ and
\[
{D}^{\mathrm{flat}}_{-}\big(q(h)\big):=\bigoplus_{(i,j),\, i<0}{D}^{\mathrm{flat}}_{i,j}\big(q(h)\big).
\] 
The map $h\mapsto \psi_h$ defines the required embedding of $G^{0,\mathrm{red}}$ into $P^0$. In the sequel, we identify $G^{0,\mathrm{red}}$ with its image under this embedding.

Using \eqref{EG1 C1} together with $H_\ell=
\left(
\begin{array}{cc}
     0&1  \\
     1&0 
\end{array}
\right)$ and the identification in \eqref{SPinclusion1}, it is easy to show that $\mathfrak{g}_{0,0}$ is a $2$-dimensional subspace of $\mathfrak{csp}(\mathfrak{g}_{-})$ spanned by
\begin{align}\label{EG1 g_00 generators}
x_1=
\left(
\begin{array}{cc}
     \mathrm{Id}_2&0  \\
     0&-\mathrm{Id}_2 
\end{array}
\right)
\quad\mbox{ and }\quad 
x_2=\mathrm{Id}_4
\end{align}
where $\mathrm{Id}_k$ denotes the $k\times k$ identity matrix. The element $x_2$ in \eqref{EG1 g_00 generators} belongs to $\mathfrak{g}^{0,\mathrm{red}}$, and is actually the grading element referred to in the paragraph immediately following Theorem \ref{maintheorconst}. Counting dimensions, $G^{0,\mathrm{red}}$ has codimension 1 in $P^0$. 
Recall that $P^0$ is a $G_{0,0}$-bundle over $M_0$. Using the identification in  \eqref{SPinclusion1}, we consider the one-parametric subgroup $\mathrm{exp}(c x_1)\subset G_{0,0}$, so then $P^0$ can described by 
\begin{align}\label{EG1 frame bundle}
P^0= \{\psi\circ \mathrm{exp}(c x_1)\, |\,\psi\in G^{0,\mathrm{red}}, \, c\in\C\}.
\end{align}

For a given $\psi\in P^0$ such that 
\begin{equation}
\label{psi_c}
\psi=\psi_0 \circ \mathrm{exp}( c x_1),  \quad \psi_0\in G^{0, \mathrm{red}}  
\end{equation}
the degree zero component of the modified symbol $\mathfrak{g}_{0}^{\mathrm{mod}}(\psi)$ is spanned by the two matrices in \eqref{EG1 g_00 generators} together with the two matrices
\begin{align}\label{EG1 modified symbols}
\left(
\begin{array}{cccc}
  0 & \frac{i}{\sqrt{2}}e^{2c} & 0 & i \\
  \frac{1}{\sqrt{2}}e^{2c} & 0 & 1 & 0 \\
  0 & 0 & 0 & \frac{-i}{\sqrt{2}}e^{2c} \\
  0 & 0 & \frac{-1}{\sqrt{2}}e^{2c} & 0 \\
\end{array}
\right)
\quad\mbox{ and }\quad
\left(
\begin{array}{cccc}
  0 & \frac{i}{\sqrt{2}}e^{-2c} & 0 & 0 \\
  \frac{-1}{\sqrt{2}}e^{-2c} & 0 & 0 & 0 \\
  0 & -i & 0 & \frac{-i}{\sqrt{2}}e^{-2c} \\
  1 & 0 & \frac{1}{\sqrt{2}}e^{-2c} & 0 \\
\end{array}
\right).
\end{align}
Clearly, the level sets of the mapping $\psi\mapsto \theta_0(T_\psi P^0)$ are parameterized by the value $e^{2c}$ appearing in \eqref{EG1 modified symbols}. The image of each tangent space to one of these level sets under the soldering form $\theta_0$ is the space spanned by the matrices in \eqref{psi_c} together with the matrix $x_2$ in \eqref{EG1 g_00 generators}, which is the reduced modified symbol corresponding to that level set. The space $G^{0,\mathrm{red}}$ is a connected component of the level set corresponding to $e^{2c}=1$, which has two connected components. So, we started with an abstract reduced modified symbol $g^{0, \mathrm{red}}$ and we have shown that it is indeed the reduced modified symbol of  the level set $P^{0,\mathrm{red}}$ corresponding to $e^{2c}=1$. Consequently,  Theorem \ref{maintheorconst} can be applied  to the CR structure $H^{\mathrm{flat}}$ on $M_0$ to obtain that this homogeneous model's symmetry group has dimension equal to $\dim_\C \mathfrak{u}(\mathfrak{g}^{0,\mathrm{red}})=8$, where this formula follows from a direct calculation that $\mathfrak{u}(\mathfrak{g}^{0,\mathrm{red}})=\mathfrak{g}^{0,\mathrm{red}}$. By construction, in fact, $G^{0,\mathrm{red}}$ is the connected component of the symmetry group containing the identity.

Note that in the reduction of $P^0$ we can also use  other level sets of the mapping $\psi\mapsto \theta_0(T_\psi P^0)$ to obtain a different reduced modified symbol isomorphic to $\mathfrak{g}^{0,\mathrm{red}}$ from which we could build this same homogeneous model, but we have to make sure that the chosen level set has a nonempty intersection  with the real part $\Re P^0$ of the bundle $P^0$, which happens if and only if the space  $\theta_0(T_\psi P^0)$ is invariant under involution on $\mathfrak{csp}(\mathfrak{g}_{-1})$. The latter holds  if and only if $\Re c=0$, and hence $\Re P^0$ belongs to the subset of $\mathrm{pr}^{-1}(M_0)\subset P^0$ containing points at which the modified symbol is characterized by \eqref{EG1 modified symbols} with $\Re c=0$.
\end{example}

\begin{example}\label{9d regular not maximal for its symbol}

Let $\mathfrak{g}_{-}$ be the $7$-dimensional Heisenberg algebra with a basis $(e_0,\ldots,e_6)$ whose nonzero brackets are given by
\[
[e_1,e_6]=[e_2,e_5]=[e_3,e_4]=e_0.
\]
The basis $(e_1,\ldots, e_6)$ spans $\mathfrak{g}_{-1}$, and we define
\[
\mathfrak{g}_{-1,-1}:=\mathrm{span}_{\C}\{e_1, e_2, e_3\}
\quad\mbox{ and }\quad
\mathfrak{g}_{-1,1}:=\mathrm{span}_{\C}\{e_4, e_5,e_6\}.
\]
Representing elements of $\mathfrak{csp}(\mathfrak{g}_{-1})$ as matrices with respect to $(e_1,\ldots, e_6)$, we define $\mathfrak{g}_{0}^{\mathrm{red}}$ to be the subspace of $\mathfrak{csp}(\mathfrak{g}_{-1})$ spanned by
\begin{align}\label{EG2 adK and adKbar}
\left(
\begin{array}{ccc:ccc}
  0 & 1 & 0 & 0 & 1 & 0 \\
  0 & 0 & 0 & 0 & 0 & 1 \\
  0 & 0 & 0 & 0 & 0 & 0 \\\hdashline
  0 & 0 & 0 & 0 & 0 & 0 \rule{0pt}{2.2ex}\\
  0 & 0 & 0 & 0 & 0 & -1 \\
  0 & 0 & 0 & 0 & 0 & 0 \\
\end{array}
\right)
\quad \mbox{ and }\quad
\left(
\begin{array}{ccc:ccc}
    0 & 0 & 0 & 0 & 0 & 0  \\
    0 & 0 & -1 & 0 & 0 & 0  \\
    0 & 0 & 0 & 0 & 0 & 0  \\\hdashline
    0 & 1 & 0 & 0 & 1 & 0 \rule{0pt}{2.2ex}\\
    0 & 0 & 1 & 0 & 0 & 0  \\
    0 & 0 & 0 & 0 & 0 & 0  \\
\end{array}
\right),
\end{align}
together with
\begin{align}\label{EG2 zeroZeroReduction}
\mathfrak{g}_{0,0}^{\mathrm{red}}=\left\{
\left.
\left(
\begin{array}{cccccc}
    c_1+c_2 & 0 & c_4 & 0 & 0 & 0  \\
    0 & c_1 & 0 & 0 & 0 & 0  \\
    0 & 0 & c_1+c_3 & 0 & 0 & 0  \\
    0 & 0 & 0 & c_1-c_3 & 0 & -c_4 \\
    0 & 0 & 0 & 0 & c_1 & 0  \\
    0 & 0 & 0 & 0 & 0 & c_1-c_2  \\
\end{array}
\right)
\,\right|\,
c_i\in \C
\right\}.
\end{align}
For this example, we again consider the flat CR structure $H^{\mathrm{flat}}$ defined on $M_0$ with  constant reduced modified symbol $\mathfrak{g}_{0}^{\mathrm{red}}$ and associated Lie group $G^{0,\mathrm{red}}$ as described in section \ref{reduction_and_nongenericity_section}. Similar to the calculations for Example \ref{7d nonregular}, we calculate $\mathfrak{g}_{0,0}$ explicitly using \eqref{intersection algebra} with 
\[
C_1=
\left(
\begin{array}{ccc}
     0&1&0  \\
     0&0&1  \\
     0&0&0
\end{array}
\right)
\quad\mbox{ and }\quad
H_\ell=
\left(
\begin{array}{ccc}
     0&0&1  \\
     0&1&0  \\
     1&0&0
\end{array}
\right)
\]
to obtain that $\mathfrak{g}_{0,0}$ is spanned by matrices of the form in \eqref{EG2 zeroZeroReduction} together with
\begin{align}\label{EG2 g_00 generators}
x_1=\left(
\begin{array}{cc}
     \mathrm{Id}&0  \\
     0&-\mathrm{Id}
\end{array}
\right).
\end{align}
Note that, by Remark \ref{regmatrixrem}, the CR symbol of $H^{\mathrm{flat}}$ is regular at every point because $C_1\overline{C_1}C_1=0$. Now by using \eqref{EG2 g_00 generators} instead of \eqref{EG1 g_00 generators} the formulas in \eqref{EG1 frame bundle} and \eqref{psi_c} apply for our present example. In particular, for a point $\psi\in P^0$ satisfying \eqref{psi_c}, the degree zero component of the modified symbol $\mathfrak{g}_{0}^{\mathrm{mod}}(\psi)$ is spanned by the matrices in \eqref{EG2 adK and adKbar} and \eqref{EG2 g_00 generators} together with the two matrices
\begin{align}\label{EG2 modified generators}
\quad
\left(
\begin{array}{cccccc}
  0 & e^{2c} & 0 & 0 & 1 & 0 \\
  0 & 0 & 0 & 0 & 0 & 1 \\
  0 & 0 & 0 & 0 & 0 & 0  \\
  0 & 0 & 0 & 0 & 0 & 0\\
  0 & 0 & 0 & 0 & 0 & -e^{2c} \\
  0 & 0 & 0 & 0 & 0 & 0 \\
\end{array}
\right)
\quad \mbox{ and }\quad
\left(
\begin{array}{cccccc}
    0 & 0 & 0 & 0 & 0 & 0  \\
    0 & 0 & -e^{-2c} & 0 & 0 & 0  \\
    0 & 0 & 0 & 0 & 0 & 0  \\
    0 & 1 & 0 & 0 & e^{-2c} & 0 \\
    0 & 0 & 1 & 0 & 0 & 0  \\
    0 & 0 & 0 & 0 & 0 & 0  \\
\end{array}
\right).
\end{align}
As in Example \ref{7d nonregular}, the level sets of $P^0$ are parameterized by the values $e^{2c}$ appearing in \eqref{EG2 modified generators}, and the level sets having nontrivial intersection with $\Re P^0$ are those for which the corresponding parameter $e^{2c}$ satisfies $\Re(c)=0$.
The Tanaka prolongation $\mathfrak{u}(\mathfrak{g}^{0,\mathrm{red}})$ is 14-dimensional with a 1-dimensional positively graded component.
\end{example}

\begin{example}\label{9d regular maximal for its symbol}
This third example should be contrasted with Example \ref{9d regular not maximal for its symbol} and compared to the constructions in \cite{porter2017absolute}. For this example, let $\mathfrak{g}^0$ be the CR symbol of the structure in Example \ref{9d regular not maximal for its symbol} as characterized in \eqref{CRsymbol_2}, and set $\mathfrak{g}^{0,\mathrm{red}}=\mathfrak{g}^0$. Consider the flat structure $H^{\mathrm{flat}}$ on the homogeneous model $M_0$  constructed from $\mathfrak{g}^{0,\mathrm{red}}$ in Section \ref{reduction_and_nongenericity_section}. Note that since the CR symbol in Example \ref{9d regular not maximal for its symbol} is regular, $\mathfrak{g}_{0}^{\mathrm{red}}$ defined this way is indeed a Lie algebra. In contrast to Example \ref{9d regular not maximal for its symbol}, the model $(M_0,H^{\mathrm{flat}})$ has constant modified symbol, and, again in contrast to Example \ref{9d regular not maximal for its symbol}, the construction of the absolute parallelism for $(M_0,H^{\mathrm{flat}})$ given in this text is equivalent to the construction given in \cite{porter2017absolute}. The Tanaka prolongation $\mathfrak{u}(\mathfrak{g}^{0,\mathrm{red}})$ for this example is 16-dimensional with a 2-dimensional positively graded component. The prolongation $\mathfrak{u}(\mathfrak{g}^{0,\mathrm{red}})$ turns out to be equivalent to the bigraded prolongation introduced in \cite[Definition 2.2]{porter2017absolute} whose dimension is given in \cite[Theorem 5.3]{porter2017absolute}. This equivalence is not incidental, but rather a consequence of recoverability. That is, a CR structure is recoverable if and only if its associated bigraded prolongation is equivalent to the corresponding usual Tanaka prolongation of its CR symbol. As a consequence, just as happens with this example, the methods for constructing the absolute parallelism given here and in \cite{porter2017absolute} are equivalent for any recoverable structure having a constant modified symbol (which, by Theorem \ref{constant mod symbols implies regular}, implies that the modified symbol equals its CR symbol and its CR symbol is regular).
\end{example}

\section{Proof of Theorem \ref{maintheorm}}\label{Absolute parallelisms}
In this section, 
we modify methods from the theory of Noboru Tanaka's prolongation procedure described in \cite{zelenko2009tanaka} in order to prove Theorem \ref{maintheorm}. In particular, we describe the modifications necessary to obtain the bundles $\{P^{i}\}_{i=1}^\infty$ corresponding to \eqref{bundles}, which are required due to the non-constancy of $\mathfrak g_0^{\mathrm{mod}}(\psi)$. For structures with constant modified CR symbols, however, the standard Tanaka prolongation procedure can be applied directly without modification. The key modifications appear in the constructions of $P^1$ and $P^2$, and we construct these explicitly. Each higher degree prolongation $P^k$ is obtained from $P^{k-1}$ in the same way that $P^2$ is obtained from $P^{1}$.

\subsection{Constructing the first geometric prolongation}\label{A structure function on $P_0$} 

Let $\Pi_0:P^0\to M$ denote the natural projection. The contact structure $\mathcal{D}$ on $\mathcal{N}$ lifts to a filtration $D^0_0\subset D^{-1}_0\subset D^{-2}_0$ of $T P^0$ given by
\[
D^0_0=(\Pi_0)_*^{-1}(0),\quad D^{-1}_0=(\Pi_0)_*^{-1}(\mathcal{D}),\quad\mbox{ and }\quad D^{-2}_0=TP^0,
\]
and we use the notation
\[
D_0^{i}(\psi):=D_0^{i}\cap T_{\psi}P^0.
\]
Informally, our first goal in this section is to define \emph{structure functions} on $P^0$, which are elements in 
\begin{align}
\mathcal{S}_0=\Hom\left(\mathfrak{g}_{-1}\otimes \mathfrak{g}_{-2},\mathfrak{g}_{-2}\right)\oplus \Hom\left(\mathfrak{g}_{-1}\wedge \mathfrak{g}_{-1},\mathfrak{g}_{-1}\right)
\end{align}
associated with  horizontal subspaces in $TP^0$, and we will use this as a tool for finding a family of objects similar to Ehresmann connections on $P^0$ that is naturally associated with the underlying CR structure in a certain sense. But to be more precise, rather than associating horizontal subspaces with a structure function -- as is done in the theory of $G$-structures -- the structure functions we introduce will be associated with graded horizontal subspaces, namely pairs of subspaces of the form $\mathcal{H}_{\psi}=(H_{\psi}^{-2},H_{\psi}^{-1})$ where $\psi$ is a point in $P^0$, 
\begin{align}\label{horizontal subspace pair a}
H_{\psi}^{-2}\subset D_0^{-2}(\psi)/D_0^{0}(\psi),\quad H_{\psi}^{-1}\subset D_0^{-1}(\psi),
\end{align}
\begin{align}\label{horizontal subspace pair b}
D_0^{-2}(\psi)/D_0^{0}(\psi)=H_2\oplus D_0^{-1}(\psi)/D_0^{0}(\psi),\quad\mbox{ and }\quad D_0^{-1}(\psi)=H_{\psi}^{-1}\oplus D_0^0(\psi).
\end{align}

Notice that $D_0^{0}$ is the domain of the map $\theta_0:D_0^0\to \mathfrak{csp}(\mathfrak{g}_{-1})$ introduced in \eqref{degree zero soldering form section 4}. We call $\theta_0$ the degree zero soldering form on $P^0$ and introduce additional soldering forms $\theta_{-1}:D_0^{-1}\to\mathfrak{g}_{-1}$ and $\theta_{-2}:D_0^{-2}\to\mathfrak{g}_{-2}$ as follows. 
The projection $\Pi_0$ naturally induces a linear map from $D_0^{-2}(\psi)/D_0^{-1}(\psi)\oplus D_0^{-1}(\psi)/D_0^{0}(\psi)\oplus D_0^{0}(\psi)\to \mathcal{D}_{\Pi_0(\psi)}\oplus T_{\Pi_0(\psi)}\mathcal{N}/\mathcal{D}_{\Pi_0(\psi)}$. Using these induced maps and letting 
\[
\pi_0^{-2}:D_0^{-2}\to D_0^{-2}/D_0^{-1}\quad\mbox{ and }\quad \pi^{-1}_0:D_0^{-1}\to D_0^{-1}/D_0^{0}
\]
denote the natural projections, for a point  $\psi$ in $P_0$, we define
\[
\theta_{-1}(v):=\psi^{-1}\circ (\Pi_0)_*\circ \pi^{-1}_0 (v)\quad\quad\forall\, v\in D_0^{-1}(\psi),
\]
and
\[
\theta_{-2}(v):=\psi^{-1}\circ (\Pi_0)_*\circ \pi^{-2}_0 (v) \quad\quad\forall\, v\in D_0^{-2}(\psi).
\]
We have the associated maps $\overline{\theta}_{-2}:D_0^{-2}/D_0^{-1}\to\mathfrak{g}_{-2}$, $\overline{\theta}_{-1}:D_0^{-1}/D_0^{0}\to\mathfrak{g}_{-1}$, and $\overline{\theta}_{0}:D_0^{0}\to\mathfrak{csp}(\mathfrak{g}_{-1})$ given by
\[
\overline{\theta}_{i}\big(\pi_0^i(v)\big):=\theta_i(v)
\]
where $\pi_0^0:D_0^0\to D_0^0$ is taken to be the identity map.
\begin{remark}
These soldering forms $\{\theta_i\}$ are similar to the soldering forms introduced in \cite[Section 3]{zelenko2009tanaka}, but a subtle difference is that the space $\theta_0\left(D_0^0(\psi) \right)$ depends on $\psi$. For further reference, the forms $\theta_{-2}$ and $\theta_{-1}$ are close analogues of the forms labeled as $\theta^{(0)}_{-2}$ and $\theta^{(0)}_{-1}$ in \cite{tanaka1967generalized}.
\end{remark}
For a graded horizontal space $\mathcal{H}_\psi$ of the form satisfying \eqref{horizontal subspace pair a} and \eqref{horizontal subspace pair b}, we define  $\text{pr}_{-2}^{\mathcal{H}_\psi}:D_0^{-2}(\psi)/D_0^{0}(\psi)\to D_0^{-1}(\psi)/D_0^{0}(\psi)$ and $\text{pr}_{-1}^{\mathcal{H}_\psi}:D_0^{-1}(\psi)\to D_0^{0}(\psi)$ to be the projections parallel to the subspaces $H^{-2}_\psi$ and $H^{-1}_\psi$ respectively, and define the map
\[
\phi^{\mathcal{H}_\psi}\in \Hom\left(\mathfrak{g}_{-2},D_0^{-2}(\psi)/D_0^{0}(\psi)\right)\oplus \Hom\left(\mathfrak{g}_{-1},D_0^{-1}(\psi)\right)\oplus \Hom\left(\theta_0\left(D_0^{0}(\psi)\right),D_0^{0}(\psi)\right)
\]
by
\[
\phi^{\mathcal{H}_\psi}(v):=
\begin{cases}
\left((\Pi_0)_*\circ \left.\pi_0^{-2}\right|_{H^{-2}_\psi}\right)^{-1}\circ \psi(v) & \mbox{ if }v\in \mathfrak{g}_{-2}\\
\left((\Pi_0)_*\circ \left.\pi_0^{-1}\right|_{H^{-1}_\psi}\right)^{-1}\circ \psi(v) & \mbox{ if }v\in \mathfrak{g}_{-1}\\
\left(\left.\theta_0\right|_{D_0^0(\psi)}\right)^{-1}(v) & \mbox{ if }v\in\theta_0\left(D_0^{0}(\psi)\right).
\end{cases}
\]
We can now define the structure function $S_{\mathcal{H}_\psi}\in \mathcal{S}_0$ associated with ${\mathcal{H}_\psi}$ by the formula
\begin{align}\label{structure function a}
S_{\mathcal{H}_\psi}(v_1,v_2):=
\begin{cases}
\overline{\theta}_{-2}\left([Y_1,Y_2](\psi)+D_{0}^{-1}(\psi)\right)&\mbox{ if }v_2\in \mathfrak{g}_{-2}\\
\overline{\theta}_{-1}\left(\text{pr}_{-2}^{\mathcal{H_\psi}}\left([Y_1,Y_2](\psi)+D_{0}^{0}(\psi)\right)\right)&\mbox{ if }v_2\in \mathfrak{g}_{-1}
\end{cases}
\end{align} 
where $Y_1$ and $Y_2$ are vector fields defined on a neighborhood of $\psi$ in $P^0$ such that, supposing $v_2\in\mathfrak{g}_{i}$ for $i\in\{-1,-2\}$, 
\begin{align}\label{structure function b}
Y_1\in\Gamma(D_0^{-1}), \quad Y_2\in\Gamma(D_0^{i}),\quad \theta_{-1}(Y_1)=v_1,\quad \theta_{i}(Y_2)=v_2,\quad (Y_1)_{\psi}=\phi^{\mathcal{H}_{\psi}}(v_1),
\end{align}
and either
\begin{align}\label{structure function c}
\left\{\parbox{8.5cm}{(a) $\quad i=-1$ and $(Y_2)_{\psi}=\phi^{\mathcal{H}_{\psi}}(v_2)$, or \\
(b) $\quad i=-2$ and $(Y_2)_{\psi}\equiv\phi^{\mathcal{H}_{\psi}}(v_2)\pmod{D_0^0(\psi)}$.}\right.
\end{align}
The definition of $S_{\mathcal{H}_\psi}$ given in \eqref{structure function a}, \eqref{structure function b}, and \eqref{structure function c} coincides with the definition in \cite[(3.5)]{zelenko2009tanaka}, wherein the following lemma is proven.

\begin{lemma}[proven in {\cite[Section 3]{zelenko2009tanaka}}]
The definition of $S_{\mathcal{H}_\psi}$ given in \eqref{structure function a} does not depend on the choice of vector fields $Y_1$ and $Y_2$ satisfying \eqref{structure function b} and \eqref{structure function c}.
\end{lemma}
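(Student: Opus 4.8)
The plan is to reduce the independence claim to two separate observations: invariance of the right-hand side of \eqref{structure function a} under a change of $Y_1$ with $Y_2$ held fixed, and under a change of $Y_2$ with $Y_1$ held fixed; the general case then follows by composing the two changes. The workhorse for both reductions is the elementary fact that if a vector field $X$ on $P^0$ vanishes at $\psi$ and $X$ is a section of a distribution $\mathcal E$, then $[X,Y](\psi)\in\mathcal E_\psi$ for every vector field $Y$: writing $X=\sum_a f_a Z_a$ in terms of a local frame $\{Z_a\}$ of $\mathcal E$ with $f_a(\psi)=0$, one has $[X,Y](\psi)=-\sum_a (Yf_a)(\psi)\,Z_a(\psi)$. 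The second ingredient is the filtration compatibility $[\Gamma(D_0^0),\Gamma(D_0^{-1})]\subset\Gamma(D_0^{-1})$, which holds because $D_0^{-1}$ is the $\Pi_0$-pullback of a distribution on the base: if $\alpha=\Pi_0^*\beta$ is one of the one-forms cutting out $D_0^{-1}$ and $V\in\Gamma(D_0^0)=\Gamma(\ker(\Pi_0)_*)$, then $\mathcal L_V\alpha=0$, whence $\alpha$ is annihilated on $[\Gamma(D_0^0),\Gamma(D_0^{-1})]$.

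Concretely, let $Y_1,\tilde Y_1$ and $Y_2,\tilde Y_2$ be two choices satisfying \eqref{structure function b} and \eqref{structure function c}, and set $W:=Y_1-\tilde Y_1$ and $W':=Y_2-\tilde Y_2$. Since $\theta_{-1}(Y_1)=\theta_{-1}(\tilde Y_1)=v_1$ and $(Y_1)_\psi=(\tilde Y_1)_\psi=\phi^{\mathcal H_\psi}(v_1)$, the field $W$ is a section of $D_0^0$ vanishing at $\psi$. In the case $v_2\in\mathfrak g_{-1}$ the constraints on $Y_2$ in \eqref{structure function c}(a) have exactly the same shape, so $W'\in\Gamma(D_0^0)$ with $W'(\psi)=0$ as well. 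Expanding $[Y_1,Y_2]=[\tilde Y_1,\tilde Y_2]+[\tilde Y_1,W']+[W,\tilde Y_2]+[W,W']$, the workhorse fact shows that each of the last three brackets evaluated at $\psi$ lies in $D_0^0(\psi)$: for $[W,\tilde Y_2]$ and $[W,W']$ because $W$ vanishes at $\psi$ and is a section of $D_0^0$, and for $[\tilde Y_1,W']$ because $W'$ vanishes at $\psi$ and is a section of $D_0^0$. Hence all three contributions vanish modulo $D_0^0(\psi)$ and drop out of $S_{\mathcal H_\psi}(v_1,v_2)=\overline{\theta}_{-1}\bigl(\mathrm{pr}_{-2}^{\mathcal H_\psi}([Y_1,Y_2](\psi)+D_0^0(\psi))\bigr)$. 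The same argument handles the change of $Y_1$ in the case $v_2\in\mathfrak g_{-2}$: there $[W,\tilde Y_2](\psi)$ and $[W,W'](\psi)$ lie in $D_0^0(\psi)\subset D_0^{-1}(\psi)$, hence are killed by $\overline{\theta}_{-2}(\,\cdot\,+D_0^{-1}(\psi))$.

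The one place where more care is needed, and the step I expect to be the main obstacle, is the change of $Y_2$ in the case $v_2\in\mathfrak g_{-2}$. Here $\theta_{-2}(Y_2)=\theta_{-2}(\tilde Y_2)=v_2$ forces only $W'\in\Gamma(D_0^{-1})$, and the normalization $(Y_2)_\psi\equiv(\tilde Y_2)_\psi\pmod{D_0^0(\psi)}$ gives $W'(\psi)\in D_0^0(\psi)$ rather than $W'(\psi)=0$, so the workhorse fact does not apply directly. I would resolve this by splitting $W'=V+U$ where $V\in\Gamma(D_0^0)$ is chosen with $V(\psi)=W'(\psi)$ and $U:=W'-V\in\Gamma(D_0^{-1})$ vanishes at $\psi$. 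Then $[\tilde Y_1,U](\psi)\in D_0^{-1}(\psi)$ by the workhorse fact applied with $\mathcal E=D_0^{-1}$, while $[\tilde Y_1,V](\psi)\in D_0^{-1}(\psi)$ by the filtration compatibility $[\Gamma(D_0^{-1}),\Gamma(D_0^0)]\subset\Gamma(D_0^{-1})$. Therefore $[\tilde Y_1,W'](\psi)\in D_0^{-1}(\psi)$, so it is annihilated by $\overline{\theta}_{-2}(\,\cdot\,+D_0^{-1}(\psi))$, which finishes the verification. This is precisely the argument carried out in \cite[Section 3]{zelenko2009tanaka}; note that the subtlety flagged in the Remark above, namely that $\theta_0(D_0^0(\psi))$ varies with $\psi$, is irrelevant here since all of the vector fields are compared only at the single fixed point $\psi$.
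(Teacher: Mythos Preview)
Your argument is correct and is essentially the standard one from \cite[Section 3]{zelenko2009tanaka}, to which the paper simply defers without reproducing a proof. The two ingredients you isolate --- the fact that a section of a distribution $\mathcal E$ vanishing at a point has all brackets at that point in $\mathcal E_\psi$, and the filtration compatibility $[\Gamma(D_0^0),\Gamma(D_0^{-1})]\subset\Gamma(D_0^{-1})$ coming from $D_0^{-1}$ being a pullback distribution --- together with your splitting $W'=V+U$ in the $v_2\in\mathfrak g_{-2}$ case, match the reasoning in the cited reference, and your closing remark that the non-constancy of $\theta_0(D_0^0(\psi))$ plays no role here is also to the point.
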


Considering another graded horizontal space $\widetilde{\mathcal{H}}_\psi$, let us describe the difference between the structure functions $S_{\mathcal{H}_{\psi}}$ and $S_{\widetilde{\mathcal{H}}_{\psi}}$. For this we introduce the function 
\[
f_{\mathcal{H}_{\psi} \widetilde{\mathcal{H}}_{\psi}}\in \Hom(\mathfrak{g}_{-2},\mathfrak{g}_{-1})\oplus\Hom\left(\mathfrak{g}_{-1},\theta_0\left(D_0^{0}(\psi)\right)\right)
\]
defined by
\[
f_{\mathcal{H}_{\psi} \widetilde{\mathcal{H}}_{\psi}}(v)=
\overline{\theta}_{i+1}\left(\phi^{\mathcal{H}_\psi}(v)-\phi^{\widetilde{\mathcal{H}}_\psi}(v)\right)\quad\quad\forall\, v\in \mathfrak{g_i}\mbox{ and } i\in\{-1,-2\},
\]
and introduce the \emph{anti-symmetrization (or generalized Spencer) operator} 
\[
\partial_{\psi}^0:\Hom(\mathfrak{g}_{-2},\mathfrak{g}_{-1})\oplus\Hom\left(\mathfrak{g}_{-1},\theta_0\left(D_0^{0}(\psi)\right)\right)\to \mathcal{S}_0
\]
defined by
\[
\partial_{\psi}^0 f(v_1,v_2):=[f(v_1),v_2]+[v_1,f(v_2)]-f([v_1,v_2]),
\]
where the brackets $[\cdot, \cdot]$ are defined by the Lie algebra structure on $\mathfrak{g}_{-}\rtimes \mathfrak{csp}(\mathfrak{g}_{-1})$. It is shown in \cite[Proposition 3.1]{zelenko2009tanaka} that
\begin{align}\label{structure function transformation rule}
S_{\mathcal{H}_{\psi}}=S_{\widetilde{\mathcal{H}}_{\psi}}+\partial_\psi^0 f_{\mathcal{H}_{\psi} \widetilde{\mathcal{H}}_{\psi}}.
\end{align}

In standard Tanaka theory, one defines the so-called first geometric prolongation of $P^0$, which is a certain fiber bundle $P^1$ defined over the base space $P^0$, but here instead we define an analogous first prolongation as a  bundle $P^1$ over a neighborhood $\mathcal{O}^0$ in $P^0$. For defining this, let $\psi_0\in \Re P^0$ be as in the item (1) of Theorem \ref{maintheorm}. By regularity of $\psi_0$ there exists an open neighborhood $\mathcal{O}^0\subset P^0$  of $\psi_0$ such that there exists a subspace $N_0\subset \mathcal{S}_0$ for which
\begin{align}\label{gzero normalization b}
\mathcal{S}_0=N_0\oplus \text{Im}\partial_{ \psi}^0 \quad\quad\forall\, \psi\in \mathcal{O}^0.
\end{align}
Moreover, the natural involutions on each previously defined $\mathfrak g_i$ induce  the natural involution on the space $\mathcal S_0$ and also $\mathrm{Im}\partial_{ \psi}^0$ is invariant under this involution for  $\psi$ belong to $\Re\mathcal{O}^0:=\Re P^0\cap \mathcal O^0$, based on the rule that the involution of the tensor product of two elements is the tensor product of the involution of these elements. So,  we can take $N_0$ to be invariant with respect to the involution.

The subspace $ N_0$ is called the \emph {normalization condition of the structure function} for the first prolongation and the choice of $N_0$ defines the bundle $P^1$ via the formula
\begin{align}\label{P1 definition via horizontals}
P^1:=
\left\{\mathcal{H}_{\psi}\,\left|\,\parbox{6cm}{$\psi\in\mathcal{O}^0$ and $\mathcal{H}_{\psi}$ is a pair of horizontal spaces in $T_{\psi}\mathcal{O}^0$ as described in \eqref{horizontal subspace pair a} and \eqref{horizontal subspace pair b} such that $S_{\mathcal{H}_{\psi}}\in N_0$}\right.\right\},
\end{align}
or, equivalently,
\begin{align}\label{P1 definition via maps}
P^1:=
\left\{\phi^{\mathcal{H}_{\psi}}\,\left|\,\parbox{6cm}{$\psi\in\mathcal{O}^0$ and $\mathcal{H}_{\psi}$ is a pair of horizontal spaces in $T_{\psi}\mathcal{O}^0$ as described in \eqref{horizontal subspace pair a} and \eqref{horizontal subspace pair b} such that $S_{\mathcal{H}_{\psi}}\in N_0$}\right.\right\}.
\end{align}
Since $N_0$ is invariant with respect to the involution, we have that if $\phi^{\mathcal{H}_{\psi}}\in P_1$ then $\overline{\phi^{\mathcal{H}_{\psi}}}\in P_1 $ for all $\psi\in \Re \mathcal O^0$, where
\[
\overline{\phi^{\mathcal{H}_{\psi}}}(v):=\overline{\phi^{\mathcal{H}_{\psi}}(\overline{v})}
\]
according to the rule of commuting the involution with tensor products. Hence a natural involution is defined on the fibers of $P^1$ over $\Re O^0$, and the fixed point set of this induced involution is a subspace in $P^1$ that we denote by $\Re P^1$  and call the real part of $P^1$.

By \eqref{structure function transformation rule}, if $\mathcal{H}_\psi$ and $\widetilde{\mathcal{H}}_\psi$ are two elements of $P^1$ belonging to the fiber $(P^1)_\psi$ of $P^1$ over the point $\psi\in \mathcal{O}^0$, then $\partial_{\psi}^0f_{H_\psi \widetilde{\mathcal{H}}_\psi}=0$, and hence
\[
f_{H_\psi \widetilde{\mathcal{H}}_\psi}\in\ker\partial_{\psi}^0=\mathfrak{g}_1^{\mathrm{mod}}(\psi).
\]
Conversely, if $f\in\ker \partial_{\psi}^0$ and $ \mathcal{H}_\psi$ is in the fiber $(P^1)_\psi$ of $P^1$ over $\psi$ then the graded horizontal space
\[
\{(v,w)+f(v,w)\,|\,(v,w)\in \mathcal{H}_\psi\}
\]
also belongs to $(P^1)_\psi$. In other words, by the regularity of $\psi_0$, $P^1$ is an affine bundle modeled on $\mathfrak{g}_1^{\mathrm{mod}}(\psi_0)$, and each tangent space $T_{\mathcal{H}_\psi}P^1$ is naturally identified with $\mathfrak{g}_1^{\mathrm{mod}}(\psi)$ by the map $\theta_1^{(1)}:T_{\mathcal{H}_\psi}(P^1)_\psi\to \mathfrak{g}_1^{\mathrm{mod}}(\psi)$ defined by the formula
\begin{align}\label{degree 1,1 soldering form}
X=\left.\frac{d}{dt}\right|_{t=0}\left\{\left.(v,w)+t\theta_1^{(1)}(X)(v,w)\,\right|\,(v,w)\in \mathcal{H}_\psi\right\}.
\end{align}
Similarly, $\Re P^1$ is an affine bundle over $\Re \mathcal{O}^0$ modeled on $\Re \mathfrak{g}_1^{\mathrm{mod}}(\psi_0)$.

The difference between $P^1$ defined here and the first geometric prolongation defined in the standard Tanaka theory as in \cite{zelenko2009tanaka} is that the maps $\phi^{\mathcal{H}_{\psi}}$ appearing in \eqref{P1 definition via maps} have different domains because $\mathfrak g_0^{\mathrm{mod}}(\psi)$ is non-constant. If $\mathfrak{g}_{1}^{\mathrm{mod}}(\psi_0)\neq0$ then in order to continue the prolongation procedure, constructing higher degree geometric prolongations, we need to somehow identify the domains of each $\phi^{\mathcal{H}_{\psi}}$. Moreover, independent of $\mathfrak{g}_1^{\mathrm{mod}}(\psi_0)$, ultimately we still will need to fix an identification of all $\mathfrak{g}_0^{\mathrm{mod}}(\psi)$ in order to construct canonical absolute parallelisms. All of this motivates the following introduction of what we call the \emph{identification space} $\mathcal I_0$. By regularity of $\psi_0$,  we can fix a subspace $\mathcal I_0\subset \mathfrak{csp}(\mathfrak{g}_{-1})$ invariant under the induced involution on $\mathfrak{csp}(\mathfrak{g}_{-1})$, such that, after possibly shrinking the neighborhood $\mathcal{O}^0$, in addition to \eqref{gzero normalization b}, we have
\begin{align}\label{gzero normalization a}
\mathfrak{csp}(\mathfrak{g}_{-1})=\mathcal I_0\oplus \mathfrak{g}_0^{\mathrm{mod}}(\psi) \quad\quad\forall\, \psi\in \mathcal{O}^0.
\end{align}
For all $\psi$ in $\mathcal{O}^0$, each $\mathfrak{g}_0^{\mathrm{mod}}(\psi)$ is identified with $\mathfrak{g}_0^{\mathrm{mod}}(\psi_0)$ via the projection to the latter that is parallel to $\mathcal I_0$. We let 
\[
\mathrm{pr}^{\mathcal{I}_0}:\mathfrak{g}_{-}\oplus \mathfrak{csp}(\mathfrak{g}_{-1})\to\mathfrak{g}^{0,\mathrm{mod}}(\psi_0)
\]
denote the map that is equal to the identity on $\mathfrak g_-$ and equal to the projection parallel to $\mathcal{I}_0$ on $\mathfrak{csp}(\mathfrak{g}_{-1})$.

\subsection{Constructing the second geometric prolongation}
We define the second geometric prolongation as a bundle over a neighborhood $\mathcal{O}^1$ in $P^1$ just as we defined $P^1$ as a bundle over the neighborhood $\mathcal{O}^0$ in $P^0$. For this we now introduce structure functions associated with graded horizontal spaces in $T\mathcal{O}^1$ and define $P^2$ to be the bundle of these graded horizontal spaces whose structure functions satisfy a certain normalization condition.

The filtration $D_0^0\subset D_0^{-1}\subset D_0^{-2}$ of $TP^0$ lifts to a filtration $D_1^1\subset D_1^0\subset D_1^{-1}\subset D_1^{-2}$ of $TP^1$, where, for $i\in\{0,-1,-2\}$, $D_{1}^{i}=(\Pi_1)_*^{-1}D_0^{i}$, and $D_1^1=(\Pi_1)_*^{-1}(0)$. We set $D_1^i(\mathcal{H}_\psi)=D_1^i\cap T_{\mathcal{H}_\psi}P^1$. Using the definition of $P^1$ given in \eqref{P1 definition via horizontals}, for each $\mathcal{H}_{\psi}\in P^1$ and $i\in\{-2,-1,0,1\}$, we also have soldering forms 
\[
\theta_{i}^{(1)}:D_1^i(\mathcal{H}_\psi)\to\mathfrak{g}^{0,\mathrm{mod}}(\psi)\oplus \mathfrak{g}_1^{\mathrm{mod}}(\psi)
\]
 where $\theta_{1}^{(1)}$ is as given in \eqref{degree 1,1 soldering form}, and, for $i<1$, $\theta_{i}^{(1)}(v)=\theta_{i}\circ (\Pi_1)_*(v)$. Each $\theta_{i}^{(1)}$ has the corresponding map $\overline{\theta}_{i}^{(1)}$ with domain $D_1^i(\mathcal{H}_\psi)/D_1^{i+1}(\mathcal{H}_\psi)$ defined by $\overline{\theta}_{1}^{(1)}=\theta_{1}^{(1)}$ and $\overline{\theta}_{i}^{(1)}=\theta_{i}\circ (\Pi_1)_*$.

Similar to the definition of graded horizonal subspaces in $T P^0$, for $p\in P^1$, we define graded horizontal subspaces in $T_p P^1$ as tuples of subspaces $\mathcal{H}_{p}=(H^{0}_{p},H^{-1}_{p},H^{-2}_{p})$ such that $H^{i}_p\subset D_1^{i}(p)$, $H^{0}_p\oplus D_1^1(p)= D_1^{0}(p)$,  $H^{-1}_p\oplus D_1^0(p) = D_1^{-1}(p)$, and $H^{-2}_p/ D_1^1(p)\oplus  D_1^{-1}(p)/ D_1^1(p)= D_1^{-2}(p)/ D_1^1(p)$. For each of these graded horizontal subspaces $\mathcal{H}_{p}$ in $T_pP^1$, we define  $\text{pr}_{-2}^{\mathcal{H}_p}:D_1^{-2}(p)/D_1^{1}(p)\to D_1^{0}(p)/D_1^{1}(p)$ and $\text{pr}_{-1}^{\mathcal{H}_p}:D_1^{-1}(p)\to D_1^{1}(p)$ to be the projections parallel to the subspaces $H^{-2}_p$ and  $H^{-1}_p$ respectively. Analogous to the map defined in \eqref{P1 definition via horizontals}, each graded horizontal subspace $\mathcal{H}_{p}$ in $T_pP^1$ uniquely determines an isomorphism 
\begin{align}\label{horizontal inverse of soldering forms}
    \phi^{\mathcal{H}_{p}}:\mathfrak{g}_{-}\oplus \mathfrak{g}_{0}^{\mathrm{mod}}\big(\Pi_1(p)\big)\oplus \mathfrak{g}_{1}^{\mathrm{mod}}\big(\Pi_1(p)\big)\to H^{-2}_p\oplus H^{-1}_p\oplus H^{0}_p\oplus D_{1}^1(p)
\end{align}
such that, for $i\in\{-1,-2\}$, $\phi^{\mathcal{H}_{p}}(\mathfrak{g}_{i})=H^{i}_p$, $\phi^{\mathcal{H}_{p}}\big(\mathfrak{g}_{0}^{\mathrm{mod}}(p)\big)=H^{0}_p$, and $\phi^{\mathcal{H}_{p}}\big(\mathfrak{g}_{1}^{\mathrm{mod}}(p)\big)=D_{1}^1(p)$. For a graded horizontal subspace $\mathcal{H}_{p}\subset T_pP^1$ we define its structure function $S_{\mathcal{H}_{p}}$ to be the element of
\[
\mathcal{S}_1:=\Hom\left(\mathfrak{g}_{-1}\otimes \mathfrak{g}_{-2},\mathfrak{g}_{-1}\right)\oplus \Hom\left(\mathfrak{g}_{-1}\wedge \mathfrak{g}_{-1},\mathfrak{g}_{0}^{\mathrm{mod}}(\psi_0)\right)\oplus \Hom\left(\mathfrak{g}_{-1}\otimes \mathfrak{g}_{0}^{\mathrm{mod}}(\psi_0),\mathfrak{g}_{0}^{\mathrm{mod}}(\psi_0)\right)
\]
defined by
\begin{align}\label{structure function degree 1 a}
S_{\mathcal{H}_p}(v_1,v_2):=
\begin{cases}
\overline{\theta}_{-1}^{(1)}\left(\text{pr}_{-2}^{\mathcal{H}_p}[Y_1,Y_2](p)+D_{1}^{0}(p)\right)&\mbox{ if }v_2\in \mathfrak{g}_{-2}\\
\text{pr}^{\mathcal{I}_0}\circ\overline{\theta}_{0}^{(1)}\left(\text{pr}_{-1}^{\mathcal{H}_p}\left([Y_1,Y_2](p)+D_{1}^{1}(p)\right)\right)&\mbox{ if }v_2\in \mathfrak{g}_{-1}\\
\text{pr}^{\mathcal{I}_0}\circ\overline{\theta}_{0}^{(1)}\left([Y_1,Y_2](p)\right)&\mbox{ if }v_2\in \mathfrak{g}_{0}^{\mathrm{mod}}(\psi_0)
\end{cases}
\end{align} 
where $Y_1$ and $Y_2$ are vector fields defined on a neighborhood of $p$ in $P^1$ such that, supposing $v_2\in\mathfrak{g}_{i}(\psi_0)$ for $i\in\{0,-1,-2\}$, 
\begin{align}\label{structure function degree 1 b}
Y_1\in\Gamma(D_1^{-1}), \quad Y_2\in\Gamma(D_1^{i}),\quad \theta_{-1}^{(1)}(Y_1)=v_1,\quad \theta_{i}^{(1)}(Y_2)=\left(\left.\text{pr}^{\mathcal{I}_0}\right|_{\mathfrak{g}^{0,\mathrm{mod}}(\Pi_1(p))}\right)^{-1} (v_2),
\end{align}
\begin{align}
(Y_1)_{p}=\phi^{\mathcal{H}_{p}}(v_1),
\end{align}
and either
\begin{align}\label{structure function degree 1 c}
\left\{\parbox{9.8cm}{(a) $\quad i=0$ and $(Y_2)_{p}=\phi^{\mathcal{H}_{p}}\circ \left(\left.\text{pr}^{\mathcal{I}_0}\right|_{\mathfrak{g}^{0,\mathrm{mod}}(\Pi_1(p))}\right)^{-1} (v_2)$, \\
(b) $\quad i=-1$ and $(Y_2)_{p}\equiv\phi^{\mathcal{H}_{p}}(v_2)\pmod{D_1^1(\psi)}$, or\\
(c) $\quad i=-2$ and $(Y_2)_{p}\equiv\phi^{\mathcal{H}_{p}}(v_2)\pmod{D_1^0(\psi)}$.}\right.
\end{align}
Comparing this formula for $\mathcal{S}_{\mathcal{H}_p}$ to the structure functions defined (for geometric prolongations of arbitrary degree) in \cite[Formula (4.16)]{zelenko2009tanaka}, the only difference is that we include the projection $\mathrm{pr}^{\mathcal{I}_0}$ in multiple places, and this modification is necessary because the symbols $\mathfrak{g}_{0}^{\mathrm{mod}}(\psi)$ are non-constant. Notice that if the structure's modified CR symbols are constant on $\mathcal{O}^{0}$  then the formulas in \eqref{structure function degree 1 a} and \eqref{structure function degree 1 c} would be unaffected by the removal of  $\mathrm{pr}^{\mathcal{I}_0}$.

For a point $p\in P^1$, we introduce another anti-symmetrization operator 
\[
\partial^1_{p}:\mathrm{Hom}\big(\mathfrak{g}_{-2},\mathfrak{g}_{0}^{\mathrm{mod}}(\Pi_1(p))\big)\oplus \mathrm{Hom}\big(\mathfrak{g}_{-1},\mathfrak{g}_{1}^{\mathrm{mod}}(\Pi_1(p))\big)
\oplus\mathrm{Hom}\big(\mathfrak{g}_{0}^{\mathrm{mod}}(\Pi_1(p)),\mathfrak{g}_{1}^{\mathrm{mod}}(\Pi_1(p))\big)
\to \mathcal{S}_1
\]
defined by
\begin{align}\label{anti-symmetrization for P1}
\quad\quad
\partial^1_{p}f(v_1,v_2)=
\begin{cases}
\mathrm{pr}^{\mathcal{I}_0}\left(\left[f\circ \iota \left(v_1\right),v_2\right]+\left[v_1,f\circ \iota\left(v_2\right)\right]-f\circ \iota\left(\left[v_1,v_2\right]\right)\right) & \mbox{ if }v_2\in \mathfrak{g}_{-2},\mathfrak{g}_{-1} \\
\mathrm{pr}^{\mathcal{I}_0}\left(\left[v_1,f\circ \iota \left(v_2\right)\right]\right)& \mbox{ if }v_2\in \mathfrak{g}_{0}^{\mathrm{mod}}(\psi_0)
\end{cases}
\end{align}
using the identification $\iota=\left(\left.\text{pr}^{\mathcal{I}_0}\right|_{\mathfrak{g}^{0,\mathrm{mod}}(\Pi_1(p))}\right)^{-1}$  for brevity. Note that this definition of $\partial_p^1$ is similar to the generalized Spencer operator defined for the second geometric prolongation in \cite{zelenko2009tanaka}, and the key difference is that our definition of $\partial_p^1$ here includes intertwining with the projection $\mathrm{pr}^{\mathcal{I}_0}$.

Similar to the construction of the first geometric prolongation, by regularity of $\psi_0$ there exists an open neighborhood $\mathcal{O}^1\subset P^1$  with $\psi_0\in \Pi_1(\mathcal{O}^1)$ such that there exists a subspace $N_1\subset \mathcal{S}_1$ for which
\begin{align}\label{gOne normalization b}
\mathcal{S}_1=N_1\oplus \text{Im}\partial_{ \psi}^0 \quad\quad\forall\, \psi\in \mathcal{O}^0.
\end{align}
We can take it to be invariant with respect to natural involution induced on $\mathcal S_1$.
We call $ N_1$ the \emph{normalization condition of the structure function} for the first prolongation and the choice of $N_1$ defines a second geometric prolongation $P^2$ via the formula
\begin{align}\label{P2 definition via horizontals}
P^2:=
\left\{\mathcal{H}_{p}\,\left|\,\parbox{6cm}{$p\in\mathcal{O}^1$ and $\mathcal{H}_{p}$ is a pair of horizontal spaces in $T_{p}\mathcal{O}^1$ as described in \eqref{horizontal subspace pair a} and \eqref{horizontal subspace pair b} such that $S_{\mathcal{H}_{\psi}}\in N_1$}\right.\right\}.
\end{align}
Just as $P^1$ has the structure of an affine bundle modeled on the vector space $\mathfrak{g}_1^{\mathrm{mod}}(\psi_0)$, the bundle $P^2$ has the structure of an affine bundle over $\mathcal{O}^1$ modeled on $\mathfrak{g}_2^{\mathrm{mod}}(\psi_0)$.

Finally, by complete analogy with the first prolongation,  we can define the real part $\Re P^2$ of $P^2$ as an affine bundle over $\Re \mathcal O^1:=\mathcal{O}^1\cap \Re P^1$ modeled on the space $\Re \mathfrak{g}_2^{\mathrm{mod}}(\psi_0)$.

\subsection{Higher geometric prolongations}
To summarize how the above constructions of $P^1$ and $P^2$ differ from the geometric prolongations in \cite{zelenko2009tanaka}, each bundle $P^i$ is defined over a neighborhood in $P^{i-1}$ and the maps defined in \eqref{structure function degree 1 a} and \eqref{anti-symmetrization for P1} differ from their analogs in \cite{zelenko2009tanaka} in that they are intertwined with the projection $\mathrm{pr}^{\mathcal{I}_0}$. This exact pattern continues for the construction of each higher geometric prolongation $P^i$ with $i>2$. In particular, for example, letting $\mathfrak{u}_{1}$ denote the standard first Tanaka prolongation of the graded Lie algebra $\mathfrak{g}_{-}\oplus \mathfrak{csp}(\mathfrak{g}_{-1})$ (defined using the same formula given in \eqref{mgk 1} with $\mathfrak{g}_{0}^{\mathrm{mod}}(\psi)$ replaced by $\mathfrak{csp}(\mathfrak{g}_{-1})$) and again using the regularity of $\psi_0$, we can shrink the neighborhood $\mathcal{O}^1$ so that, in addition to \eqref{gOne normalization b}, there exists a subspace $\mathcal I_1$ in $\mathfrak{u}_{1}$ that is invariant under the induced involution and satisfies
\begin{align}\label{gOne normalization a}
\mathfrak{u}_{1}=\mathcal{I}_1\oplus\mathfrak{g}_{1}^{\mathrm{mod}}(\psi) \quad\quad\forall\, \psi\in \mathcal{O}^0.
\end{align}
Each $\mathfrak{g}^{0,\mathrm{mod}}(\psi)\oplus \mathfrak{g}_1^{\mathrm{mod}}(\psi)$ is identified with $\mathfrak{g}^{0,\mathrm{mod}}(\psi_0)\oplus \mathfrak{g}_1^{\mathrm{mod}}(\psi_0)$ via the projection $\mathrm{pr}^{\mathcal{I}_1}$ to the latter that is defined on $\mathfrak{g}^{0,\mathrm{mod}}(\psi)$ as $\mathrm{pr}^{\mathcal{I}_{0}}$ and is defined on $\mathfrak{u}_{1}$ as the projection onto $\mathfrak{g}_1^{\mathrm{mod}}(\psi_0)$ parallel to $\mathcal I_1$. Next, the third geometric prolongation $P^3$ can be constructed over a neighborhood $\mathcal{O}^2$ in $P^2$ using the construction in \cite{zelenko2009tanaka} with modification that the structure functions and generalized Spencer operators must be intertwined with the projection $\mathrm{pr}^{\mathcal{I}_1}$ just as the maps in \eqref{structure function degree 1 a} and \eqref{anti-symmetrization for P1} are intertwined with $\mathrm{pr}^{\mathcal{I}_0}$. Repeating the process with these modifications give a microlocal version of the Tanaka prolongation procedure, as it were.

The properties of the geometric prolongations defined in classical Tanaka theory that we are interested in remain unaffected by the modifications to the prolongation procedure made above. In particular,
\begin{enumerate}[label=(\alph*)]
    \item for each $i>0$, the space $P^i$ has the structure of an affine bundle over $\mathcal{O}^{i-1}$ modeled on the vector space $\mathfrak{g}_i^{\mathrm{mod}}(\psi_0)$,
    \item for each $i>0$, $P^i$ and $\mathcal{O}^{i-1}$ has a natural induced involution defined on it, and by restricting to the fixed point sets of these involutions one obtains the space $\Re P^i$ defined as a fiber bundle over $\Re \mathcal{O}^{i-1}$ modeled on the vector space $\Re  \mathfrak{g}_i^{\mathrm{mod}}(\psi_0)$,
    \item if $l+1$ is the smallest number for which $\mathfrak{g}_{l+1}^{\mathrm{mod}}(\psi_0)=0$ then the $l+2$ normalization conditions $N_{0},\ldots, N_{l+1}$ chosen in the first $l+2$ steps of the prolongation procedure determine a canonical absolute parallelism both on $\mathcal{O}^l$ and $\Re\mathcal{O}^l$, 
    \item the pseudogroup of local symmetries of the underlying CR manifold has a naturally induced partial action on each $\mathcal{O}^i$ and $\Re \mathcal{O}^i$ under which the parallelism mentioned in the last item is invariant.
\end{enumerate}

Item (c) completes the proof of item (1) in Theorem \ref{maintheorm}. Since the canonical frame on $\Re\mathcal{O}^l$ referred to in item (c) is invariant under the action of the pseudogroup of local symmetries, each symmetry is uniquely determined locally near a point by its value at that point, and therefore the dimension of this pseudogroup is not greater than the real dimension of the bundle $\Re \mathcal{O}^l$, which establishes item (2) in Theorem \ref{maintheorm}. By items (a) and (b)  above, this dimension is equal to $\dim_\C \mathfrak u(\mathfrak g^{0, \mathrm{mod}})$ as desired.

\section{Examples of the canonical parallelism construction}\label{Examples of the canonical parallelism construction}

Here we apply this paper's main results to solve the local equivalence problem for a family of hypersurfaces in $\mathbb{C}^6$, showing in particular that the family contains infinitely many non-equivalent structures sharing the same constant CR symbol (see Theorem \ref{eg8.4 equivalence solution 2} and Remark \ref{eg8.4 equivalence solution 2 remark}). This family of examples thus shows that there is indeed rich variety among the geometric structures in the local equivalence problems to which this paper's main results apply. Through these examples we demonstrate a full implementation of the canonical parallelism construction detailed in Section \ref{Absolute parallelisms}.

We will use standard real and complex coordinates $z=(z_0,\ldots, z_5)=(x_0+iy_0,\ldots, x_5+iy_5)$ of $\mathbb{C}^6$. For a real analytic function $f:U\to\R$ defined on some open subset $U\subset \mathbb{R}$, set
\begin{align}\label{C6 example defining function}
F_f(z):=-x_0+f(x_1)+2x_1x_4+2x_2x_3+(2x_1x_3+x_2^2)x_5+2x_1x_2x_5^2+x_1^2x_5^3,
\end{align}
and let $M_f$ denote the hypersurface in $\mathbb{C}^6$ defined by
\begin{align}\label{C6 example family}
M_f:=\{z\in \C^6\,|\, F_f(z)=0\}.
\end{align}
Such structures are special cases from a more general class of $2$-nondegenerate hypersurfaces studied in the forthcoming text \cite{kolar2022NewExamples}.

We will now apply this paper's main results to determine necessary and sufficient conditions for two hypersurfaces of the form in \eqref{C6 example family} to be locally equivalent near given points in the respective manifolds. For notational convenience, let us fix some arbitrary $f$ (with $f(0)=0$) and for the remainder of this example let $M$  and $F$ denote the corresponding objects $M_f$ and $F_f$.

The Levi form of $M$ at a point $z\in M$ is represented by the complex Hessian of $F$, (i.e., the matrix whose $(j,k)$ entry is $\tfrac{\partial^2F}{\partial\overline{ z_j} \partial z_k}$), which can be used to easily verify that $M$ has a rank $1$ Levi kernel. The vector fields 
\[
X_j:=\frac{\partial}{\partial z_j} +\frac{\partial F}{\partial z_j}\frac{\partial}{\partial z_0} \quad\quad\forall \,1\leq j\leq 5
\]
span the CR distribution $H$ on $M$. 

The structure on $M$ turns out to be $2$-nondegenerate, and to verify this while calculating its CR symbol it is more convenient to use a different basis of $H$, namely the basis $(Y_1,\ldots, Y_4,Y_9)$ given by
\[
Y_1:=X_1-\frac{3x_5}{2}X_2+\frac{3x_5^2}{8}X_3+\left(\frac{x_5^3}{16}-\frac{f^{\prime\prime}(x_1)}{4}\right)X_4, 
\quad
Y_2:=X_2-\frac{x_5}{2}X_3-\frac{x_5^2}{8}X_4,
\]
\[
Y_3:=X_3+\frac{x_5}{2}X_4,
\,\,
Y_4:=X_4,
\,\,\mbox{ and }\,\,
Y_9:=X_5-x_1X_2-(x_1x_5+x_2)X_3-(x_1x_5^2+x_2x_5+x_3)X_4.
\]
Note, motivation for the indexing choice in the label $Y_9$ will become clear soon.
It is straightforward to check that the Levi kernel $K$ is the rank 1 distribution given by 
\[
K:=\mathrm{span}_{\mathbb{C}}\{Y_9\},
\]
so to describe the CR symbols of $M$  in matrix representations with respect to $(Y_1,\ldots, Y_4,Y_9)$ we need to calculate the Lie brackets of vector fields in $\left(Y_1,\ldots, Y_4,\overline{Y_1},\ldots, \overline{Y_4},Y_9, \overline{Y_9}\right).$ For notational convenience, let us also label
\[
Y_0:=4i\frac{\partial}{\partial y_0},
\quad
Y_{10}:=\overline{Y_9},
\quad\mbox{ and }\quad Y_{j+4}:=\overline{Y_j}\quad\forall\, 1\leq j\leq 4.
\]
Considering the Lie brackets between vector fields among $Y_0,\ldots, Y_{10}$, all of the nonzero brackets are determined by
\begin{align}\label{eg8.4 base manifold brackets a}
[Y_1,Y_5]=\frac{f^{(3)}(x_1)}{4}Y_4-\frac{f^{(3)}(x_1)}{4}Y_8,
\end{align}
\begin{align}\label{eg8.4 base manifold brackets b}
[Y_j,Y_{9-j}]=Y_0\quad\forall\, 1\leq j\leq 4,
\end{align}
\begin{align}\label{eg8.4 base manifold brackets c}
[Y_9,Y_1]=-\frac{1}{2}Y_2,
\quad
[Y_9,Y_2]=\frac{1}{2}Y_3,
\quad
[Y_9,Y_3]=\frac{3}{2}Y_4,
\end{align}
\begin{align}\label{eg8.4 base manifold brackets d}
[Y_9,Y_5]=Y_2-\frac{3}{2}Y_6,
\quad
[Y_9,Y_6]=Y_3-\frac{1}{2}Y_7,
\quad\mbox{ and }\quad
[Y_9,Y_6]=Y_4+\frac{1}{2}Y_8,
\end{align}
together with the Lie bracket's antisymmetry and compatibility with complex conjugation.

It follows from \eqref{eg8.4 base manifold brackets d} that the structure is indeed $2$-nondegenerate. It follows furthermore from \eqref{eg8.4 base manifold brackets a}, \eqref{eg8.4 base manifold brackets b}, \eqref{eg8.4 base manifold brackets c} and \eqref{eg8.4 base manifold brackets d} that, for every $z\in M$,  the matrices $H_\ell$ and $C$ representing the CR symbol on $M$ at $z$ with respect to the basis $\big(\left(Y_0\right)_z,\ldots,\left(Y_{10}\right)_z\big)$ (in the notation of Section \ref{CR structures with constant modified symbols}) are
\begin{equation}\label{eg8.4 symbol}
H_\ell=
\left(
\begin{array}{cccc}
    0 & 0 & 0 & 1  \\
    0 & 0 & 1 & 0  \\
    0 & 1 & 0 & 0  \\
    1 & 0 & 0 & 0 
\end{array}
\right)
\quad\mbox{ and }\quad
C=
\left(
\begin{array}{cccc}
    0 & 0 & 0 & 0  \\
    1 & 0 & 0 & 0  \\
    0 & 1 & 0 & 0  \\
    0 & 0 & 1 & 0 
\end{array}
\right).
\end{equation}

Regarding the modified symbols, for each $z\in M$, consider the modified symbol $\mathfrak{g}^{0,\mathrm{mod}}(\varphi)$ at the point $\varphi:\mathfrak{g}_{-1}\to \mathfrak{g}_{-1}(z)$ in the fiber $P^0_z$ of $P^0$ above $z$ defined by $\varphi(a_j):=(Y_j)_z$ where $(a_1,\ldots, a_8)$ is the basis of $\mathfrak{g}_{-1}$ compatible with $H_\ell$ as in \eqref{canon basis}. The modified symbol is determined by a non-canonical splitting $\mathfrak{g}_0^{\mathrm{mod}}(\varphi)=\mathfrak{X}_{0,2}\oplus \mathfrak{g}_{0,0}(\varphi)\oplus \mathfrak{X}_{0,-2}$ of the form in Section \ref{CR structures with constant modified symbols} determined by the formula \eqref{Xzero2} together with 
\begin{equation}\label{eg8.4 Xzero2}
\left(
\begin{array}{cc}
 \Omega& C\\
 0& -H_\ell^{-1}\Omega^TH_\ell
\end{array}
\right)
=
\left(
\begin{array}{cccccccc}
    0 & 0 & 0 & 0 & 0 & 0 & 0 & 0 \\
    -\frac{1}{2} & 0 & 0 & 0 & 1 & 0 & 0 & 0 \\
    0 & \frac{1}{2} & 0 & 0 & 0 & 1 & 0 & 0 \\
    0 & 0 & \frac{3}{2} & 0 & 0 & 0 & 1 & 0 \\
    0 & 0 & 0 & 0 & 0 & 0 & 0 & 0 \\
    0 & 0 & 0 & 0 & -\frac{3}{2} & 0 & 0 & 0 \\
    0 & 0 & 0 & 0 & 0 & -\frac{1}{2} & 0 & 0 \\
    0 & 0 & 0 & 0 & 0 & 0 & \frac{1}{2} & 0
\end{array}
\right)\in \mathfrak{csp}\big(\mathfrak{g}_{-1}\big),
\end{equation}
a formula that is derived directly from \eqref{eg8.4 base manifold brackets c} and \eqref{eg8.4 base manifold brackets d}. Note that $\eqref{eg8.4 Xzero2}$ represents an endomorphism with respect to the basis $(a_1,\ldots, a_8)$. Together \eqref{eg8.4 symbol} and \eqref{eg8.4 Xzero2} determine the modified symbol of $M$ at each point $z\in M$ and $\varphi\in P^0_z$, namely $\mathfrak{g}_0^{\mathrm{mod}}(\varphi)$ is spanned by
\begin{equation}\label{eg8.4 red symbol}
\left\{\left(
\begin{array}{cccccccc}
    t_4-3t_5 & 0 & 0 & 0 & 0 & 0 & 0 & 0 \\
    -\frac{t_1+3t_2}{2} & t_4-t_5 & 0 & 0 & t_1 & 0 & 0 & 0 \\
    0 & \frac{t_1-t_2}{2} & t_4+t_5 & 0 & 0 & t_1 & 0 & 0 \\
    t_3 & 0 & \frac{3t_1+t_2}{2} & t_4+3t_5 & 0 & 0 & t_1 & 0 \\
    0 & 0 & 0 & 0 & t_4-3t_5 & 0 & 0 & 0 \\
    t_2 & 0 & 0 & 0 & -\frac{3t_1+t_2}{2} & t_4-t_5 & 0 & 0 \\
    0 & t_2 & 0 & 0 & 0 & -\frac{t_1-t_2}{2} & t_4+t_5 & 0 \\
    0 & 0 & t_2 & 0 & -t_3 & 0 & \frac{t_1+3t_2}{2} & t_4+3t_5
\end{array}
\right)
\right\},
\end{equation}
and
\begin{equation}\label{eg8.4 mod symbol}
\left\{\left(
\begin{array}{cccccccc}
    t_1 & 0 & 0 & 0 & 0 & 0 & 0 & 0 \\
    t_2  & t_1 & 0 & 0 & 0 & 0 & 0 & 0 \\
    0 & -t_2 & t_1 & 0 & 0 & 0 & 0 & 0 \\
    0 & 0 & t_2 & t_1 & 0 & 0 & 0 & 0 \\
    0 & 0 & 0 & 0 & -t_1 & 0 & 0 & 0 \\
    0 & 0 & 0 & 0 & -t_2 & -t_1  & 0 & 0 \\
    0 & 0 & 0 & 0 & 0 & t_2 & -t_1  & 0 \\
    0 & 0 & 0 & 0 & 0 & 0 & -t_2 & -t_1 
\end{array}
\right)
\right\}.
\end{equation}
The modified symbol at $\varphi\in P^0_z$ and the natural action of the $P^0$  structure group on the set of modified symbols determine all other modified symbols at points on the fiber $ P^0_z$, and hence we have shown that the modified symbols of $M$ have no dependence on $z$ or $f$! Since a structure's set of modified symbols is a basic local invariant, it is thus natural to study the local equivalence problem for the family of structures of the form in \eqref{C6 example family}.

The bundle $P^0$ admits a reduction $P^{0,\mathrm{red}}$ (as described in Section \ref{reduction_and_nongenericity_section}) with a constant reduced modified symbol $\mathfrak{g}^{0,\mathrm{red}}$ whose degree zero component $\mathfrak{g}_0^{\mathrm{red}}$ is represented by the space of matrices in \eqref{eg8.4 red symbol}, so we can apply Theorem \ref{maintheorconst}. To apply Theorem \ref{maintheorconst}, we need to calculate the universal algebraic Tanaka prolongation $\mathfrak{u}\left(\mathfrak{g}^{0,\mathrm{red}}\right)$ of $\mathfrak{g}^{0,\mathrm{red}}$, and indeed this is straightforward to calculate using \eqref{eg8.4 red symbol} to obtain
\begin{align}\label{eg68.4 prolongation}
\mathfrak{u}\left(\mathfrak{g}^{0,\mathrm{red}}\right)=\mathfrak{g}^{0,\mathrm{red}}.
\end{align}
Note, while it is feasible to derive \eqref{eg68.4 prolongation} by hand, it can be reproduced efficiently using the Maple mathematical software's Differential Geometry package, and it even follows immediately with no calculation from \cite[Theorem 3.8]{Sykes2022}. Let us label the basis $(a_0,\ldots,a_{13})$ of $\mathfrak{g}^{0,\mathrm{red}}$ that extends the previously introduced basis $(a_1,\ldots,a_8)$ satisfying $a_0=[a_1,a_8]$ and with $a_{8+j}$ represented by the matrix in \eqref{eg8.4 red symbol} with $t_k=\delta_{j,k}$.

By  \eqref{eg68.4 prolongation} and Theorem \ref{maintheorconst}, the geometric prolongation procedure (detailed in Section \ref{Absolute parallelisms}) will yield a canonical absolute parallelism on a bundle diffeomorphic to $P^{0,\mathrm{red}}$. We begin this construction by defining a pre-normalized frame $\widetilde{E}_0,\ldots,\widetilde{E}_{13}$ on $P^{0,\mathrm{red}}$ as follows. 

For each point $z\in M$, let $\varphi_z\in P^{0\mathrm{red}}$ be the isomorphism $\varphi_z(a_j):=(Y_j)_z$, that is, $z\mapsto \varphi_z$ defines a section of $\mathrm{pr}:P^{0,\mathrm{red}}\to M$. Notice that $\pi:M\cap \{(z_0,\ldots, z_5)\,|\,z_5=0\}\to \pi(M)\cap\mathcal{N}$ is a diffeomorphism, so $\sigma:\mathcal{N}\cap\pi(M)\to P^{0,\mathrm{red}}$ given by
\[
\sigma(q):=\varphi_{\widetilde{\pi}^{-1}(q)}
\quad\mbox{ where }\quad
\widetilde{\pi}:=\left.\pi\right|_{M\cap \{(z_0,\ldots, z_5)\,|\,z_5=0\}}
\]
defines a section of $\pi\circ\mathrm{pr}:P^{0,\mathrm{red}}\to\mathcal{N}\cap \pi(M)$. It will be convenient to regard $\mathcal{N}\cap \pi(M)$ as a submanifold of $M$ embedded via the section $\widetilde{\pi}^{-1}$ of $\pi$, so in what follows we make this identification and, in particular, let the map $\Pi:P^{0,\mathrm{red}}\to\mathcal{N}\cap \pi(M)\subset M$ denote the projection
\[
\Pi:=\left(\left.\pi\right|_{M\cap \{(z_0,\ldots, z_5)\,|\,z_5=0\}}\right)^{-1}\circ\pi\circ\mathrm{pr}.
\]
Regarding the structure group $G_{0}^{\mathrm{red}}$ of $\Pi:P^{0,\mathrm{red}}\to\mathcal{N}\cap \pi(M)$ as a subgroup of $GL(\mathfrak{g}_{-1})$, it has the natural right action on $P^{0,\mathrm{red}}$ given by
\[
R_g(\varphi):=\varphi\circ g
\quad\quad\forall\,g\in G_{0}^{\mathrm{red}},\,\varphi\in P^{0,\mathrm{red}},
\]
and for each $g\in G_{0}^{\mathrm{red}}$ we define the submanifold $S_g$ of $P^{0,\mathrm{red}}$ using $\sigma$ by
\[
S_g:=\{R_g(\varphi)\,|\, \varphi\in \mathrm{Im}(\sigma)\}.
\]
Notice that the submanifolds $S_g$ foliate $P^{0,\mathrm{red}}$.

Now define the pre-normalized frame $\widetilde{E}_0,\ldots,\widetilde{E}_{13}$ on $P^{0,\mathrm{red}}$ by requiring
\begin{align}\label{cooridinate-free pre-normalized frame a}
\Pi_*\left(\widetilde{E}_{j}\right)_{R_g(\varphi)}:=\varphi\circ g (a_j)
\quad\quad\forall\, g\in G^{0,\mathrm{red}},\, \varphi\in \mathrm{Im}(\sigma),\,j\in\{0,\ldots, 8\},
\end{align}
where $\varphi\circ g (a_0)$ is defined via the natural extension of $\varphi\circ g$ to $\mathfrak{g}_{-}$ with $\varphi(a_0):=(Y_0)_{\mathrm{pr}(\varphi)}$ (i.e., defined such that the extension is a Lie algebra isomorphism),
\begin{align}\label{cooridinate-free pre-normalized frame b}
\left(\widetilde{E}_{j}\right)_{R_g(\varphi)}\in T_{R_g(\varphi)}S_g
\quad\quad\forall\, g\in G^{0,\mathrm{red}},\, \varphi\in \mathrm{Im}(\sigma),\,j\in\{0,\ldots, 8\},
\end{align}
and
\begin{align}\label{cooridinate-free pre-normalized frame c}
\left(\widetilde{E}_{j}\right)_{R_g(\varphi)}:=\left.\frac{d}{dt}\right|_{t=0} R_{e^{ta_j}}\circ R_g(\varphi)
\quad\quad\forall \varphi\in P^{0,\mathrm{red}},\, j\in\{9,\ldots, 13\}.
\end{align}
Indeed \eqref{cooridinate-free pre-normalized frame a}, \eqref{cooridinate-free pre-normalized frame b}, and \eqref{cooridinate-free pre-normalized frame c} determine $\widetilde{E}_0,\ldots,\widetilde{E}_{13}$ uniquely.
\begin{remark}
The frame $\widetilde{E}_0,\ldots,\widetilde{E}_{13}$ determines a Cartan connection $\widetilde{\omega}:TP^{0,\mathrm{red}}\to\mathfrak{g}^{0,\mathrm{red}}$ on the principal bundle $\pi\circ\mathrm{pr}:TP^{0,\mathrm{red}}\to \mathcal{N}\cap \pi(M)$ by the rule $\widetilde{\omega}(E_j)_{\varphi}=a_j$ for all $\varphi\in P^{0,\mathrm{red}}$. The connection is not canonical, however, because of its dependence on both the ad hoc choice of $\sigma$ and the $Y_j$ fields.
\end{remark}

Following the first geometric prolongation construction in Section \ref{Absolute parallelisms}, we need to calculate now the structure function $S_{\widetilde{\mathcal{H}}_{(z,t)}}$ associated with this frame $(\widetilde{E}_1,\ldots,\widetilde{E}_{13})$ via the identifications
\begin{align}\label{frame h-space correspondance a}
\widetilde{\mathcal{H}}_\psi=(\widetilde{{H}}_{(z,t)}^{-2},\widetilde{{H}}_{(z,t)}^{-1}),
\end{align}
\begin{align}\label{frame h-space correspondance aa}
\widetilde{{H}}_{(z,t)}^{-2}:=\mathrm{span}\left\{\left(\widetilde{E_0}\right)_{(z,t)}\right\}\pmod{\mathrm{span}\left\{\left(\widetilde{E_{9}}\right)_{(z,t)},\ldots,\left( \widetilde{E}_{13}\right)_{(z,t)}\right\}},
\end{align}
and
\begin{align}\label{frame h-space correspondance b}
\widetilde{{H}}_{(z,t)}^{-1}:\mathrm{span}\left\{\left(\widetilde{E}_{1}\right)_{(z,t)},\ldots,\left( \widetilde{E}_{8}\right)_{(z,t)}\right\},
\end{align}
and potentially then modify the frame so that the modified frame's structure function is suitably normalized. Notice that the vector fields $\widetilde{E}_0,\ldots,\widetilde{E}_{13}$ satisfy the conditions in \eqref{structure function b} and \eqref{structure function c} of the vector fields used in the definition \eqref{structure function a} of $S_{\widetilde{\mathcal{H}}_{(z,t)}}$. Accordingly, we can obtain $S_{\widetilde{\mathcal{H}}_{(z,t)}}$ by calculating Lie brackets of $\widetilde{E}_0,\ldots,\widetilde{E}_{13}$, that is we need to calculate the structure functions $\widetilde{T}_{j,k}^l$ of the parallelism $(\widetilde{E}_0,\ldots,\widetilde{E}_{13})$, which are the functions satisfying
\[
\left[\widetilde{E}_j,\widetilde{E}_k\right]:=\sum_{l=0}^{13}\widetilde{T}_{j,k}^l\widetilde{E}_l.
\]

We calculate these using \eqref{eg8.4 base manifold brackets a}-\eqref{eg8.4 base manifold brackets d}, \eqref{cooridinate-free pre-normalized frame a}-\eqref{cooridinate-free pre-normalized frame c}, and the matrix representation of $G_0^{\mathrm{red}}$ with respect to the basis $(a_1,\ldots, a_8)$ having general elements of the form
\begin{align}\label{eg8.4 structureGroup}
g=\mathrm{Exp}\big([\mbox{the matrix in \eqref{eg8.4 red symbol}}]\big).
\end{align}
Using \eqref{eg8.4 base manifold brackets a}-\eqref{eg8.4 base manifold brackets b}, \eqref{cooridinate-free pre-normalized frame a}-\eqref{cooridinate-free pre-normalized frame b} and \eqref{eg8.4 structureGroup}, we get
\begin{align}\label{pre-normalized structure coefficients a}
\left[\widetilde{E}_{0},\widetilde{E}_{j}\right]=0
\quad\quad\forall\,j\in\{1,\ldots, 8\},
\end{align}
and
\begin{align}\label{pre-normalized structure coefficients b}
\Pi_{*}\left(\left[\widetilde{E}_{j},\widetilde{E}_{k}\right]_{R_g(\varphi)}\right)&=\left[\sum_{r=1}^8 g_{r,j}Y_{r},\sum_{s=1}^8g_{s,k}Y_{s}\right]_{\mathrm{pr}(\varphi)}\nonumber\\
&=\sum_{r,s=1}^8g_{r,j}g_{s,k}\left[ Y_{r},Y_{s}\right]_{\mathrm{pr}(\varphi)}
\quad\forall\,\varphi\in\mathrm{Im}(\sigma),\,j,k\in\{1,\ldots, 8\},
\end{align}
which uniquely determines $\left[\widetilde{E}_{j},\widetilde{E}_{k}\right]$ because it is also horizontal (i.e., tangent to the $\{S_g\,|\,g\in G_0^{\mathrm{red}}\}$ foliation).
The remaining structure coefficients (i.e., $T_{j,k}^l$ with $9\leq j$) are, by construction, the same as the Lie algebra structure coefficients $a_{j,k}^l$ defined by  
\[
[a_j,a_k]:=\sum_{l=0}^{13}a_{j,k}^la_l,
\]
that is,
\begin{align}\label{pre-normalized structure coefficients c}
\widetilde{T}_{j,k}^l=a_{j,k}^l
\quad\quad\forall\, j\in\{9,\ldots, 13\}.
\end{align}

\begin{lemma}
At a point $\varphi\circ g\in P^{0,\mathrm{red}}$ with $\varphi\in \mathrm{Im}(\sigma)$ and $g$ as in \eqref{eg8.4 structureGroup}, the structure coefficients $\widetilde{T}_{j,k}^l(\varphi\circ g)$ of $(\widetilde{E}_{0},\ldots, \widetilde{E}_{13})$ are
\begin{align}\label{pre-normalized torsion a}
\widetilde{T}_{1,5}^4(\varphi\circ g)=\frac{f^{(3)}(x_1)g_{1,1}g_{5,5}}{4g_{4,4}}=\frac{f^{(3)}(x_1)e^{t_4-9t_5}}{4},
\quad
\widetilde{T}_{1,5}^8(\varphi\circ g)=-\widetilde{T}_{1,5}^4(\varphi\circ g),
\end{align} 
and
\begin{align}\label{pre-normalized torsion b}
\widetilde{T}_{j,k}^l(p)=a_{j,k}^l
\quad\quad\forall\,\widetilde{T}_{j,k}^l\mbox{ not determined by \eqref{pre-normalized torsion a}},
\end{align}
where $x_1$ in \eqref{pre-normalized torsion a} is the $x_1$ coordinate of $\mathrm{pr}(\varphi\circ g)\in M\subset \mathbb{C}^6$.
\end{lemma}
\begin{proof}

We have
\begin{align}\label{ad g hom}
\varphi\left([a_j,a_k]\right)&=\varphi\left([a_j,a_k]\right)\nonumber\\
&\equiv[Y_j,Y_k]_{\mathrm{pr}(\varphi)}\pmod{H\oplus \overline{H}=\langle Y_1,\ldots, Y_{10}\rangle}
\,\,\forall\, j,k\in\{1,\ldots, 8\}
\end{align}
because $\varphi(a_j)=(Y_j)_{\mathrm{pr}(\varphi)}$ and $\varphi:\mathfrak{g}_{-1}\to\mathfrak{g}_{-1}\big({\mathrm{pr}(\varphi)}\big)$ is the restriction of a Lie algebra isomorphism between $\mathfrak{g}_{-}$ and $\mathfrak{g}_{-}\big({\mathrm{pr}(\varphi)}\big)$.

By \eqref{pre-normalized structure coefficients b} and noting that all nonzero brackets between $Y_1,\ldots, Y_8$ appear in \eqref{eg8.4 base manifold brackets a}-\eqref{eg8.4 base manifold brackets b}, for each $\varphi\in \mathrm{Im}(\sigma)$ and $g\in G_0^{\mathrm{red}}$,
\begin{align}\label{1 5 bracket calculation a}
\Pi_{*}\left(\left[\widetilde{E}_{1},\widetilde{E}_{5}\right]_{R_g(\varphi)}\right)&=\sum_{r,s=1}^8g_{r,1}g_{s,5}\left[ Y_{r},Y_{s}\right]_{\pi(\varphi)}\\
&=\left(g_{1,1}g_{5,5}-g_{5,1}g_{1,5}\right)\left[ Y_{1},Y_{5}\right]_{\pi(\varphi)}+ c \left(Y_0\right)_{\pi(\varphi)}\\
&=\left(g_{1,1}g_{5,5}-g_{5,1}g_{1,5}\right)\left[ Y_{1},Y_{5}\right]_{\pi(\varphi)},
\end{align}
where $c$ represents some coefficient, and  the last equality asserting that in fact this $c$ is zero indeed holds by \eqref{ad g hom} because, since $[a_1,a_5]=0$,
\begin{align}
0=\varphi\circ g\left([a_1,a_5]\right)&=\varphi\left([g(a_1),_g(a_5)]\right)\\
&\equiv[\varphi\circ g(a_1),\varphi\circ g(a_5)]\pmod{H\oplus \overline{H}=\langle Y_1,\ldots, Y_{10}\rangle},
\end{align}
which together with \eqref{cooridinate-free pre-normalized frame a} shows that $\mathrm{pr}_{*}\left(\left[\widetilde{E}_{1},\widetilde{E}_{5}\right]_{R_g(\varphi)}\right)\in\mathrm{span}_{\mathbb{C}}\{Y_1,\ldots, Y_{10}\}$.

Exponentiating \eqref{eg8.4 red symbol} to calculate $g$ as in \eqref{eg8.4 structureGroup}, we get
\begin{align}\label{g i j values}
\quad\quad g_{1,j}=\delta_{1,j}e^{t_4-3t_5},
\,
g_{5,j}=\delta_{5,k}e^{t_4-3t_5},
\mbox{ and }
g_{j,k}=\delta_{j,k}e^{t_4+3t_5}
\,\,\forall\, j\in \{1,\ldots, 8\},\,k\in\{4,8\}.
\end{align}
Accordingly, by \eqref{cooridinate-free pre-normalized frame a}, for $j\in\{4,8\}$ and $\varphi\in \mathrm{Im}(\sigma)$, $\Pi_*\left(\widetilde{E}_j\right)_{R_g(\varphi)}=g_{4,4}\left(Y_j\right)_{\mathrm{pr}(\varphi)}$, and hence  applying \eqref{eg8.4 base manifold brackets a}  to \eqref{1 5 bracket calculation a} yields
\begin{align}\label{1 5 bracket calculation b}
\Pi_{*}\left(\left[\widetilde{E}_{1},\widetilde{E}_{5}\right]_{R_g(\varphi)}\right)&=\left(g_{1,1}g_{5,5}-g_{5,1}g_{1,5}\right)\left[ Y_{1},Y_{5}\right]_{\pi(\varphi)}\\
&=\frac{f^{(3)}(x_1)g_{1,1}g_{5,5}}{4g_{4,4}}\left(g_{4,4}\left(Y_4\right)_{\pi(\varphi)}-g_{4,4}\left(Y_8\right)_{\pi(\varphi)}\right) \\
&=\frac{f^{(3)}(x_1)g_{1,1}g_{5,5}}{4g_{4,4}}\Pi_*\left(\left(\widetilde{E}_{4}\right)_{R_g(\varphi)}-\left(\widetilde{E}_{8}\right)_{R_g(\varphi)}\right)\\
&=\frac{f^{(3)}(x_1)e^{t_4-9t_5}}{4}\Pi_*\left(\left(\widetilde{E}_{4}\right)_{R_g(\varphi)}-\left(\widetilde{E}_{8}\right)_{R_g(\varphi)}\right),
\end{align}
which, in particular, implies \eqref{pre-normalized torsion a}.

For \eqref{pre-normalized torsion b}, notice from \eqref{eg8.4 base manifold brackets a} and \eqref{eg8.4 base manifold brackets b} that
\begin{align}\label{j k bracket calculation a}
[Y_j,Y_k]=\sum_{l=0}^{13}a_{j,k}^lY_l=a_{j,k}^0Y_0,
\quad\quad\forall j\leq k \in\{1,\ldots,8\},\,(j,k)\neq(1,5)
\end{align}
so we actually have direct equality in \eqref{ad g hom} whenever $(j,k)\neq(1,5),(5,1)$ rather than just equivalence modulo $H\oplus \overline{H}$. By \eqref{g i j values}, if $(j,k)\neq(1,5),(5,1)$ then the right side of \eqref{pre-normalized structure coefficients b} does not have a nonzero multiple of $[Y_1,Y_5]$, and hence applying \eqref{cooridinate-free pre-normalized frame a}, \eqref{ad g hom}, and  \eqref{j k bracket calculation a} yields
\begin{align}\label{j k bracket calculation b}
\Pi_{*}\left(\left[\widetilde{E}_{j},\widetilde{E}_{k}\right]_{R_g(\varphi)}\right) &=\varphi\circ g\left(\left[a_j, a_k\right]\right)\\
&=a_{j,k}^0 \big(\varphi\circ g\left(a_0\right)\big)\\
&=\sum_{l=0}^{13}a_{j,k}^l\Pi_{*}\left(\widetilde{E}_{l}\right)_{R_g(\varphi)}
\quad\quad\forall j\leq k \in\{1,\ldots,8\},\,(j,k)\neq(1,5).
\end{align}
Together \eqref{pre-normalized structure coefficients a}, \eqref{pre-normalized structure coefficients c}, \eqref{1 5 bracket calculation b}, and \eqref{j k bracket calculation b} establish \eqref{pre-normalized torsion b}.
\end{proof}

Noticing that $\widetilde{T}_{1,11}^4=-1$ and $\widetilde{T}_{5,11}^8=1$, there is an apparent way to modify the frame $\widetilde{E}_0,\ldots,\widetilde{E}_{13}$ such that the structure coefficients in \eqref{pre-normalized torsion a} become zero. Specifically, consider the new frame $({E}_0,\ldots,{E}_{13})$ given by
\[
{E}_j:=
\begin{cases}
\widetilde{E}_{1}-\frac{f^{(3)}(x_1)e^{t_4-9t_5}}{4}\widetilde{E}_{11} &\mbox{ if }j=1\\
\widetilde{E}_{5}+\frac{f^{(3)}(x_1)e^{t_4-9t_5}}{4}\widetilde{E}_{11} &\mbox{ if }j=5\\
\widetilde{E}_{j} &\mbox{ otherwise},
\end{cases}
\]
with corresponding structure functions $T_{j,k}^l$ satisfying
\[
\left[{E}_j,{E}_k\right]:=\sum_{l=0}^{13}{T}_{j,k}^l{E}_l.
\]
Through direct calculation, one finds
\begin{align}\label{normalized torsion}
{T}_{j,k}^l=
\begin{cases}
\frac{f^{(4)}(x_1)e^{2t_4-12t_5}}{2}& \mbox{ if }(j,k,l)=(1,5,11)\\
a_{j,k}^l&\mbox{ otherwise, assuming }j\leq k.
\end{cases}
\end{align}

Letting $\mathcal{H}_{(z,t)}$ be defined as in \eqref{frame h-space correspondance a}, \eqref{frame h-space correspondance aa}, and \eqref{frame h-space correspondance b} (just without tildes decorating every object), we can calculate $S_{\mathcal{H}_{(z,t)}}$ using \eqref{normalized torsion}  and \eqref{structure function a} because ${E}_0,\ldots,{E}_{13}$ satisfy the conditions in \eqref{structure function b} and \eqref{structure function c}. In particular, it follows from \eqref{normalized torsion} that
\[
S_{\mathcal{H}_{(z,t)}}=0
\]
and hence ${H}_{(z,t)}$ is normalized (in the sense of Section \ref{Absolute parallelisms}) relative to any choice of normalization condition chosen to construct the first geometric prolongation. 

Since, by \eqref{eg68.4 prolongation}, $\mathfrak{g}_{1}=0$, the first geometric prolongation is the bundle $P^1\to P^{0,\mathrm{red}}$ whose fiber over a point $(z,t)\in P^{0,\mathrm{red}}$ consists of the single element ${H}_{(z,t)}$. Since  $P^1$ and $P^{0,\mathrm{red}}$ are diffeomorphic, we identify them as we build the second geometric prolongation, and, in particular, regard $({E}_0,\ldots,{E}_{13})$ as a frame on $P^1$.

For the second prolongation, we define the graded horizontal spaces ${\mathcal{H}}_{(z,t)}=(\mathcal{H}_{(z,t)}^{-2},\mathcal{H}_{(z,t)}^{-1},\mathcal{H}_{(z,t)}^{0})$ on $P^1$ by
\[
\mathcal{H}_{(z,t)}^{-2}=\mathrm{span}\left\{\left(E_0\right)_{(z,t)}\right\},
\quad
\mathcal{H}_{(z,t)}^{-1}=\mathrm{span}\left\{\left(E_1\right)_{(z,t)},\ldots, \left(E_8\right)_{(z,t)}\right\},
\]
and
\[
\mathcal{H}_{(z,t)}^{0}=\mathrm{span}\left\{\left(E_9\right)_{(z,t)},\ldots, \left(E_{13}\right)_{(z,t)}\right\}.
\]
Similar to our calculations for the first prolongation, the structure functions $S_{\mathcal{H}_{(z,t)}}\in\mathcal{S}_1$  (defined in \eqref{structure function degree 1 a}) for the second prolongation can easily calculated using \eqref{normalized torsion}, namely
\begin{align}\label{second prol str fun}
S_{\mathcal{H}_{(z,t)}}(a_j,a_k)=
\begin{cases}
\frac{f^{(4)}(x_1)e^{2t_4-12t_5}}{2}a_{11} &\mbox{ if }(j,k=0)\\
0&\mbox{ otherwise, assuming }j\leq k.
\end{cases}
\end{align}
Since $\mathfrak{g}_{1}=0$, it is easily seen that \eqref{second prol str fun} does not lie in the image of the antisymetrization operator $\partial^1_{(z,t)}$ (defined in \eqref{anti-symmetrization for P1}). Hence, for the second prolongation construction, we should choose the normalization condition $N_1$ to be any subspace in $\mathcal{S}_1$ satisfying  \eqref{gOne normalization b} and containing the structure function in \eqref{second prol str fun}. Relative to this normalization condition, $\mathcal{H}$ is, of course normalized. Notice also, that if any such normalization condition were fixed then  the analogously constructed $\mathcal{H}$ built for a different $f$ would also be normalized, that is, our normalization condition does not depend on $f$. 

Since $\mathfrak{g}_{1}$ vanishes, the prolongation procedure completes on the second step, and the frame $(E_0,\ldots,E_{13})$ that we constructed is therefore parallel to a canonical absolute parallelism on $P^2\cong P^{0,\mathrm{red}}$, and the frame $(E_0,\ldots,E_{13})$ itself is determined canonically by the parallelism and the choice of basis $(a_0,\ldots, a_{13})$ of $\mathfrak{g}^{0,\mathrm{red}}$. From the classical theory of absolute parallelisms (e.g., as in \cite{AS04,olver1995equivalence,stern}),
fundamental invariants of the resulting parallelism consist of its structure functions  given in \eqref{normalized torsion} and their directional derivatives in the direction of the vector fields of the frame  also called covariant or 
coframe derivatives in the terminology of \cite [chapter 7, \S 4]{stern} and
\cite[chapter 8]{olver1995equivalence}.

The only nonconstant set of functions in this set of invariants are
\[
I_1:=T_{1,5}^{11}=\frac{f^{(4)}(x_1)e^{2t_4-12t_5}}{2},
\]
and
\begin{align}\label{second invariant}
I_{j}:=\overbrace{E_1\circ\cdots\circ  E_1}^{\mbox{$j-1$ copies}}(I_1)=\frac{f^{(j+3)}(x_1)e^{(j+1)t_4-(3j+9)t_5}}{2}
\quad\quad\forall\, j>1,
\end{align}
as
$
E_1\equiv e^{t_4-3t_5}\frac{\partial}{\partial x_1}\pmod{\mathrm{span}\left\{\frac{\partial}{\partial x_0},\frac{\partial}{\partial x_2},\ldots,\frac{\partial}{\partial x_{5}},\frac{\partial}{\partial y_0},\ldots,\frac{\partial}{\partial y_5}\right\}}$, $E_{12}=\frac{\partial}{\partial t_4}$, 
\[
E_{13}\equiv\frac{\partial}{\partial t_5} \pmod{\mathrm{span}\left\{\frac{\partial}{\partial x_0},\frac{\partial}{\partial x_2},\ldots,\frac{\partial}{\partial x_{5}},\frac{\partial}{\partial y_0},\ldots,\frac{\partial}{\partial y_5},\frac{\partial}{\partial t_{1}},\frac{\partial}{\partial t_{2}}, \frac{\partial}{\partial t_{3}}\right\}}
\]
and all of the other $E_j$ have no $\frac{\partial}{\partial x_{1}},\frac{\partial}{\partial t_{4}},\frac{\partial}{\partial t_{5}}$ components.

\begin{remark}
\label{invariants_on _M}
First, note that 
each $I_j$ is a relative invariant, i.e., its vanishing/non-vanishing is an invariant property. Second, 
taking appropriate ratios of invariants $I_j$, one can easily  obtain invariants on the original hypersurface. In more detail given any two nonincreasing tuples (multi-indices)  of positive integers 
$K=(k_1,\ldots, k_s)$ and $L=(l_1,\ldots, l_s)$ with $|K|=|L|$, where as usual $|K|$ denotes the sum of the integers in the tuple $K$, and assuming that $I_{l_t}(x_1)\neq 0$ for all $1\leq t\leq s$, the following function on the original hypersurface is an invariant:
\begin{equation}
 J_{K, L}:=\cfrac{
 \prod_{t=1}^s I_{k_t}} {\prod_{t=1}^s I_{l_t}}=\cfrac{
 \prod_{t=1}^sf^{(k_t+3)}(x_1)} {\prod_{t=1}^s f^{(l_t+3)}(x_1)}
 \end{equation}
 In particular, if  $f^{(5)}(x_1)\neq 0$, 
 then 
\begin{equation}
\label{456}
J_{(1,3), (2,2)}=\cfrac{f^{(4)}(x_1) f^{(6)}(x_1)}{\bigl(f^{(5)}(x_1)\bigr)^2}
\end{equation}
is a well-defined invariant. 
\end{remark}

We can derive from the set $\{I_j\}$ a criterion for local equivalence between structures $M_f$ and $M_{\widehat{f}}$, and an efficient way to complete this is through an application of \cite[Theorem 5.5]{AS04} summarized in the proof of the following lemma.

\begin{theorem}\label{eg8.4 equivalence solution 2}
For two real-analytic functions $f,\widehat{f}:\R\to\R$, the germs of the hypersurfaces $M_{f}$ and $M_{\widehat{f}}$ (defined by \eqref{C6 example family}) at points
\[
p^\prime=({x}^{\prime}_0,\ldots,{x}^{\prime}_5,{y}^{\prime}_0,\ldots,{y}^{\prime}_5)\in M_f
\]
and 
\[
p^{\prime\prime}=({x}^{\prime\prime}_0,\ldots,{x}^{\prime\prime}_5,{y}^{\prime\prime}_0,\ldots,{y}^{\prime\prime}_5)\in M_{\widehat{f}}
\]
respectively are equivalent if and only if $\widehat{f}^{(4)}(x)=af^{(4)}(bx+c)$ for all $x$ near $x_1^\prime$ and some nonzero numbers $a$, $b$ and $c$ such that $bx_1^\prime+c=x_1^{\prime\prime}$.
\end{theorem}

\begin{proof}
For $f$ and $\widehat{f}$, let 
\[
\{P^2,E_0,\ldots, E_{13},I_1,I_2,\ldots\}
\quad\mbox{ and }\quad
\{\widehat{P}^2,\widehat{E}_0,\ldots, \widehat{E}_{13},\widehat{I}_1,\widehat{I}_2,\ldots\}
\]
be the objects constructed above for the respective hypersurfaces $M_{f}$ and $M_{\widehat{f}}$.

By  \cite[Theorem 5.5]{AS04} (or, its alternative formulation as the Nagano principle in \cite[Chapter 5.7]{AS04}), there exists a diffeomorphism $\Phi:P^{2}\to \widehat{P}^2$ for which $\Phi_* E_j=\widehat E_j$ over a neighborhood of some point $p\in P^{2}$ for all $j$ if and only if
\begin{align}\label{eg8.4 eq condition 1}
f^{(k+3)}(x_1)e^{(k+1)t_4-(3k+9)t_5}=2I_k(\psi)=2\widehat{I}_k\big(\widehat{\psi}\big)=\widehat{f}^{(k+3)}(\widehat{x}_1)e^{(k+1)\widehat{t}_4-(3k+9)\widehat{t}_5}
\end{align}
for all $k\in\mathbb{N}$, where each where $\widehat{\psi}$  in \eqref{eg8.4 eq condition 1} denotes $\Phi(\psi)$ and \eqref{eg8.4 eq condition 1} refers to components of $\psi$ and $\widehat{\psi}$ in the local coordinates
\[
\psi=(x_0,\ldots,x_5,y_0,\ldots,y_5,t_1,\ldots,t_5)\in P^2
\]
and 
\[
\Phi(\psi)=(\Phi_1,\ldots,\Phi_{17})=(\widehat{x}_0,\ldots,\widehat{x}_5,\widehat{y}_0,\ldots,\widehat{y}_5,\widehat{t}_1,\ldots,\widehat{t}_5)\in \widehat{P}^2
\]
of the ambient spaces in which we constructed $P^2$ and $\widehat{P}^2$ previously in this section.
Therefore, $M_f$ and $M_{\widehat{f}}$ are equivalent near $\psi$ and $\widehat{\psi}$ if and only if \eqref{eg8.4 eq condition 1} holds for all $k$.

Accordingly, condition \eqref{eg8.4 eq condition 1} simplifies to
\[
f^{(k+4)}(x_1)=\widehat{f}^{(k+4)}\big(\widehat{x}_1\big)e^{(k+2)(\widehat{t}_4-t_4)-(3k+12)(\widehat{t}_5-t_5)}=ab^{k}\widehat{f}^{(k+4)}\big(\widehat{x}_1\big)
\quad\quad\forall \,k\in\mathbb{N}\cup\{0\}
\]
with setting $a=e^{2(\widehat{t}_4-t_4)-12(\widehat{t}_5-t_5)}$ and $b=e^{(\widehat{t}_4-t_4)-3(\widehat{t}_5-t_5)}$, and hence
\begin{align}\label{equivalence solution relations}
\left.\frac{\partial^k}{\partial x^k}\right|_{x=x_1}\left( f^{(4)}(x)\right)=\left.\frac{\partial^k}{\partial x^k}\right|_{x=x_1}\left( a\widehat{f}^{(4)}(bx+c)\right),
\quad\quad\forall \,k\in\mathbb{N}\cup\{0\}
\end{align}
with $c=\widehat{x}_1-bp_2$. Since the prolongation procedure translates local equivalence on the base space to local equivalence in the bundle on which the parallelism is constructed, germs of $M_{f}$ and $M_{\widehat{f}}$ at $p^\prime$ and $p^{\prime\prime}$ are equivalent if and only if  germs of $(E_1,\ldots, E_n)$ and $(\widetilde{E}_1,\ldots, \widetilde{E}_n)$  are equivalent at some points in the respective fibers over $p^\prime$ and $p^{\prime\prime}$. Letting $\psi$ and $\widehat{\psi}$ be points over $p^\prime$ and $p^{\prime\prime}$, we get that germs of $M_{f}$ and $M_{\widehat{f}}$ at $p^\prime$ and $p^{\prime\prime}$ are equivalent if and only if \eqref{equivalence solution relations} holds with $x_1^\prime$ and $x_1^{\prime\prime}$ replacing $x_1$ and $\widehat{x}_1$ respectively, which completes the proof because $f$ and $\widehat{f}$ are real-analytic.
\end{proof}

For Theorem \ref{eg8.4 equivalence solution 2} we applied \cite[Theorem 5.5]{AS04}, which gives local equivalence between sets of vector fields in the real analytic category. One could alternatively apply directly \cite[Theorem 4.1]{stern} on local equivalence of frames to characterize local equivalence in terms of a finite set of conditions rather than the infinite set in \eqref{eg8.4 eq condition 1}. For this, following \cite [chapter 7, \S 4]{stern}, given a nonnegative integer $s$ let $\mathcal F_s:=\{I_j\}_{j=1}^{s+1}$ be the family of nontrivial structure functions of our parallelism together with the covariant derivatives up to the order $s$ and 
$k_{s}(\varphi)=\dim \mathrm{span} \{d\, I_j(\varphi)\}_{j=1}^{s+1}$ be the rank of the family $\mathcal F_s$. From \eqref{second invariant} it is easy to see that $k_{s}(\varphi)$ is equal to the rank of the $(s+1)\times 3$ matrix 
\begin{equation}
    \label{matrix_for rank}
    \mathcal M_s(\varphi):=\begin{pmatrix} 
    f^{(5)}(x_1)& 2 f^{(4)} (x_1)& -12f^{(4)} (x_1)\\
    f^{(6)}(x_1)& 3 f^{(5)} (x_1)& -15f^{(5)} (x_1)\\
     f^{(7)}(x_1)& 4 f^{(6)} (x_1)& -16f^{(6)} (x_1)\\
     \vdots&\vdots&\vdots\\
     f^{(s+5)}(x_1)& (s+2) f^{(5)} (x_1)& -(3s+12)f^{(5)} (x_1)
    \end{pmatrix}.
\end{equation}
Recall that  the parallelism is called regular at $\varphi_0$ if there exists $s$ such that $k_{s+1}$ is constant in a neighborhood of $\varphi_0$ and $k_s(\varphi_0)=k_{s+1}(\varphi_0)$. The smallest such $s$, denoted by  $\mathfrak{o}$, is called the order of parallelism at $\varphi_0$ and in this case one says that  the parallelism has rank $k:=k_s(\varphi_0)$ at $\varphi_0$. Clearly, in the considered case, $k\leq 3$ and $k\leq \mathfrak{o}+1$. Also, since $k_s(\varphi)$ equals the rank of $\mathcal{M}_s(\varphi)$ in  \eqref{matrix_for rank}, it follows that the notions of regularity, the order, and  the rank of our parallelism can be related to a point on the original hypersurface.

By \cite[Chapter 7, Theorem 4.1]{stern}, if our parallelism has order $\mathfrak{0}$ and rank $k$ at $\varphi_0$, then there exist indices  $1\leq j_1<\ldots <j_k\leq \mathfrak{o}+1$ such that 
\begin{equation}
\label{derived_invariants}
I_j=G_j(I_{j_1},\ldots I_{j_k})
\end{equation}
for some smooth functions $G_j$ defined in a neighborhood of $(I_{j_1}(\varphi_0),\ldots I_{j_k}(\varphi_0))\in \mathbb R^k$.

\begin{theorem}[immediate corollary of {\cite[Chapter 7, Theorem 4.1]{stern}}]\label{derived_invariants dependence theorem}
A germ of another hypersurface $M_{\widehat{f}}$ at a point $\widehat{p}_0$ is equivalent to the germ of the original hypersurface $M_f$  at a point $p_0$, for which the canonical parallelism is regular  of order $\mathfrak o$  and rank $r$ at some (and therefore any)  $\varphi_0$ in the fiber of $p_0$, if and only if the canonical parallelism for the hypersurface $M_{\widehat{f}}$ is regular of order $\mathfrak o$  and rank $r$ at some (and therefore any) point  $\widehat{\varphi}_0$ in the fiber of $\widehat{p}_0$ and there exist $\varphi_0$ and $\widehat{\varphi}_0$ on the fibers of $p_0$ and $\widehat{p}_0$, respectively, such that the  functions $G_j$ and $\widehat G_j$  with $j\leq \mathfrak o +1$ corresponding to
the canonical parallelisms of $M_f$ and $M_{\widehat f}$, respectively, coincide.
\end{theorem}

Below we list examples that will be relevant for the analysis of locally homogeneous cases (in the proof of Theorem \ref{section 10 homogeneity classification}). In  these examples we  assume that $x_1^0$ is the $x_1$-component in the coordinate representation of $\varphi_0$:
\begin{enumerate}
\item If $f^{(4)} \equiv 0$ then the parallelism at $\varphi_0$ is regular  of  order $0$ and rank $0$;
\item  If $f^{(4)}(x_1^0)\neq 0$ but  $f^{(5)} \equiv 0$, then the parallelism is regular of  order $0$ and rank $1$
\item  If $f^{(4)}(x_1^0)\neq 0$, $f^{(5)} (x_1^0)\neq 0$, and $\det \mathcal{M}_2\equiv 0$, then the parallelism is of order $1$ and of rank $2$. Note that if  $f^{(5)} (x_1^0)\neq 0$ then the condition $\det M_2\equiv 0$ is equivalent to the condition that the invariant $J_{(1,3),(2,2)}=\cfrac{f^{(4)} f^{(6)}}{\bigl(f^{(5)}\bigr)^2}$
is constant  in a neighborhood of $\varphi_0$, because 
\[
\det M_2=6\left(f^{(5)}\right)^3\frac{d}{dx_1}\left(J_{(1,3),(2,2)}\right).
\]
\end{enumerate}

{\renewcommand{\arraystretch}{1.4}
\begin{table}
\begin{tabular}{|l|c|}
    \hline
    formula of $f$ for which $M_f$ is homogeneous & $M_f$ symmetry group dimension \\\hline\hline
    $f(x)=0$ & 14 \\\hline
    $f(x)=x^4$ & 13 \\\hline
    $f(x)=x^{a}$ with $x>0$ for some real number $a\not\in\{0,1,2,3,4\}$& 12\\\hline
    $f(x)=e^x$& 12 \\\hline
    $f(x)=\ln(x)$ with $x>0$& 12 \\\hline
    $f(x)= x \ln(x)-x$ with $x>0$& 12 \\\hline
    $f(x)=2 x^2\ln(x)-3x^2$ with $x>0$& 12 \\\hline
    $f(x)=6 x^3\ln(x)-11x^3$ with $x>0$& 12 \\\hline
\end{tabular}
\caption{Homogeneous structures of Theorem \ref{section 10 homogeneity classification}}\label{homogeneous structures table}
\end{table}
}

\begin{theorem}\label{section 10 homogeneity classification}
The hypersurface $M_{\widehat{f}}$ has a locally homogeneous structure on one of its open subsets $ U\subset M_{\widehat{f}}$ if and only if $u(x)=\widehat{f}^{(4)}(x)$ is a solution to a differential equation of the form $u\cdot u^{\prime\prime}=c(u^\prime)^2$ for some $c\in \mathbb{R}$, which in turn happens if and only if the structure on $U$ is locally equivalent to the structure on $M_f$ defined by one of the formulas Table \ref{homogeneous structures table}, also displaying the dimension of each structure's symmetry group.

Each formula in Table \ref{homogeneous structures table} defines a different homogeneous structure. In particular, we have a $1$-parametric family of non-equivalent homogeneous structures parameterized by $a$.
\end{theorem}

\begin{proof}
If $M_f$ is homogeneous, then by Remark \ref{invariants_on _M} either 
$f^{(5)}\equiv 0$ or 
\begin{equation}
\label{characterizing ODE}
J_{(1,3), (2,2)}=\cfrac{f^{(4)} f^{(6)}}{\bigl(f^{(5)}\bigr)^2}\equiv c
\end{equation}
for some constant $c\in \mathbb{R}$.

In the former case, by Theorem \ref{eg8.4 equivalence solution 2}, $M_f$ is equivalent to one of the models in the first two rows of Table \ref{homogeneous structures table}.
In the latter case a general solution to \eqref{characterizing ODE} has the form
\begin{align}\label{characterizing ODE solution}
    \widehat{f}^{(4)}(x)=
    \begin{cases}
    c_1\left((1-c)x+c_2\right)^{\tfrac{1}{1-c}} &\mbox{ if } c\neq 1\\
    c_1e^{c_2x}
    \end{cases},
\end{align}
and if $\widehat{f}$ satisfies \eqref{characterizing ODE solution} for some $(c_1,c_2,c)$ then, by Theorem \ref{eg8.4 equivalence solution 2}, $M_{\widehat{f}}$ is equivalent to $M_f$ for some $f$ given by one of the formulas in Table \ref{homogeneous structures table}, where the last four lines in the table corresponds to $c$ from the first line of \eqref{characterizing ODE solution} equal to $\tfrac{5}{4}$, $\tfrac{4}{3}$, $\tfrac{3}{2}$, and  $2$ respectively.

To prove the converse (i.e., $M_f$ is homogeneous if $f$ is given by one of the formulas in Table \ref{homogeneous structures table}),  we note that they correspond to one of the items (1)-(3) in the list of examples given above immediately preceding Theorem \ref{section 10 homogeneity classification}.
\begin{enumerate}
\item Item (1) corresponds to the first line of Table \ref{homogeneous structures table} and in this case all invariants $I_j$ are zero which implies that the model is homogeneous with a $14$-dimensional local group of symmetries, as $\dim  P^{0,\mathrm{red}}=14$.

\item Item (2) corresponds to the second line of  Table \ref{homogeneous structures table}. Since the order is $0$ and the rank is $1$, by \eqref{derived_invariants}
all invariants $I_j$ are functions of $I_1$, so they are constant on any level set of $I_1$. Since these level sets is project surjectively to $M_f$ we get the homogeneity, and since they have codimension 1 in $P^2$ we get the  conclusion on the dimension of the local symmetry group.

\item Item (3) corresponds to the remaining lines of  Table \ref{homogeneous structures table}. Since the order is $1$ and the rank is $2$ , by \eqref{derived_invariants},
all invariants $I_j$ are functions of $I_1$ and $I_2$, so they are constant on any common level set of $I_1$ and $I_2$. Since those common level sets  project surjectively to $M_f$ we get homogeneity and since they have codimension $2$ we get the  conclusion on the dimension of the local symmetry group.
\end{enumerate}
\end{proof}

\begin{remark}\label{eg8.4 equivalence solution 2 remark}
Theorem \ref{eg8.4 equivalence solution 2} shows that, despite having the same constant modified symbols, the moduli space of structures of the form in \eqref{C6 example family} is quite large, specifically parameterized by equivalence classes of real-analytic functions on $\R$ under the equivalence relation $f^{(4)}(x)\sim af^{(4)}(bx+c)$
(i.e., an infinite dimensional vector space modulo the action of a $7$-dimensional Lie group, where this group's $7$ generators come from the three parameters $a$, $b$, and $c$ in Theorem \ref{eg8.4 equivalence solution 2} and a $4$-dimensional group that normalizes the third jet of $f$).

This paper's main results apply in the category of constant symbol structures. One may wonder if there is sufficient variety among geometries in this category to warrant treating its local equivalence problems, and this example shows that there is indeed such rich variety. 
Theorem \ref{section 10 homogeneity classification} shows that infinitely many locally non-equivalent structures of the form in \eqref{C6 example family} are locally homogeneous, so even when restricting further to considering homogeneous structures with given symbols (or even modified symbols) one encounters an appreciable local equivalence problem.
\end{remark}

\begin{appendix}
\section{Non-constant symbol structures}\label{Non-constant symbol structures}

Starting from dimension $7$ the moduli space of CR symbols is not discrete. Hence $2$-nondegenerate hypersurface-type CR manifolds with non-constant CR symbol are generic. The absolute parallelisms of Theorems \ref{maintheorm} and \ref{maintheorconst} are, however, constructed for structures whose CR symbol is constant. There is a natural development of these constructions to build absolute parallelisms for generic structures with non-constant symbol, which we outline in this appendix. 
A notable application of this generalized construction is that it yields parallelisms whose infinitesimal symmetries contain the  CR manifold's infinitesimal symmetries, and thus the construction yields upper bounds on the dimension of CR manifold's automorphism group. These upper bounds can moreover be calculated through a completely algorithmic Tanaka prolongation procedure.

For the non-constant symbol setting, we will generalize the definition of $P^0$ appropriately, after which an absolute parallelism construction follows via the exact same procedure developed above for the constant symbol setting.

Let $\mathfrak{g}^0$ be a CR symbol with a decomposition as in \eqref{CRsymbol_2}. The group $\mathrm{Aut}(\mathfrak{g}_{-})\cong CSp(\mathfrak{g}_{-1})$ of automorphisms of $\mathfrak{g}_{-}$ that preserve the decomposition $\mathfrak{g}_{-}=\mathfrak{g}_{-2}\oplus \mathfrak{g}_{-1}$ has a natural action on $\mathfrak{g}_{-}\rtimes \mathfrak{der}(\mathfrak{g}_{-})$ given by
\[
g.(v,w)=\left(g(v),gwg^{-1}\right)\quad\quad\forall\, g\in \mathrm{Aut}(\mathfrak{g}_{-}),\,(v,w)\in\mathfrak{g}_{-2}\oplus \mathfrak{g}_{-1}.
\]
Define $S(\mathfrak{g}^0)$ to be the orbit of the symbol $\mathfrak{g}^0$ in the appropriate Grassmannian of $\mathfrak{g}_{-}\rtimes \mathfrak{der}(\mathfrak{g}_{-})$ under the action of $\mathrm{Aut}(\mathfrak{g}_{-})$, that is,
\[
S(\mathfrak{g}^0):=\left\{V\subset \mathfrak{g}_{-}\rtimes \mathfrak{der}(\mathfrak{g}_{-})\,\left|\,V=g.\mathfrak{g}^0\mbox{ for some }g\in \mathrm{Aut}(\mathfrak{g}_{-})\right.\right\},
\]
so $S(\mathfrak{g}^0)$ is a natural equivalence class of abstract CR symbols represented by $\mathfrak{g}^0$. Each element $g.\mathfrak{g}^0$ in $S(\mathfrak{g}^0)$ has a decomposition as in \eqref{CRsymbol_2} whose $(i,j)$ component is given by applying $g$ to the $(i,j)$ component of $\mathfrak{g}^0$.

Recall that our definition of $P^0$ for a structure with constant symbol involves fixing some representative abstract symbol $\mathfrak{g}^0$ that is equivalent to $\mathfrak{g}^0(p)$ for all $p\in M$. In other words we chose an element in $S\big(\mathfrak{g}^0(p)\big)$ from which we define $P^0$ and build the subsequent absolute parallelism. Choosing a different element in $S\big(\mathfrak{g}^0(p)\big)$ would yield a formally different absolute parallelism,  but all parallelisms obtained in this way are identified under the action of $\mathrm{Aut}(\mathfrak{g}_{-})\cong CSp(\mathfrak{g}_{-1})$. Once the choice is made one can use this same choice to every structure with given constant symbol to build the canonical parallelism and to some the equivalence problem. 

This perspective suggests a natural generalization of $P^0$ for non-constant symbol structures. Let $\pi_S:S\to  M$ be the fiber bundle over  $M$ whose fiber $S_p$ over a point $p\in M$ is
\[
S_p:=S\big(\mathfrak{g}^0(p)\big)\quad\quad\forall\, p\in  M,
\]
and let $s$ be a smooth section of $S$. Let $\sigma_{i,j}(p)$ (respectively, $\sigma_{-}(p)$) denote the $(i,j)$ component (respectively, negatively graded part) of abstract CR symbol $\sigma(p)$ for all $p\in M$. Each $\sigma_{-}(p)$ is isomorphic to a Heisenberg algebra $\mathfrak{g}_{-}$ of appropriate dimension, so let us fix the identifications
\[
\mathfrak{g}_{-}=\sigma_{-}(p)\quad\quad \forall p\in M.
\]

Given such a section $\sigma : M\to S$, we will say that a CR structure is \emph{of type $\sigma$} if for every point $p\in M$ the CR  symbol of the structure at $p$ is isomorphic to $\sigma(p)$. From this point of view, the case of structures with constant symbol corresponds to constant sections $\sigma$. Obviously, if a CR structures of type $\sigma$ is equivalent to another CR structure via a diffeomprphism $\Phi$, then the latter CR structure is of type $\sigma \circ \Phi$. In many cases already this necessary condition give very strong restrictions on $\Phi$ and may prove that such $\Phi$ does not exists.
However, if one assumes that for two CR structures of type $\sigma$ and $\widetilde \sigma$ on $M$ there exists a diffeomorphism $\Phi$ of $M$ such that $\widetilde \sigma=\sigma\circ \Phi$, then the construction of the canonical absolute parallelism for CR structures of given type $\sigma$ sketched below will in principle give necessary and sufficient conditions for this $\Phi$to be an isomorphism between these structures.

Define $\mathrm{pr}:P^0_\sigma\to  M$ to be the fiber bundle whose fiber $\mathrm{pr}^{-1}(p)$ over a point $p\in  M$ is defined by 
\begin{align}
\mathrm{pr}^{-1}(p)=\left\{
\varphi:\mathfrak{g}_{-}=\sigma_{-}(p)\to \mathfrak g_-(p) \left|\ 
\parbox{8.6cm}{
$\varphi(\mathfrak \sigma_{i,j}(p))=\mathfrak g_{i,j}(p)\quad\forall\, (i,j)\in \{(-1, \pm 1),(-2, 0)\}$,\\
$\varphi^{-1}\circ \mathfrak g_{0,\pm2}(p)\circ\varphi=\sigma_{0,\pm 2}(p)$, and \\
$\varphi([ y_1, y_2])=[\varphi( y_1),\varphi( y_2)] \quad\forall\, y_1, y_2\in\sigma_{-}(p)$
}
\right.\right\},
\end{align}
and suppose $\mathrm{pr}^{-1}(p)$ has constant dimension, which is not a very restrictive assumption because it will automatically have constant dimension near generic points in $M$.

By using $P^0_\sigma$ in place of $P^0$, the definitions of $\Re(P^0_\sigma)$ and of a modified symbol $\mathfrak{g}^{0,\mathrm{mod}}(\psi)$ associated with each point in $\psi\in P^0_\sigma$ remain well defined, but we should make the minor adjustment of defining $\mathfrak{g}^{0,\mathrm{mod}}(\psi):=\mathrm{span}_{\mathbb{C}}\left(\mathrm{Im}\left.\theta_0\right|_{T_\psi \left(P^0_\sigma\right)_{\Pi(\psi)}}\right)$ where this additional complex span is needed because, for simplicity in this section, we only defined $P^0_\sigma$ as a bundle over $M$ rather than $\mathbb{C} M$. The universal Tanaka prolongation of $\mathfrak{g}^{0,\mathrm{mod}}(\psi)$ also remains well defined, and for a regular point $\psi\in P^0_\sigma$, as defined in Definition \ref{regular points in Pzero}, an absolute parallelism on a fiber bundle over a neighborhood of $\psi$ in $P^0$ can be obtained using exactly the procedure outlined in Section \ref{Absolute parallelisms}. By construction, the (local) symmetry group of $M$ acts (locally) faithfully on the total space $\Re(P^0_\sigma)$ of this fiber bundle by symmetries of the parallelism. The dimension of this total space matches the dimension of the universal Tanaka prolongation of $\mathfrak{g}^{0,\mathrm{mod}}(\psi)$, which is consequently an upper bound for the dimension of CR manifold's algebra of infinitesimal symmetries.

\end{appendix}

\end{document}